\renewcommand{\geq}{\ensuremath{\geqslant}}
\renewcommand{\leq}{\ensuremath{\leqslant}}
\newcommand{\N}{\ensuremath{\mathbb{N}}}
\newcommand{\Z}{\ensuremath{\mathbb{Z}}}
\newcommand{\R}{\ensuremath{\mathbb{R}}}
\newcommand{\1}{\ensuremath{\mathbf{1}}}
\renewcommand{\L}{\ensuremath{\mathbb{L}}}
\newcommand{\E}{\ensuremath{\mathbb{E}}}
\renewcommand{\P}{\ensuremath{\mathbb{P}}}
\def\var{\mathrm{Var}}
\newcommand{\f}{\ensuremath{\varphi_{\lambda}}}
\numberwithin{equation}{section}
\theoremstyle{plain}  % pour les théorèmes, lemmes, corollaires, conjectures
\newtheorem{thm}{Theorem}
\newtheorem{cor}{Corollary}[section]
\newtheorem{lem}{Lemma}[section]
\theoremstyle{definition}  % pour les définitions, exemples, problèmes
\theoremstyle{remark}  % pour les remarques, notes, conclusions
\title{\textbf{Wavelet thresholding estimation in a Poissonian interactions model with application to genomic data}}
\author{Laure \textsc{Sansonnet}
        \medskip \\
        \emph{Laboratoire de Mathématiques, CNRS UMR 8628} \\
        \emph{Université Paris-Sud 11} \\
        \emph{15 rue Georges Clémenceau, 91405 Orsay Cedex, France}}
\date{2011}
\begin{document}

\maketitle

\begin{center}
\begin{minipage}{0.75\textwidth}
\noindent \textbf{Abstract:} This paper deals with the study of dependencies between two given events modeled by point processes. In particular, we focus on the context of DNA to detect favored or avoided distances between two given motifs along a genome suggesting possible interactions at a molecular level.
For this, we naturally introduce a so-called reproduction function $h$ that allows to quantify the favored positions of the motifs and which is considered as the intensity of a Poisson process.
Our first interest is the estimation of this function $h$ assumed to be well localized.
The estimator $\tilde{h}$ based on random thresholds achieves an oracle inequality.
Then, minimax properties of $\tilde{h}$ on Besov balls $\mathcal{B}^{s}_{2,\infty}(R)$ are established.
Some simulations are provided, allowing the calibration of tuning parameters from a numerical point of view and proving the good practical behavior of our procedure.
Finally, our method is applied to the analysis of the influence between gene occurrences along the \emph{E.\,coli} genome and occurrences of a motif known to be part of the major promoter sites for this bacterium.

\medskip

\noindent \textbf{Keywords:} Adaptive estimation, interactions model, oracle inequalities, Poisson process, thresholding rule, $U$-statistics, wavelets.
% Estimation adaptative, inégalité oracle, modèle d'interactions, ondelettes, processus de Poisson, seuillage, $U$-statistiques

\medskip

\noindent \textbf{MSC2010:} Primary 60G55, 62G05; secondary 62G20, 62G30.
% 2010 Mathematics Subject Classification
%% 60-XX Probability theory and stochastic processes
%%%% 60Gxx Stochastic processes
%%%%%%%% 60G55 Point processes
%% 62-XX Statistics
%%%% 62Gxx Nonparametric inference
%%%%%%%% 62G05 Estimation
%%%%%%%% 62G20 Asymptotic properties
%%%%%%%% 62G30 Order statistics; empirical distribution functions
\end{minipage}
\end{center}

%%%%%%%%%%%%%%%%%%%%%%%%%%%%%%%%%%%%%%%
%%%%%%%%%%  1  Introduction  %%%%%%%%%%
%%%%%%%%%%%%%%%%%%%%%%%%%%%%%%%%%%%%%%%
\section{Introduction}

The goal of the present paper is to study the dependence between two given events modeled by point processes. We propose a general statistical approach to analyze any type of interaction, for instance, interactions between neurons in neurosciences or the comprehension of bankruptcies by contagion in economics. In particular, we focus on a model to study favored or avoided distances between patterns on a strand of DNA, which is an important task in genomics.

We are first interested in the modeling of the influence between two given motifs, a motif being defined as a sequence of letters in the alphabet \verb"{a,c,g,t}". This alphabet represents the four nucleotides bases of DNA: adenine, cytosine, guanine and thymine. Our aim is to model the dependence between motifs in order to identify favored or avoided distances between them, suggesting possible interactions at a molecular level. Because genomes are long (some 1 million bases) and motifs of interest are short (3 up to 20 bases), motif occurrences can be viewed as points along genomes. For convenience, we work in a continuous framework and then, the occurrences of a motif along a genome are modeled by a point process lying in the interval $[0;T]$, where $T$ is the normalized length of the studied genome and will drive the asymptotic. We add that our model focuses on only one direction of interactions, that is to say we investigate the way a first given motif influences a second one. To study the influence of the second motif on the first one, we just invert their roles in the model.

We observe the occurrences of both given motifs (we presuppose interactions between them) and we assume that their distributions are as follows.
The locations of the first motif are modeled by a $n$-sample of uniform random variables on $[0;T]$, denoted $U_{1},\ldots,U_{n}$ and named parents. As the parameter $T$, the number $n$ of parents will also drive the asymptotic.
Then, each $U_{i}$ gives birth independently to a Poisson process $N^i$ with intensity the function $t \longmapsto h(t-U_i)$ with respect to the Lebesgue measure on $\R$ (for instance, see \cite{Kin}), which models the locations of the second motif. We consequently observe the aggregated process
\begin{equation}
N = \sum_{i=1}^{n} N^i \quad \mbox{with intensity the function} \quad t \longmapsto \sum_{i=1}^{n} h(t-U_i)  \label{def_aggrproc}
\end{equation}
and the points of the process $N$ are named children.
But in this model, for any child we do not observe which parent gives birth to him.
The unknown function $h$ is so-called reproduction function. Our goal is then to estimate $h$ with the observations of the $U_i$'s and realizations of $N$.

Such a modeling of locations of the first motif is linked to the work on the distribution of words in DNA sequences of Schbath and coauthors (for instance, see \cite{Sch}, \cite{ReS} and \cite{RRS}). Indeed, the first motif of interest is a rare word and is modeled by a homogeneous Poisson process $N^0$ on $[0;T]$. Thus, conditionally to the event "the number of points falling into $[0;T]$ is $n$", the points of the process $N^0$ (i.e.\,the parents) obey the same law as a $n$-sample of uniform random variables on $[0;T]$. Moreover, with very high probability, $n$ is proportional to $T$ and this constitutes the asymptotic considered in genomics, to which we will refer as the "DNA case".
With our model (considering a uniform law on the parents), we can also take into consideration the cases $n \ll T$ (parents are far away with respect to each other and one can almost identify which points are the children of a given parent) and $n \gg T$ (parents are too close to each other, which leads to hard statistical problems).

If $n=1$, the purpose is to estimate the intensity of only one Poisson process. Many adaptive methods have been proposed to deal with Poisson intensity estimation.
For instance, Rudemo \cite{Rud} studied data-driven histogram and kernel estimates based on the cross-validation method.
Donoho \cite{Don} fitted the universal thresholding procedure proposed by Donoho and Johnstone \cite{DJ} by using the Anscombe's transform.
Kolaczyk \cite{Kol} refined this idea by investigating the tails of the distribution of the noisy wavelet coefficients of the intensity.
By using model selection, other optimal estimators have been proposed by Reynaud-Bouret \cite{Rey2} or Willett and Nowak \cite{WN}.
Reynaud-Bouret and Rivoirard \cite{RBR} proposed a data-driven thresholding procedure that is near optimal under oracle and minimax points of view, with as few support assumptions as possible (the support of the intensity $h$ may be unknown or not finite), unlike previous methods that need to assume that the intensity has a known bounded support.

We notice that the reproduction function $h$ can be also viewed as the intensity of a Cox process (for instance, see \cite{DVJ1}) where the covariates are the parents $U_1,\ldots,U_n$. Comte \emph{et al.}\,\cite{CGG} proposed an original estimator of the conditional intensity of a Cox process (more generally, a marker-dependent counting process). Using model selection methods, they prove that their estimator satisfies an oracle inequality and has minimax properties.
Note that we consider here point processes on the real line. Some aspects of similar spatial processes are studied in a parametric way \cite{Ras}, for instance.

Some work has been done to study the statistical dependence between motif occurrences.
For instance, in Gusto and Schbath's article \cite{GS}, the framework consists in modeling the occurrences of two motifs by a Hawkes process (see \cite{Haw}): our framework can be viewed when the support of $h$ is in $\R_+$ as a very particular case of theirs. Their method, called FADO, uses maximum likelihood estimates of the coefficients of $h$ on a Spline basis coupled with an AIC criterion. However, even if the FADO procedure is quite effective and can manage interactions between two types of events, spontaneous apparition (a child can be an orphan) and self-excitation (a child can give birth to another child), there are several drawbacks. In fact, this procedure is a parametric estimation method coupled with a classical AIC criterion which behaves poorly for complex families of models. Moreover, FADO involves sparsity issues. Indeed, our feeling is that if interaction exists, say around the distance $d$ bases, the function $h$ to estimate should take large values around $d$ and if there is no biological reason for any other interaction, then $h$ should be null anywhere else. However, if the FADO estimate takes small values in this last situation, it does not vanish, which can result in misleading biological interpretations (see \cite{RBS}). Finally, in this model, the occurrences of the first motif only depend on the past occurrences of first and second motifs.
Reynaud-Bouret and Schbath \cite{RBS} have proposed an alternative based on model selection principle for Hawkes processes that solves the sparsity problem. Their estimate satisfies an oracle inequality and has adaptive minimax properties with respect to certain classes of functions. But it manages only one motif whereas it is necessary to treat interaction with another type of events and the method has a high computational cost even for a small number of models.
Note that Hawkes processes have a long story of parametric inference (see \cite{Oza}, \cite{OA} and \cite{DVJ1}).
In particular, for genomic data, Carstensen \emph{et al.}\,\cite{CSWH} recently deal with multivariate Hawkes process models in order to model the joint occurrences of multiple transcriptional regulatory elements (TREs) along the genome that are capable of providing new insights into dependencies among elements involved in transcriptional regulation.

In this paper, the proposed model is simple. Each child comes from one parent (no orphan and no child who is a parent), that is to say we do not take into account the phenomenons of spontaneous apparition and self-excitation, contrary to Hawkes process models. But it brings novelties.
To estimate the reproduction function $h$, we propose a nonparametric method, using a wavelet thresholding rule that will compensate sparsity issues of the FADO method.
Furthermore, our model treats interaction between two types of events, with a possible influence of the past occurrences but also future occurrences.
Then, there is the presence of a double asymptotic: the normalized length of the studied genome $T$ and the number $n$ of parents, which is not usual.
In the biological context, it is not acceptable assuming to know each child's parent. Our model, via the reproduction function $h$, allows to quantify the favored locations of children in relation to their parent, even if one cannot attribute a child to a parent before the statistical inference.
First we provide in this paper theoretical results and we derive oracle inequalities and minimax rates showing that our method achieves good theoretical performances. The proofs of these results are essentially based on concentration inequalities and on exponential and moment inequalities for $U$-statistics (see \cite{dPG}, \cite{GLZ} and \cite{HRB}).
Secondly some simulations are carried out to validate our procedure and an application on real data (\emph{Escherichia coli} genome) is proposed. The procedure provides satisfying reconstructions, overcomes the problems raised by the FADO method and agrees with the knowledge of the considered biological mechanism. For these numerical aspects, we have used a low computational complexity cascade algorithm.

In Section 2, we define the notations and we describe the method. Then Section 2 discusses the properties of our procedure for the oracle and minimax approaches.
Section 3 is devoted to the implementation of our method and provides simulations. The cascade algorithm is presented in Section 3.1.
Section 4 presents the application on the complete \emph{Escherichia coli} genome.
A more technical result that is at the origin of the one stated in Section 2.3 and proofs can be found in Section 6 (Appendix).

%%%%%%%%%%%%%%%%%%%%%%%%%%%%%%%%%%%%%%%%%%
%%%%%%%%%%  2  General results  %%%%%%%%%%
%%%%%%%%%%%%%%%%%%%%%%%%%%%%%%%%%%%%%%%%%%
\section{General results}

%%%%%%%%%%  2.1  Notations  %%%%%%%%%%
\subsection{Notations}

To estimate the reproduction function, we assume that $h$ belongs to $\L_1(\R)$ and  $\L_\infty(\R)$. Consequently, we can consider the decomposition of $h$ on a particular biorthogonal wavelet basis, built by Cohen \emph{et al.}\,\cite{CDF}, that we can describe as follows. We set $\phi=\1_{[0,1]}$ the analysis father wavelet. For any $r>0$, there exist three functions $\psi$, $\tilde\phi$ and $\tilde\psi$ with the following properties:
\begin{itemize}
  \item $\tilde\phi$ and $\tilde\psi$ are compactly supported,
  \item $\tilde\phi$ and $\tilde\psi$ belong to $C^{r+1}$, where $C^{r+1}$ denotes the Hölder space of order $r+1$,
  \item $\psi$ is compactly supported and is a piecewise constant function,
  \item $\psi$ is orthogonal to polynomials of degree no larger than $r$,
  \item $\left\{(\phi_k,\psi_{j,k})_{j\geq 0,k\in\Z},(\tilde\phi_k,\tilde\psi_{j,k})_{j\geq 0,k\in\Z}\right\}$ is a biorthogonal family:
      for any $j,j'\geq 0$, for any $k,k'\in\Z$,
      \[\int_\R \phi_k(x) \tilde\psi_{j',k'}(x) \,dx = \int_\R \psi_{j,k}(x) \tilde\phi_{k'}(x) \,dx = 0,\]
      \[\int_\R \phi_k(x) \tilde\phi_{k'}(x) \,dx = \1_{\{k=k'\}}, \quad \int_\R \psi_{j,k}(x) \tilde\psi_{j',k'}(x) \,dx = \1_{\{j=j',k=k'\}},\]
      where for any $x\in\R$,
      \[\phi_{k}(x) = \phi(x-k), \quad \psi_{j,k}(x) = 2^{j/2} \psi(2^jx-k)\]
      and
      \[\tilde\phi_{k}(x) = \tilde\phi(x-k), \quad \tilde\psi_{j,k}(x) = 2^{j/2} \tilde\psi(2^jx-k).\]
\end{itemize}
On the one hand, decomposition wavelets $\phi_k$ and $\psi_{j,k}$ are piecewise constant functions and, on the other hand, reconstruction wavelets $\tilde\phi_k$ and $\tilde\psi_{j,k}$ are smooth functions.
This implies the following wavelet decomposition of $h \in \L_2(\R)$:
\begin{equation}
h = \sum_{k\in\Z} \alpha_k \tilde\phi_k + \sum_{j\geq 0} \sum_{k\in\Z} \beta_{j,k} \tilde\psi_{j,k},  \label{dec0}
\end{equation}
where for any  $j\geq 0$ and any $k\in\Z$,
\[\alpha_k = \int_\R  h(x) \phi_k(x) \,dx, \quad \beta_{j,k} = \int_\R h(x) \psi_{j,k}(x) \,dx.\]
The Haar basis, used in practice, can be viewed as a particular biorthogonal wavelet basis, by setting $\tilde\phi=\phi$ and
$\tilde\psi=\psi=\1_{]\frac12;1]}-\1_{[0;\frac12]}$, with $r=0$ (even if the second property is not satisfied with such a choice). The Haar basis is an orthonormal basis, which is not true for general biorthogonal wavelet bases.
This kind of decomposition has already been used in thresholding methods by Juditsky and Lambert-Lacroix \cite{JLL}, Reynaud-Bouret and Rivoirard \cite{RBR}, and Reynaud-Bouret \emph{et al.}\,\cite{RBRTM}.

To shorten mathematical expressions, we set
\[\Lambda = \{\lambda = (j,k) : j \geq -1, k \in \Z\},\]
and for any $\lambda\in\Lambda$,
\[\f = \left\{\begin{array}{cl} \phi_k & \mbox{if $\lambda = (-1,k)$} \\ \psi_{j,k} & \mbox{if $\lambda = (j,k)$ with $j \geq 0$} \end{array}\right., \quad \tilde\varphi_{\lambda} = \left\{\begin{array}{cl} \tilde\phi_k & \mbox{if $\lambda = (-1,k)$} \\ \tilde\psi_{j,k} & \mbox{if $\lambda = (j,k)$ with $j \geq 0$} \end{array}\right.\]
and similarly
\[\beta_{\lambda} = \left\{\begin{array}{cl} \alpha_k & \mbox{if $\lambda = (-1,k)$} \\ \beta_{j,k} & \mbox{if $\lambda = (j,k)$ with $j \geq 0$} \end{array}\right..\]
Then (\ref{dec0}) can be rewritten as
\begin{equation}
h = \sum_{\lambda\in\Lambda} \beta_{\lambda} \tilde\varphi_{\lambda} \quad \mbox{with} \quad \beta_{\lambda} = \int_\R h(x)\f(x) \,dx  \label{dec}
\end{equation}
and now, we have to estimate these wavelet coefficients.

For all $\lambda$ in $\Lambda$, we define $\hat{\beta}_{\lambda}$ an estimator of $\beta_{\lambda}$ as
\begin{equation}
\hat{\beta}_{\lambda} = \frac{G(\f)}{n}, \quad \mbox{with} \quad G(\f) = \int_\R \sum_{i=1}^{n} \left[\f(t-U_i) - \frac{n-1}{n} \E_{\pi}(\f(t-U))\right] \,dN_{t},  \label{def_est}
\end{equation}
where $\pi$ is the uniform distribution on $[0;T]$ and $\E_{\pi}(\f(t-U))$ denotes the expectation of $\f(t-U)$ where $U \sim \pi$ (an independent copy of $U_1,\ldots,U_n$).
If $n=1$, we obtain the natural estimators of the $\beta_{\lambda}$'s in the case of only one Poisson process on the real line (see \cite{RBR}).

\begin{lem}  \label{lemEST}
For all $\lambda = (j,k)$ in $\Lambda$,
\[\E(G(\f)) = n \int_\R \f(x)h(x) \,dx,\]
i.e.\,$\hat{\beta}_{\lambda}$ is an unbiased estimator for $\beta_{\lambda}$.
Furthermore, its variance is upper bounded as follows:
\[\var(\hat{\beta}_{\lambda}) \leq C \left\{\frac{1}{n} + \frac{1}{T} + \frac{2^{-j}n}{T^2}\right\}\]
and
\[\sup_{\lambda\in\Lambda} \var(\hat{\beta}_{\lambda}) \leq C' \left\{\frac{1}{n} + \frac{n}{T^2}\right\},\]
where $C$ and $C'$ depend on $\|h\|_1$, $\|h\|_{\infty}$, $\|\psi\|_1$ and $\|\psi\|_2$.
\end{lem}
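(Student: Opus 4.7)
The plan is to condition on $\mathcal{U}=(U_{1},\ldots,U_{n})$ and exploit the fact that, given $\mathcal{U}$, $N$ is Poisson with intensity $t\mapsto\sum_{j}h(t-U_{j})$, so that Campbell's formulas give us closed-form expressions for $\mathbb{E}(\int g\,dN\mid\mathcal{U})$ and $\var(\int g\,dN\mid\mathcal{U})$.

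\textbf{Unbiasedness.} Applying Campbell's formula conditionally gives
\[
\E(G(\f)\mid\mathcal{U})=\sum_{i,j}\int_{\R}\f(t-U_{i})h(t-U_{j})\,dt-(n-1)\sum_{j}\int_{\R}\E_{\pi}(\f(t-U))h(t-U_{j})\,dt.
\]
I would split the double sum according to $i=j$ versus $i\neq j$. The diagonal part equals $n\beta_{\lambda}$ by translation-invariance of Lebesgue measure. Taking the expectation over $\mathcal{U}$ and using that, for $i\neq j$, $\E_{U_{i}}[\f(t-U_{i})]=\E_{\pi}(\f(t-U))$, the $n(n-1)$ off-diagonal terms are exactly balanced by the correction $(n-1)\sum_{j}(\cdot)$, which is the whole point of the $(n-1)/n$ factor. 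Thus $\E\hat\beta_{\lambda}=\beta_{\lambda}$.

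\textbf{Variance.} I would use the law of total variance
\[
n^{2}\var(\hat\beta_{\lambda})=\E\!\left[\var(G(\f)\mid\mathcal{U})\right]+\var\!\left(\E(G(\f)\mid\mathcal{U})\right)
\]
and treat the two pieces separately. For the conditional variance, Campbell's formula yields
\[
\var(G(\f)\mid\mathcal{U})=\int_{\R}\Bigl(\sum_{i}\f(t-U_{i})-(n-1)\E_{\pi}(\f(t-U))\Bigr)^{\!2}\sum_{j}h(t-U_{j})\,dt.
\]
Expanding the square produces a triple sum indexed by $(i,i',j)$ that I would regroup according to the coincidence pattern. The fully diagonal $i=i'=j$ contribution is $n\int\f^{2}h\leq n\|\psi\|_{2}^{2}\|h\|_{\infty}$, producing the $1/n$ term after division by $n^{2}$. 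Each class containing one or two free independent indices brings an averaging factor $1/T$ via $\E_{\pi}[\f(t-U)]\leq\|\f\|_{1}/T$ and $\E_{\pi}[h(t-U)]\leq\|h\|_{1}/T$, which, using $\|\f\|_{1}\leq 2^{-j/2}\|\psi\|_{1}$, yields the $1/T$ and $2^{-j}n/T^{2}$ contributions. For the variance of the conditional mean, Step~1 shows
\[
\E(G(\f)\mid\mathcal{U})-n\beta_{\lambda}=\sum_{i\neq j}\!\bigl(B_{ij}-C_{j}\bigr),\quad B_{ij}=\int\!\f(t-U_{i})h(t-U_{j})dt,\ C_{j}=\int\!\E_{\pi}(\f(t-U))h(t-U_{j})dt,
\]
and the centered statistic $\widetilde B_{ij}=B_{ij}-C_{j}$ satisfies $\E[\widetilde B_{ij}\mid U_{j}]=0$. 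Expanding the square $(\sum_{i\neq j}\widetilde B_{ij})^{2}$ and using independence, every 4-tuple $(i,j,k,l)$ in which an index appears exactly once dies (by the degeneracy in the $j$-variable or by the definition of $C_{j}$); the only surviving configurations are $(k,l)=(i,j)$, $(k,l)=(j,i)$, and $k=i,\ l\neq j$, and each is bounded by straightforward applications of Cauchy--Schwarz together with $\|\f\|_{1}^{2}\leq 2^{-j}\|\psi\|_{1}^{2}$ and $\|h\|_{1},\|h\|_{\infty}<\infty$.

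\textbf{Uniform bound.} The second inequality is obtained from the first by absorbing $2^{-j}\leq 2$ into the constant and by the AM-GM inequality $1/T\leq\tfrac12(1/n+n/T^{2})$.

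\textbf{Main obstacle.} The delicate step is the second variance piece: $\sum_{i\neq j}\widetilde B_{ij}$ is a degenerate $U$-statistic only in the second argument, so one cannot quote a one-line degenerate-$U$-statistic variance bound. The careful bookkeeping of all 4-index coincidence patterns, and in particular the identification of the surviving class $k=i,\ l\neq j$ that produces the $n\cdot 2^{-j}/T^{2}$ scaling, is where the calculation needs the most attention.
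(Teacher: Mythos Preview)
Your plan is essentially the paper's own: condition on $\mathcal U$, use Campbell for the Poisson moments, and apply the law of total variance, splitting $\var(G)$ into $\E[\var(G\mid\mathcal U)]$ and $\var(\E(G\mid\mathcal U))$. Two small points of divergence are worth noting.

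First, your attribution of the terms is slightly off. After taking expectation, the conditional variance $\E[\var(G\mid\mathcal U)]$ collapses to just two contributions, $n\int\f^{2}h$ and $n(n-1)\int\var_{\pi}(\f(t-U))\,\E_{\pi}(h(t-U))\,dt$; the cross terms with $i\neq i'$ vanish because $\E_{\pi}[\f(t-U_{i})-\E_{\pi}\f]=0$. So this piece yields only the $1/n$ and $1/T$ rates. The $2^{-j}n/T^{2}$ rate comes entirely from the second piece $\var(\E(G\mid\mathcal U))$, specifically from your surviving class $k=i,\ l\neq j$ with $\sim n^{3}$ terms. Your final bound is unaffected; only the bookkeeping needs adjusting.

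Second, for $\var(\E(G\mid\mathcal U))$ the paper organizes the same computation via the Hoeffding decomposition: writing $W(\f)=\E(G\mid\mathcal U)-n\beta_{\lambda}$ as $W_{1}+W_{2}$, where $W_{1}=(n-1)\sum_{i}\int[\f(t-U_{i})-\E_{\pi}\f]\,\E_{\pi}(h(t-U))\,dt$ is a sum of i.i.d.\ centered variables and $W_{2}$ is a genuinely degenerate order-$2$ $U$-statistic in both arguments. Then $\E W^{2}=\E W_{1}^{2}+\E W_{2}^{2}$ by orthogonality, and each piece is bounded in one line. This is equivalent to your direct 4-index coincidence analysis (your $k=i,\ l\neq j$ class is exactly $\E W_{1}^{2}$), but it sidesteps the awkwardness you flag as the ``main obstacle'': once $W_{2}$ is fully degenerate, the surviving-configuration bookkeeping becomes trivial. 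Your statement that ``every 4-tuple in which an index appears exactly once dies'' is literally false (in your class $k=i,\ l\neq j$ both $j$ and $l$ appear once); what is true is that terms die when $i$ or $k$ is isolated, and the Hoeffding split makes this automatic.
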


The behavior of the variance of the $\hat{\beta}_{\lambda}$'s is not usual, because two parameters $n$ and $T$ are involved.
Nevertheless, when $n$ is proportional to $T$ ("DNA case" as explained in Introduction), the variance is bounded by $1/T$ up to a constant, as for the Hawkes process (see \cite{RBS}).
When $n \ll T$, the variance is bounded by $1/n$ up to a constant, which means that the apparition's distance between two parents is large enough to make their interactions insignificant for the statistical analysis. So in this case, our framework can be viewed as the observation of a $n$-sample of a Poisson process with common intensity $h$ (see \cite{RBR}).
Finally, when $n \gg T$, the variance deteriorates and is only bounded by $n/T^2$ up to a constant, and in this case, the small apparition's distance between two parents leads to rough statistical issues hard to overcome.

%%%%%%%%%%  2.2  Description of our method  %%%%%%%%%%
\subsection{Description of our method}

We start assuming that $h$ is compactly supported in $[-A;A]$, with $A$ a positive real number. This quantity $A$ can denote the maximal memory along DNA sequences (this is chosen by the biologists (see \cite{GS}), depending on the underlying biological process they have in mind).
Furthermore, the properties of the biorthogonal wavelet bases introduced previously allow us to assume that we know a positive real number $M$ such that the support of $\psi$ is contained in $[-M;M]$.

First, we introduce the following deterministic subset $\Gamma$ of $\Lambda$
\[\Gamma = \big\{\lambda = (j,k) \in \Lambda : -1 \leq j \leq j_0, k \in \mathcal{K}_j\big\},\]
where $j_0$ a positive integer that will be fixed later and at each resolution level $j$, we denote $\mathcal{K}_j$ the set of integers such that the intersection of the supports of $\f$ and $h$ is not empty, with $\lambda=(j,k)$.
Straightforward computations lead to a cardinal of $\Gamma$ of order $2^{j_0}$.

Then, given some parameter $\gamma > 0$, we define for any $\lambda\in\Gamma$, the threshold
\begin{equation}
\eta_\lambda(\gamma,\Delta) = \sqrt{2\gamma j_0 \widetilde{V}\left(\frac{\f}{n}\right)} + \frac{\gamma j_0}{3} B\left(\frac{\f}{n}\right) + \Delta \frac{N_\R}{n}  \label{def_threshold}
\end{equation}
where $\Delta$ is a positive quantity and $N_{\R}$ is the number of points of the aggregated process $N$ lying in $\R$.
For theoretical results, $\Delta$ will be taken of order $\frac{j_0^2 2^{j_0/2}}{n} + \frac{j_0}{\sqrt{T}} + \frac{\sqrt{j_0n}}{T}$ times a constant depending on $\gamma$, $\|\psi\|_1$, $\|\psi\|_2$ and $\|\psi\|_{\infty}$.
In (\ref{def_threshold}), we set
\begin{equation}
B\left(\frac{\f}{n}\right) = \frac{1}{n} B(\f) = \frac{1}{n} \left\| \sum_{i=1}^{n} \left[\f(\cdot-U_i) - \frac{n-1}{n} \E_{\pi}(\f(\cdot-U))\right] \right\|_{\infty}  \label{def_B}
\end{equation}
and
\begin{equation}
\widetilde{V}\left(\frac{\f}{n}\right) = \frac{1}{n^2} \widetilde{V}(\f) = \frac{1}{n^2} \left(\hat{V}(\f) + \sqrt{2\gamma j_0\hat{V}(\f)B^2(\f)} + 3\gamma j_0B^2(\f)\right)  \label{def_tildeV}
\end{equation}
where
\begin{equation}
\hat{V}(\f) = \int_\R \left(\sum_{i=1}^{n} \left[\f(t-U_i) - \frac{n-1}{n} \E_{\pi}(\f(t-U))\right]\right)^2 \,dN_{t}.  \label{def_hatV}
\end{equation}
Since they only depend on the observations, the numerical values of $B(\f)$, $\hat{V}(\f)$ and so $\widetilde{V}(\f)$ defined respectively by (\ref{def_B}), (\ref{def_hatV}) and (\ref{def_tildeV}) can be exactly computed.

We denote $\tilde{\beta}$ the estimator of $\beta = (\beta_\lambda)_{\lambda\in\Lambda}$ associated with the previous thresholding rule:
\begin{equation}
\tilde{\beta} = \left( \hat{\beta}_{\lambda} \1_{|\hat{\beta}_{\lambda}|\geq\eta_{\lambda}(\gamma,\Delta)} \1_{\lambda\in\Gamma} \right)_{\lambda\in\Lambda}  \label{def_tildebeta}
\end{equation}
and finally, we set
\begin{equation}
\tilde{h} = \sum_{\lambda\in\Lambda} \tilde{\beta}_{\lambda} \tilde\varphi_{\lambda}  \label{def_tildeh}
\end{equation}
an estimator of $h$ that only depends on the choice of $(\gamma,\Delta)$ and $j_0$ fixed later.

Thresholding procedures have been introduced by Donoho and Johnstone \cite{DJ}. They derive from the sufficiency to keep a small amount of the coefficients to have a good estimation of the function $h$. The threshold $\eta_\lambda(\gamma,\Delta)$ seems to be defined in a rather complicated manner but the first term: $\sqrt{2\gamma j_0 \widetilde{V}\left(\frac{\f}{n}\right)}$ looks like the universal threshold proposed by \cite{DJ} in the Gaussian regression framework, by choosing $\gamma$ close to 1 and $j_0$ of order $\log{n}$. The universal threshold of \cite{DJ} is defined by $\eta_\lambda = \sqrt{2\sigma^2\log{n}}$, where $\sigma^2$ (assumed to be known) is the variance of each noisy wavelet coefficient. In our setting, $\var(\hat{\beta}_{\lambda})$ depends on $h$, so it is (over)estimated by $\widetilde{V}\left(\frac{\f}{n}\right)$. The other terms of the threshold (\ref{def_threshold}) are unavoidable remaining terms which allow to obtain sharp concentration inequalities.

%%%%%%%%%%  2.3  Main result and discussions  %%%%%%%%%%
\subsection{Main result and discussions}

Our main result is an oracle one. Given a collection of procedures (for example, penalization, projection or thresholding), the oracle represents the ideal "estimator" among the collection. In our setting the oracle gives, for our thresholding rule, the coefficients that have to be kept. In our framework (see \cite{DJ} and \cite{RBR}), the "oracle estimator" is
\[\bar{h} = \sum_{\lambda\in\Gamma} \bar{\beta}_\lambda \tilde\varphi_\lambda, \quad \mbox{with} \quad \bar{\beta}_\lambda = \hat{\beta}_\lambda \1_{\var(\hat{\beta}_\lambda) < \beta_\lambda^2}.\]
This "estimator" is not a true estimator, of course, since it depends on $h$. The approach of optimal adaptation is to derive true estimators which achieve the same performance as the "oracle estimator".
Our goal is now to compare the risk of $\tilde h$ defined in Section 2.2 to the oracle risk:
\[\E\left(\|\bar{h}-h\|_2^2\right) = \sum_{\lambda\in\Gamma} \E\big[(\hat{\beta}_\lambda \1_{\var(\hat{\beta}_\lambda) < \beta_\lambda^2} - \beta_\lambda)^2\big] + \sum_{\lambda\not\in\Gamma} \beta_\lambda^2 = \sum_{\lambda\in\Gamma} \min(\var(\hat{\beta}_\lambda),\beta_\lambda^2) + \sum_{\lambda\not\in\Gamma} \beta_\lambda^2.\]

\begin{thm}  \label{thm1}
We assume that $n \geq 2$, $j_0\in\N^*$ such that $2^{j_0} \leq n < 2^{j_0+1}$, $\gamma > 2\log2$ and $\Delta$ is defined in the Appendix by (\ref{def_Delta}) and (\ref{def_d}).
Then the estimator $\tilde{h}$ defined in Section 2.2 satisfies
\[\E\left(\|\tilde{h}-h\|_2^2\right) \leq C_1 \inf_{m \subset \Gamma} \Bigg\{\sum_{\lambda \not\in m} \beta_{\lambda}^{2} + \Bigg[(\log{n})^4 \times \frac{1}{n} + (\log{n})^2 \times \frac{n}{T^2}\Bigg] |m|\Bigg\} + C_2 \Bigg[\frac{1}{n} + \frac{n}{T^{2}}\Bigg],\]
where $|m|$ is the cardinal of the set $m$, $C_1$ is a positive constant depending on $\gamma$, $\|h\|_1$, $\|h\|_{\infty}$, $\|\psi\|_1$, $\|\psi\|_2$ and $\|\psi\|_{\infty}$ and $C_2$ is a positive constant depending on the compact support of $h$, $\|h\|_1$, $\|h\|_{\infty}$, the compact support of $\psi$, $\|\psi\|_1$, $\|\psi\|_2$ and $\|\psi\|_{\infty}$.
\end{thm}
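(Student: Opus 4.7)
The plan is to follow the classical route for wavelet thresholding oracle inequalities (in the spirit of Donoho--Johnstone~\cite{DJ} and Reynaud-Bouret--Rivoirard~\cite{RBR}), adapted to the double-asymptotic regime in $(n,T)$ that Lemma~\ref{lemEST} makes visible. First I would use the Riesz-basis property of the Cohen--Daubechies--Feauveau system~\cite{CDF} to obtain $\|\tilde h-h\|_2^2 \leq C\sum_{\lambda\in\Lambda}(\tilde\beta_\lambda-\beta_\lambda)^2$, where $C$ depends only on $\tilde\psi$. Splitting according to whether $\lambda\in\Gamma$ or $\lambda\notin\Gamma$ reduces everything to two tasks: controlling the high-frequency tail, and controlling the thresholded error on $\Gamma$.

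For $\lambda\notin\Gamma$ one has $\tilde\beta_\lambda=0$, so the contribution is $\sum_{\lambda\notin\Gamma}\beta_\lambda^2$. Since $h$ and $\psi$ are both compactly supported, only finitely many $k$'s contribute at each level, and the crude bound $|\beta_{j,k}|\leq\|h\|_\infty\|\psi_{j,k}\|_1\lesssim 2^{-j/2}$ yields $\sum_{j>j_0,\,k}\beta_{j,k}^2 \lesssim 2^{-j_0}\lesssim 1/n$, which is absorbed in the $C_2/n$ remainder.

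For $\lambda\in\Gamma$, fix any candidate $m\subseteq\Gamma$. I would apply the now-standard thresholding argument: from
\begin{equation*}
(\tilde\beta_\lambda-\beta_\lambda)^2 = (\hat\beta_\lambda-\beta_\lambda)^2\mathbf{1}_{|\hat\beta_\lambda|\geq\eta_\lambda} + \beta_\lambda^2\mathbf{1}_{|\hat\beta_\lambda|<\eta_\lambda},
\end{equation*}
combined with the dichotomy $|\beta_\lambda|\leq 2\eta_\lambda$ versus $|\beta_\lambda|>2\eta_\lambda$ on the ``good'' event $\{|\hat\beta_\lambda-\beta_\lambda|\leq\eta_\lambda\}$, one gets the pointwise bound $(\tilde\beta_\lambda-\beta_\lambda)^2\leq C\min(\eta_\lambda^2,\beta_\lambda^2)$. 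Writing $\min(\eta_\lambda^2,\beta_\lambda^2)\leq \eta_\lambda^2\mathbf{1}_{\lambda\in m}+\beta_\lambda^2\mathbf{1}_{\lambda\notin m}$ and taking the infimum over $m$ produces the oracle form. It then remains to prove $\E[\eta_\lambda^2]\leq C[(\log n)^4/n + (\log n)^2 n/T^2]$ uniformly in $\lambda\in\Gamma$; this follows from Lemma~\ref{lemEST} applied to $\widetilde V(\varphi_\lambda/n)$ and $B(\varphi_\lambda/n)$, the choice $j_0\asymp\log n$, the cardinality bound $|\Gamma|\lesssim n$, and the prescribed order of $\Delta$.

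The hard part, and the step that fixes the form of $\eta_\lambda$ in~(\ref{def_threshold}), is to control the ``bad'' events on which either $|\hat\beta_\lambda-\beta_\lambda|>\eta_\lambda$ or the empirical statistics $\widetilde V$, $B$ deviate too much from their theoretical counterparts. Since $G(\varphi_\lambda)$ is a Poisson integral plus a $U$-statistic-type centering, my strategy is to condition on $U_1,\ldots,U_n$ and apply a Bernstein inequality for Poisson processes (producing terms in $\sqrt{\hat V}$ and $B$), and then to uncondition through exponential and moment inequalities for degenerate $U$-statistics following~\cite{dPG,GLZ,HRB} (producing the residual term carried by $\Delta$). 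Each of the three pieces of $\eta_\lambda$ is calibrated to dominate one of the sub-Gaussian, sub-exponential and higher-order contributions arising in this chain, and the hypothesis $\gamma>2\log 2$ forces the probability of a bad event to decay faster than $n^{-2}$, so that after summing over $|\Gamma|\lesssim n$ the total bad-event contribution fits inside the residual $C_2[1/n+n/T^2]$. Comparing the data-driven $\widetilde V$ with the true Bernstein variance and matching the $\Delta$ term with the $U$-statistic concentration remainder is where I expect the real technical work to lie.
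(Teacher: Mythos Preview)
Your high-level route---Riesz-basis reduction, good/bad event split, Bernstein conditioned on the $U_i$'s, then $U$-statistic concentration to uncondition---is exactly the paper's route, which is packaged through Theorem~2.2 of~\cite{RBR} (their abstract thresholding theorem with assumptions (A1)--(A3)). So the strategy is right. Two steps in your sketch, however, do not close as written.

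First, the bad-event term. You say that because $\gamma>2\log 2$ the bad-event probability is $o(n^{-2})$ and hence the sum over $\Gamma$ is absorbed in the residual. But on the event $\{|\hat\beta_\lambda-\beta_\lambda|>\eta_\lambda\}\cap\{|\hat\beta_\lambda|\geq\eta_\lambda\}$ the contribution to the risk is $(\hat\beta_\lambda-\beta_\lambda)^2$, which is an unbounded Poisson functional; a small probability alone does not control its expectation. You need a H\"older step
\[
\E\!\big[(\hat\beta_\lambda-\beta_\lambda)^2\mathbf 1_{\mathrm{bad}}\big]\leq\big[\E|\hat\beta_\lambda-\beta_\lambda|^{2p}\big]^{1/p}\,\P(\mathrm{bad})^{1/q},
\]
and hence a $2p$-th moment bound on $\hat\beta_\lambda-\beta_\lambda$ that is uniform in $\lambda\in\Gamma$. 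In the paper this is Assumption~(A2), and its verification is the longest part of the proof: a Rosenthal-type inequality for Poisson integrals (\ref{lemRIPP}) to handle the conditional part, plus moment inequalities for the degenerate $U$-statistics $W_1,W_2$ via~\cite{dPG,GLZ}. Your citations~\cite{dPG,GLZ,HRB} are the right ones, but you have allocated them only to building the threshold (the (A1) step); you also need them---separately, and in a different form---for (A2). The condition $\gamma>2\log 2$ is what makes $\omega^{1/q}\,2^{j_0}$ bounded once (A2) supplies the missing moment factor, not what bounds the bad-event contribution by itself.

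Second, the bound on $\E[\eta_\lambda^2]$. This does not follow from \ref{lemEST}: that lemma controls $\var(\hat\beta_\lambda)$, whereas the threshold involves $\widetilde V(\varphi_\lambda/n)$, $B(\varphi_\lambda/n)$ and $N_\R/n$. For $\E[\hat V(\varphi_\lambda)]=\E[V(\varphi_\lambda)]$ one can recycle the computation leading to~(\ref{pr-lemEST_PP}), but $\E[B^2(\varphi_\lambda)]$ requires an empirical-process argument (the paper uses a VC bound together with a moment comparison \`a la~\cite{BBLM}) and is what ultimately produces the $j_0^2\,2^{j_0}/n^2$ piece that, after $2^{j_0}\leq n$, becomes part of $(\log n)^4/n$. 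You should flag this as a separate lemma rather than fold it into \ref{lemEST}.
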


As the expression between brackets is of the same order as the upper bound of $\var(\hat{\beta}_{\lambda})$ established in \ref{lemEST} (up to a logarithmic term), the oracle type inequality of \ref{thm1} proves that the estimator $\tilde{h}$ achieves satisfying theoretical properties.

In particular, if we apply \ref{thm1} with $n$ proportional to $T$ ("DNA case"), then the estimator $\tilde{h}$ defined in Section 2.2 satisfies
\[\E\left(\|\tilde{h}-h\|_2^2\right) \leq C_1 \inf_{m \subset \Gamma} \Bigg\{\sum_{\lambda \not\in m} \beta_{\lambda}^{2} + \frac{(\log{T})^4}{T} |m|\Bigg\} + \frac{C_2}{T}.\]
This oracle type inequality is similar to the one obtained by Theorem 1 of \cite{RBS} where the Hawkes process is considered. Since $n$ is proportional to $T$, this inequality is typical of classical oracle inequalities obtained in model selection (for example, see Theorem 2.1 of \cite{RBR} where only one Poisson process on the real line is considered or more generally, see \cite{Mas} for density estimation).

Then, we establish a minimax result on Besov balls still with $n$ is proportional to $T$.
For any $R>0$ and $s\in\R$ such that $0<s<r+1$ (where $r>0$ denotes the wavelet smoothness parameter introduced in the description of the biorthogonal wavelet bases at the beginning of the current section), we consider the following Besov ball of radius $R$:
\[\mathcal{B}^{s}_{2,\infty}(R) = \left\{f \in \L_2(\R) : f = \sum_{\lambda\in\Lambda} \beta_{\lambda} \tilde\varphi_{\lambda}, \forall j \geq -1, \sum_{k \in \mathcal{K}_j} \beta_{(j,k)}^2 \leq R^2 2^{-2js}\right\}.\]
Now, let us state the upper bound of the risk of $\tilde{h}$ when $h$ belongs to $\mathcal{B}^{s}_{2,\infty}(R)$.

\begin{cor}
Let $R>0$ and $s\in\R$ such that $0<s<r+1$. Assume that $h\in\mathcal{B}^{s}_{2,\infty}(R)$ and $n$ is proportional to $T$.
Then the estimator $\tilde{h}$ defined in Section 2.2 satisfies
\[\E\left(\|\tilde{h}-h\|_2^2\right) \leq C \left(\frac{(\log{T})^4}{T}\right)^{\frac{2s}{2s+1}},\]
where $C$ is a positive constant depending on $\gamma$, the compact support of $h$, $\|h\|_1$, $\|h\|_{\infty}$, the compact support of $\psi$, $\|\psi\|_1$, $\|\psi\|_2$, $\|\psi\|_{\infty}$ and $R$.
\end{cor}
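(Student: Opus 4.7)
The plan is a standard bias/variance trade-off argument, starting from the oracle inequality in the DNA case form stated just before the corollary, namely
\[\E\left(\|\tilde{h}-h\|_2^2\right) \leq C_1 \inf_{m \subset \Gamma} \Bigg\{\sum_{\lambda \not\in m} \beta_{\lambda}^{2} + \frac{(\log T)^{4}}{T} |m|\Bigg\} + \frac{C_2}{T}.\]
I would pick $m$ to be an entire block of low-resolution levels, i.e. $m = \{\lambda=(j,k)\in\Lambda : -1 \leq j \leq J,\ k\in\mathcal{K}_j\}$, where $J$ is an integer to be optimized, subject to $J\leq j_0$. This $m$ is a subset of $\Gamma$ by construction.

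Next I would estimate the two terms separately. For the bias term, coefficients $\beta_{(j,k)}$ with $k\notin\mathcal{K}_j$ vanish because the supports of $\f$ and $h$ are then disjoint, so only the missing high levels $j>J$ contribute; the Besov assumption $\sum_{k\in\mathcal{K}_j}\beta_{(j,k)}^2 \leq R^2 2^{-2js}$ and a geometric summation in $j$ yield
\[\sum_{\lambda\not\in m} \beta_\lambda^2 = \sum_{j>J}\sum_{k\in\mathcal{K}_j}\beta_{(j,k)}^2 \leq \frac{R^2}{1-2^{-2s}}\, 2^{-2Js},\]
valid since $s>0$. For the variance term, I would use the compact support of $h$ and the compact support of $\psi$ to bound $|\mathcal{K}_j|\leq c\,2^j$ (for $j\geq 0$), which gives $|m|\leq c'\,2^{J}$.

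Then the infimum bound reads
\[\E\left(\|\tilde{h}-h\|_2^2\right) \leq C \left[ 2^{-2Js} + \frac{(\log T)^4}{T}\,2^{J} \right] + \frac{C_2}{T}.\]
I would balance the two terms by choosing $J$ as the largest integer satisfying $2^{J(2s+1)} \leq T/(\log T)^4$, which gives $2^J \asymp \bigl(T/(\log T)^4\bigr)^{1/(2s+1)}$ and therefore both terms of order $\bigl((\log T)^4/T\bigr)^{2s/(2s+1)}$. Finally the residual $C_2/T$ is absorbed, since $2s/(2s+1)<1$.

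The only non-mechanical point — and the one I would check explicitly — is the admissibility constraint $J\leq j_0$. Because $n$ is proportional to $T$ and $2^{j_0}\asymp n$, the chosen $J$ satisfies $2^J \leq T^{1/(2s+1)} \ll T \asymp 2^{j_0}$ for $T$ large enough, so $J\leq j_0$ holds eventually and the choice of $m$ is legitimate. Small values of $T$ can be handled by absorbing them into the constant $C$. This is the only real obstacle; the rest is Besov bookkeeping.
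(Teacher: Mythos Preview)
Your argument is correct and is exactly the standard bias--variance balancing that the corollary is meant to illustrate; the paper does not spell out a separate proof of the corollary, treating it as an immediate consequence of the DNA-case oracle inequality, and your derivation is precisely that consequence. The only points worth double-checking---that $\beta_{(j,k)}=0$ for $k\notin\mathcal{K}_j$, that $|\mathcal{K}_j|\leq c\,2^j$ via the compact supports of $h$ and $\psi$, and the admissibility $J\leq j_0$---are all handled correctly in your write-up.
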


The rate of the risk of $\tilde{h}$ corresponds to the minimax rate, up to the logarithmic term, for estimation of a compactly supported intensity of a Poisson process (see \cite{Rey2}) or for a compactly supported density when we have $n$ i.i.d.\,observations (see \cite{DJKP}). One more time this illustrates the optimality of the procedure $\tilde{h}$ but in the minimax setting.

%%%%%%%%%%%%%%%%%%%%%%%%%%%%%%%%%%%%%%%%%%%%%%%%%%%
%%%%%%%%%%  3  Implementation procedure  %%%%%%%%%%
%%%%%%%%%%%%%%%%%%%%%%%%%%%%%%%%%%%%%%%%%%%%%%%%%%%
\section{Implementation procedure}

From now on we consider the context of DNA, i.e.\,$n$ is proportional to $T$. As mentioned in Introduction, we can assume that the parents are the points of a homogeneous Poisson process $N^0$ on $[0;T]$ with constant intensity $\mu$ which allows to write $n \simeq \mu T$.

In this section, we specify a procedure for the computation of the family of random thresholds $(\eta_\lambda(\gamma,\Delta))_{\lambda\in\Gamma}$ to reconstruct the reproduction function $h$. We also provide some simulations in order to calibrate parameters from a numerical point of view and to show the robustness of our procedure.

%%%%%%%%%%  3.1  Algorithm  %%%%%%%%%%
\subsection{Algorithm}

We only focus on the Haar basis where
\[\phi=\tilde\phi=\1_{[0;1]} \quad \mbox{and} \quad \psi=\tilde\psi=\1_{]\frac12;1]}-\1_{[0;\frac12]},\]
because the expression of the functions associated to this basis, that are piecewise constant functions, allows to implement simple and fast algorithms. Furthermore, considering this kind of functions is suitable for our genomic setting. In fact, according to biological studies, the reproduction function $h$ is expected to be very irregular, with large null ranges and sudden changes at specific distances.
We recall that $h$ is assumed to be compactly supported in $[-A;A]$, with $A$ a positive integer in practice.

We consider the thresholding rule $\tilde{h}$ defined in Section 2.2 with
\[\Gamma = \big\{\lambda = (j,k) \in \Lambda : -1 \leq j \leq j_0, k \in \mathcal{K}_j\big\},\]
and
\[\eta_\lambda(\gamma,\delta) = \sqrt{2\gamma j_0 \hat{V}\left(\frac{\f}{n}\right)} + \frac{\gamma j_0}{3} B\left(\frac{\f}{n}\right) + \frac{\delta}{\sqrt{T}} \frac{N_\R}{n}.\]
Observe that $\eta_\lambda(\gamma,\delta)$ slightly differs from the threshold defined in (\ref{def_threshold}) since the parameter $\Delta$ is replaced with $\frac{\delta}{\sqrt{T}}$ (thanks to the definition (\ref{def_Delta}) of $\Delta$) and $\widetilde{V}(\f)$ is now replaced with $\hat{V}(\f)$ (there is no major difference in our simulations).
The ideal choice (from a theoretical point of view) of the maximal resolution level $j_0$ is given by \ref{thm1}, that is to say $j_0$ is the positive integer such that $2^{j_0} \leq n < 2^{j_0+1}$.
But we will fix $j_0=5$ in the sequel (in particular, to limit the computation time).
The choice of the parameters $\gamma$ and $\delta$ is discussed in the next subsection.

A key point of the algorithm is the computation of the quantity
\[S(\f)(t) = \sum_{i=1}^{n} \left[\f(t-U_i) - \frac{n-1}{n} \E_{\pi}(\f(t-U))\right], \quad \mbox{for all $t \in \R$}\]
that appears in $\hat{\beta}_{\lambda}$, $B(\f)$ and $\hat{V}(\f)$.
We decompose it into two parts: a random "piecewise constant" part $\displaystyle S_r(\f) = \sum_{i=1}^{n} \f(\cdot-U_i)$ and a deterministic (piecewise affine) part $(n-1) \E_{\pi}(\f(t-U))$.
Note that the deterministic part can easily be implemented with a low computational cost.
This is not the case of the random "piecewise constant" part for which we have constructed a cascade algorithm, inspired by the pioneering work of Mallat \cite{Mal}.
To explain this algorithm in few words, we use the following notations: for any $j \geq 0$, for any $k\in\Z$, for any $x\in\R$,
\[\phi_{j,k}(x) = 2^{j/2} \phi(2^jx-k), \quad \psi_{j,k}(x) = 2^{j/2} \psi(2^jx-k),\]
where $\phi_{j,k}$ are father wavelets and $\psi_{j,k}$ mother wavelets. We have the following relationships between wavelets at level $j$ and wavelets at level $(j+1)$:
\begin{equation}
\psi_{j,k} = \frac{\sqrt{2}}{2}\big(\phi_{j+1,2k+1} - \phi_{j+1,2k}\big) \quad \mbox{and} \quad \phi_{j,k} = \frac{\sqrt{2}}{2}\big(\phi_{j+1,2k} + \phi_{j+1,2k+1}\big).  \label{rel_wavelet}
\end{equation}
We notice that only mother wavelets and the father wavelet of level $j=0$ (that corresponds to $\f$ with $\lambda = (-1,k)$) are used to reconstruct the signal.
The cascade algorithm is implemented as follows.
\begin{enumerate}
  \item Compute $S_r(\phi_{j_0,0})$. Since $S_r(\phi_{j_0,0})$ is a piecewise constant function, this computation gives a partition and the values of $S_r(\phi_{j_0,0})$ on the intervals of the partition.
  \item Shift by $+2^{-j_0}k$ the intervals of the previous partition by keeping the same values on the partition to obtain $S_r(\phi_{j_0,k})$ for any integer $k$ in $[-2^{j_0}A;2^{j_0}A-1]$.
  \item For any resolution level $j$ going from $(j_0-1)$ to 0, in a decreasing way, compute $S_r(\psi_{j,k})$ and $S_r(\phi_{j,k})$ with expressions (\ref{rel_wavelet}). The quantities $S_r(\psi_{j,k})$ allow the reconstruction of the signal and the quantities $S_r(\phi_{j,k})$ are transitional and will be used for the computations of the lower resolution level $(j-1)$.
  \item Also keep $S_r(\phi_{0,k})$ because it is used for the reconstruction of the signal.
\end{enumerate}

Now, let us define our thresholding estimate of $h$ for a practical purpose.
\begin{description}
  \item[Step 0] Let $j_0=5$ and choose also positive constants $\gamma$ and $\delta$.
  \item[Step 1] Set $\Gamma = \big\{\lambda = (j,k) \in \Lambda : -1 \leq j \leq j_0, k \in \mathcal{K}_j\big\}$ and compute for any $\lambda$ in $\Gamma$, $S(\f)(X)$ for all points $X$ of the process $N$. In the same way, also compute the coefficients $\hat{\beta}_{\lambda}$, $B(\f)$ and $\hat{V}(\f)$.
  \item[Step 2] Threshold the coefficients by setting $\tilde{\beta}_{\lambda} = \hat{\beta}_{\lambda} \1_{|\hat{\beta}_{\lambda}|\geq\eta_{\lambda}(\gamma,\delta)}$ according to the following threshold choice:
      \[\eta_\lambda(\gamma,\delta) = \sqrt{2\gamma j_0 \hat{V}\left(\frac{\f}{n}\right)} + \frac{\gamma j_0}{3} B\left(\frac{\f}{n}\right) + \frac{\delta}{\sqrt{T}} \frac{N_\R}{n}.\]
  \item[Step 3] Reconstruct the function $h$ by using the $\tilde\beta_\lambda$'s and denote
      \[\tilde{h} = \sum_{\lambda\in\Lambda} \tilde{\beta}_{\lambda} \tilde\varphi_{\lambda}.\]
\end{description}

%%%%%%%%%%  3.2  Experiments on simulated data  %%%%%%%%%%
\subsection{Experiments on simulated data}

The programs have been coded in \texttt{Scilab 5.2} and are available upon request.

%%%%%%%%%%  3.2.1  Choice of parameters  %%%%%%%%%%
\subsubsection{Choice of parameters}

Now, we deal with the choice of the parameters $\gamma$ and $\delta$ in our procedure from a practical point of view. The question is: how to choose the optimal parameters? We work with two testing functions denoted 'Signal1' and 'Signal2' whose definitions are given in the following table:
\begin{center}
\begin{tabular}{|c|c|}
\hline & \\
'Signal1' & 'Signal2' \\
& \\
$\nu \times \1_{[0;1]}$ & $\nu\times\frac{8}{3}\left(\1_{[0.5;0.625]}+\1_{[1;1.25]}\right)$ \\
& \\ \hline
\end{tabular}
\end{center}
with $\nu$, the children's intensity, set to 4.
We fix willfully $A=10$. Such a choice of $A$ (remember that $[-A;A]$ is the support of $h$) assumes that we do not know the support of functions.
We recall that $j_0=5$.

Given $T$, $\mu$ the parents' intensity and a testing function, we denote $R(\gamma,\delta)$ the quadratic risk of our procedure $\tilde{h}$ (depending on $(\gamma,\delta)$) defined in Section 3.1. Of course, we aim at finding values of $(\gamma,\delta)$ such that this quadratic risk is minimal. The average over 100 simulations of $R(\gamma,\delta)$ is computed providing an estimation of $\E(R(\gamma,\delta))$. This average risk, denoted $\bar{R}(\gamma,\delta)$ and viewed as a function of the parameters $(\gamma,\delta)$, is plotted for $(T,\mu) \in \{(10000,0.1),(2000,0.1),(2000,0.5)\}$ and for the two signals considered previously: 'Signal1' and 'Signal2'.

\begin{figure}[ht]
  \begin{center}
  \includegraphics[width=13cm,height=6cm]{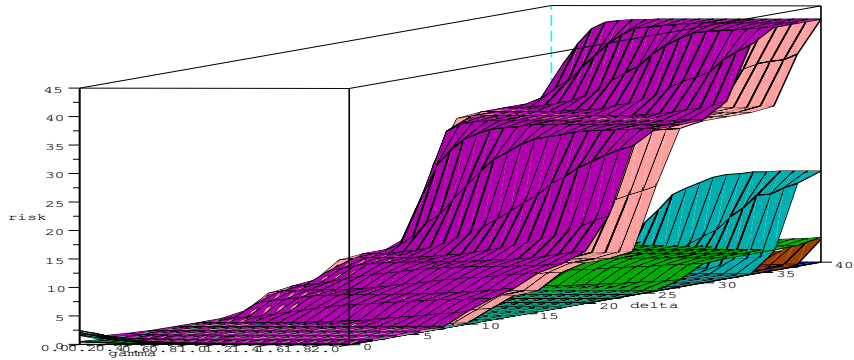}
  \caption{{\small\emph{The function $(\gamma,\delta) \mapsto \bar{R}(\gamma,\delta)$ for 'Signal1' and 'Signal2' for different values of $T$ and $\mu$: 'Signal1' in \textcolor[rgb]{0,0,0.69}{$\blacksquare$}  and 'Signal2' in \textcolor[rgb]{0,0.69,0}{$\blacksquare$} with $(T,\mu)=(10000,0.1)$; 'Signal1' in \textcolor[rgb]{0,0.69,0.69}{$\blacksquare$}  and 'Signal2' in \textcolor[rgb]{0.69,0,0.69}{$\blacksquare$} with $(T,\mu)=(2000,0.1)$; 'Signal1' in \textcolor[rgb]{0.63,0.25,0}{$\blacksquare$}  and 'Signal2' in \textcolor[rgb]{1,0.63,0.63}{$\blacksquare$} with $(T,\mu)=(2000,0.5)$}}.}
  \label{fig1}
  \end{center}
\end{figure}

\ref{fig1} displays $\bar{R}$ for 'Signal1' and 'Signal2' decomposed on the Haar basis.
This figure allows to draw the following conclusion: for any $(T,\mu) \in \{(10000,0.1),(2000,0.1),(2000,0.5)\}$ and for 'Signal1' or 'Signal2',
\[\bar{R}(\gamma,\delta)\approx 0\]
for many values of $(\gamma,\delta)$. So, we observe a kind of "plateau phenomenon".

Reconstructions of the intensities of 'Signal1' and 'Signal2' are respectively given in \ref{fig2} and \ref{fig3} with the choice $(\gamma,\delta)=(0.18,2.4)$, a common value of several plateaus.
Note the good performance of our thresholding rule, in particular for $T=10000$ and $\mu=0.1$ (we have $\mu T=1000$ parents and $\mu\nu T=4000$ children in average), which corresponds to the real case treated in Section 4.
Thus, we propose to take systematically $(\gamma,\delta)=(0.18,2.4)$ in our procedure $\tilde{h}$ defined in Section 3.1.

\begin{figure}[ht]
  \begin{center}
  \includegraphics[width=13cm,height=7cm]{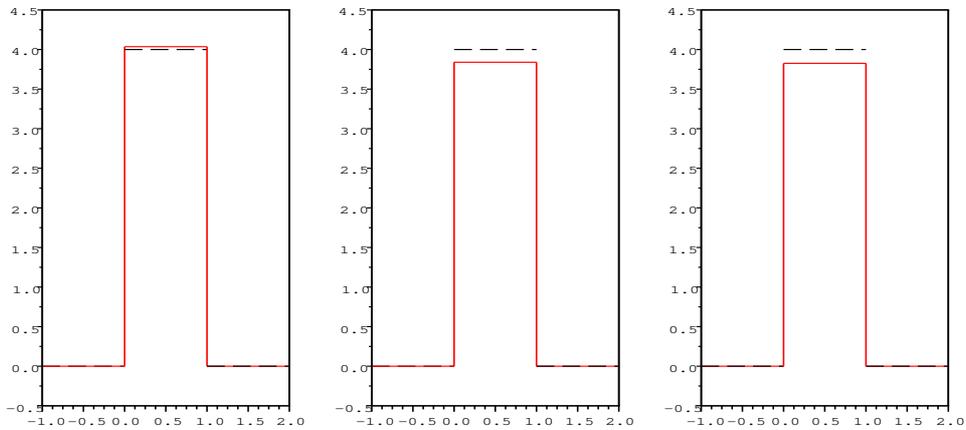}
  \caption{{\small\emph{Reconstructions of 'Signal1' (true: dotted line, estimate: solid line): left: $(T,\mu)=(10000,0.1)$; middle: $(T,\mu)=(2000,0.1)$; right: $(T,\mu)=(2000,0.5)$}}.}
  \label{fig2}
  \end{center}
\end{figure}

\begin{figure}[ht]
  \begin{center}
  \includegraphics[width=13cm,height=7cm]{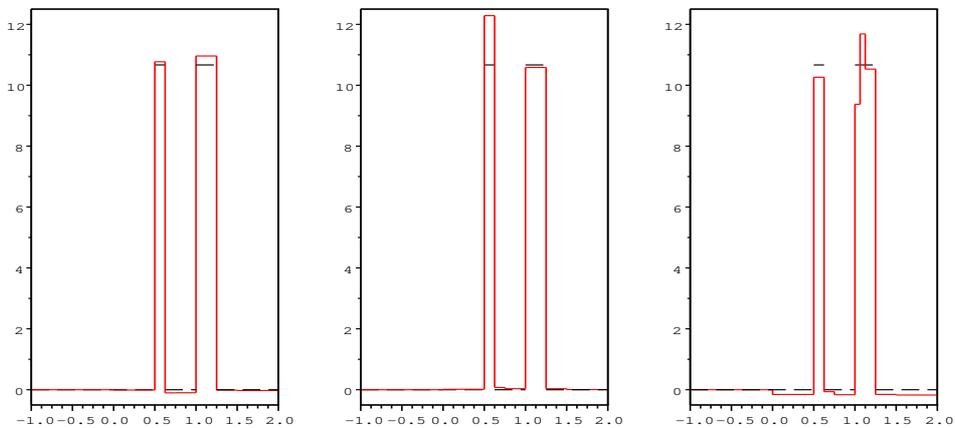}
  \caption{{\small\emph{Reconstructions of 'Signal2' (true: dotted line, estimate: solid line): left: $(T,\mu)=(10000,0.1)$; middle: $(T,\mu)=(2000,0.1)$; right: $(T,\mu)=(2000,0.5)$}}.}
  \label{fig3}
  \end{center}
\end{figure}

%%%%%%%%%%  3.2.2  About the support of $h$  %%%%%%%%%%
\subsubsection{About the support of $h$}

We are interested in the robustness of our procedure with respect to the support issue from a numerical point of view.
What happens if we are wrong about the support of the function that we want to estimate?
For instance, we consider the testing function denoted 'Signal3' whose definition is given in the following table:
\begin{center}
\begin{tabular}{|c|}
\hline \\
'Signal3' \\
\\
$\nu \times \frac{1}{4}\left(\1_{[-0.75;-0.5]}+\1_{[4.25;8]}\right)$ \\
\\ \hline
\end{tabular}
\end{center}
with $\nu$, the children's intensity, set to $4$.

\ref{fig4} displays reconstructions of 'Signal3' with different supports of $h$: $[-A;A]$, with $A\in\{1,5,10\}$.
This figure shows that when we take a not large enough support ($A=1$ or $5$), we do not make large errors of approximation on $[-A;A]$. So, the procedure seems to take into account what happens beyond the chosen support.
And for $A=10$, we have a good complete reconstruction of 'Signal3'.

\begin{figure}[ht]
  \begin{center}
  \includegraphics[scale=0.5]{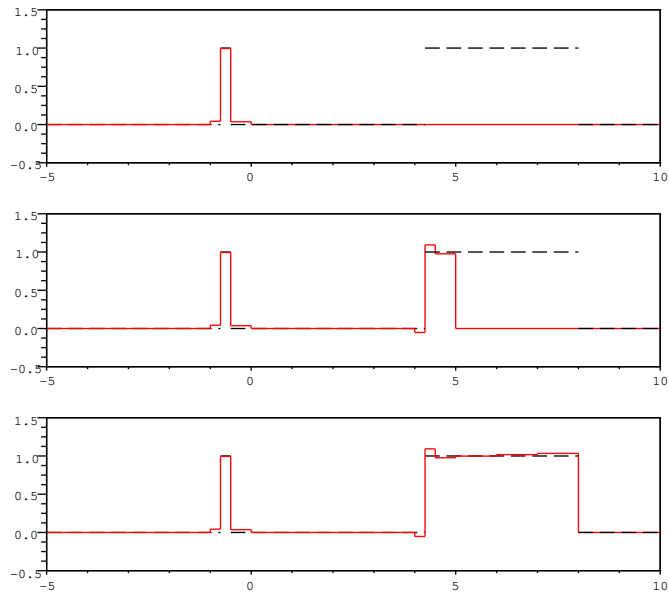}
  \caption{{\small\emph{Reconstructions of 'Signal3' (true: dotted line, estimate: solid line) with different supports: top: $A=1$; middle: $A=5$; bottom: $A=10$}}.}
  \label{fig4}
  \end{center}
\end{figure}

Finally, even if the support of the reproduction function is unknown, our method estimates correctly the signal on the chosen support, which explains the robustness of our procedure with respect to the support issue.

%%%%%%%%%%  3.2.3  The case of spontaneous apparition  %%%%%%%%%%
\subsubsection{The case of spontaneous apparition}

Here, we investigate the case of spontaneous apparition.
Even if our model does not take into account the spontaneous apparition (i.e.\,children can not be orphans), we are interested by the performance of our procedure if there is a presence of orphans.
On the one hand, let us give two processes: a process of intensity 'Signal1' with $\nu=3$, $T=10000$ and $\mu=0.1$, to which is added a homogeneous Poisson process on $[0;T+1]$ with intensity $\mu(4-\nu)=0.1$ (the orphans are viewed as a Poissonian noise). Thus, we have in average 1000 parents, 3000 children having a parent and 1000 children being orphans.
On the other hand, let us give two other processes: a process of intensity 'Signal1' with $\nu=1$ this time, $T=10000$ and $\mu=0.1$, to which is added a homogeneous Poisson process on $[0;T+1]$ with intensity $\mu(4-\nu)=0.3$. Thus, we have in average 1000 parents, 1000 children having a parent and 3000 children being orphans.

Reconstructions of 'Signal1' with $\nu=3$ and $\nu=1$ are given in \ref{fig5}.
When there is a small proportion of children being orphans, the reconstruction is still acceptable; the procedure can manage few orphans.
But, when there are too many orphans, our procedure makes approximation errors, which are due to the fact that our model consists in associating any child with a parent.

\begin{figure}[ht]
  \begin{center}
  \includegraphics[scale=0.5]{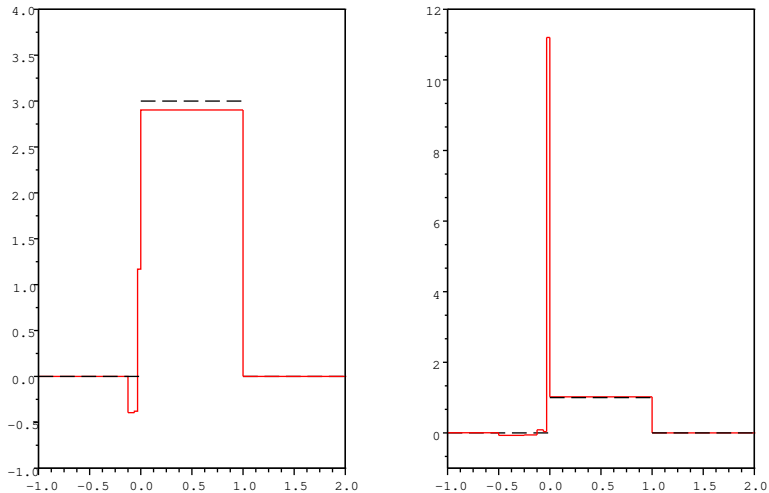}
  \caption{{\small\emph{Reconstructions of 'Signal1' (true: dotted line, estimate: solid line) with different values of $\nu$: left: $\nu=3$; right: $\nu=1$}}.}
  \label{fig5}
  \end{center}
\end{figure}

We mention that the case of spontaneous apparition is only numerical. For a more precise study of this phenomenon, we should extend our model by adding a positive constant to the intensity function $t \longmapsto \sum_{i=1}^{n} h(t-U_i)$, that would represent the orphans. This is outside the scope of this paper.

%%%%%%%%%%%%%%%%%%%%%%%%%%%%%%%%%%%%%%%%%%%%%%%%%%%%%%%
%%%%%%%%%%  4  Applications to genomic data  %%%%%%%%%%
%%%%%%%%%%%%%%%%%%%%%%%%%%%%%%%%%%%%%%%%%%%%%%%%%%%%%%%
\section{Applications to genomic data}

As application, we are interested in the \emph{Escherichia coli} genome. \emph{E.\,coli} is an intestinal bacterium in mammals and very common in humans which is widely studied and used in genetics. More precisely, we are interested in the study of the dependence between promoter sites and genes along the complete genome of the bacterium. In particular, promoters are usually structured motifs located before the genes and not too far from them. Here, we have considered the major promoter of the bacterium \emph{E.\,coli} and more precisely the word \texttt{tataat}. Most of the genes of \emph{E.\,coli} should be preceded by this word at a very short distance apart. In order to validate our thresholding estimation procedure (proposed at Section 3), we hope to detect short favored distances between genes and previous occurrences of \texttt{tataat}.

For this, as in \cite{GS} we have analyzed the sequence composed of both strands of \emph{E.\,coli} genome (4639221 bases); each strand being separated by 10000 artificial bases to avoid artificial dependencies between occurrences on one strand and occurrences on the other strand; we took 10000 bases for the maximal memory. It then represents a sequence of length 9288442; there are 4290 genes (we took the positions of the first base of coding sequences) and 1036 occurrences of \texttt{tataat}. For convenience, we set $T=9289$ and so $A=10$ (we work on a scale of $1:1000$). We recall that we have fixed $j_0=5$ and taken $(\gamma,\delta)=(0.18,2.4)$.

First, we investigate the way the DNA motif \texttt{tataat} influences genes and so, in our model, the parents are the occurrences of \texttt{tataat} and children are the occurrences of genes.
To give general insight on $h$, \ref{fig6} gives the estimator $\tilde h$ defined in Section 3.1 without Step 2 (no thresholding), i.e.\,we have kept all the estimated coefficients.
We observe a peak around $0$ which corresponds to what we thought about the fact that most of the genes of \emph{E.\,coli} should be preceded by the word \texttt{tataat} at a very short distance apart. We also observe other peaks, for instance around 1200 bases. The biological significance of these peaks remains an open question.

\begin{figure}[ht]
  \begin{center}
  \includegraphics[width=13cm,height=6cm]{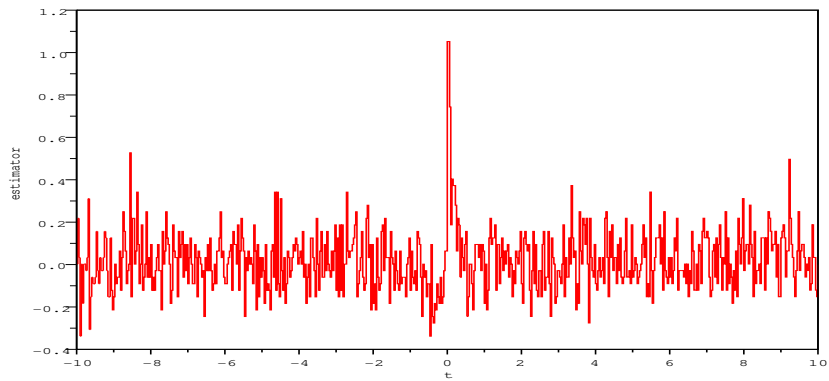}
  \caption{{\small\emph{Estimator, no thresholding, for E.\,coli data at the scale $1:1000$ (i.e.\,1 corresponds to 1000 bases), with parents=\texttt{tataat} and children=genes}}.}
  \label{fig6}
  \end{center}
\end{figure}

We apply the complete procedure proposed in Section 3.1 (with thresholding) and we obtain \ref{fig7}. The shape of this estimator explains how occurrences of genes are influenced by occurrences of \texttt{tataat}. We can draw following conclusions, that coincide with the ones we could expect:
\begin{itemize}
  \item The estimator $\tilde{h}(t)=0$ if $t\leq 0$ and $t\geq 500$. It means that for such $t$'s, gene occurrences seem to be uncorrelated of \texttt{tataat} occurrences.
  \item Conversely, if $t\in [0;500]$, $\tilde{h}(t)>0$, meaning that short distances are favored; smaller the distance, higher is the influence.
\end{itemize}

\begin{figure}[ht]
  \begin{center}
  \includegraphics[width=13cm,height=6cm]{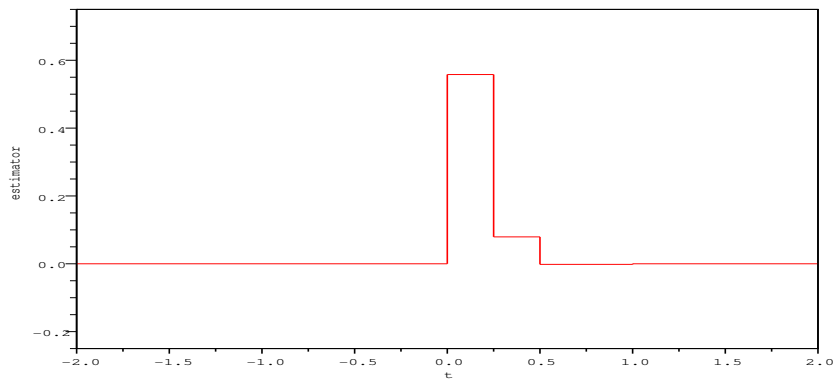}
  \caption{{\small\emph{Estimator $\tilde h$ defined in Section 3.1 for E.\,coli data at the scale $1:1000$, with parents=\texttt{tataat} and children=genes}}.}
  \label{fig7}
  \end{center}
\end{figure}

Then, we investigate the way genes influences the DNA motif \texttt{tataat} and so, in our model, the parents are the occurrences of genes and children are the occurrences of \texttt{tataat}.
\ref{fig8} gives the estimator $\tilde h$ defined in Section 3.1 (with $(\gamma,\delta)=(0.72,2.4)$). The shape of this estimator explains how occurrences of \texttt{tataat} are influenced by occurrences of genes. We can draw following conclusions, that is completely coherent with biological observations:
\begin{itemize}
  \item When $t\leq -500$ and $t\geq 1000$, $\tilde{h}(t)=0$. It means that for such $t$'s, \texttt{tataat} occurrences seem to be uncorrelated of gene occurrences.
  \item When $t\in [-500;0]$, $\tilde{h}(t)>0$, meaning that there is a preference having a word \texttt{tataat} just before the occurrence of a gene. It corresponds to the same conclusions drawn from \ref{fig7} (second point). The motif \texttt{tataat} is part of the most common promoter sites of \emph{E.\,coli} meaning that it should occur in front of the majority of the genes.
  \item When $t\in [0;1000]$, $\tilde{h}(t)<0$; occurrences of \texttt{tataat} are avoided for such distances $t$. Genes on the same strand do not usually overlap and they are about 1000 bases long in average: this fact can explain this conclusion.
\end{itemize}

\begin{figure}[ht]
  \begin{center}
  \includegraphics[width=13cm,height=10cm]{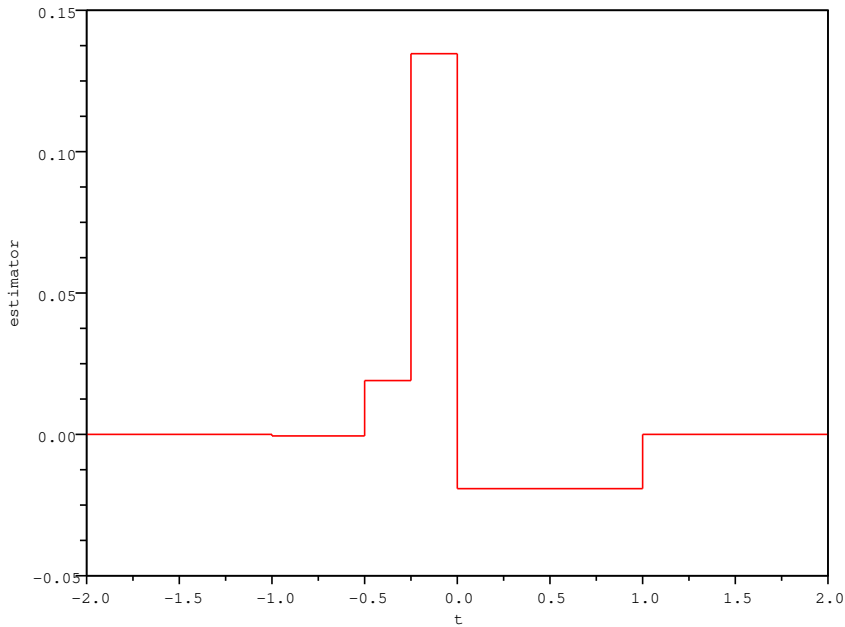}
  \caption{{\small\emph{Estimator $\tilde h$ defined in Section 3.1 for E.\,coli data at the scale $1:1000$, with parents=genes and children=\texttt{tataat}}}.}
  \label{fig8}
  \end{center}
\end{figure}

Finally, \ref{fig9} presents the results of the FADO procedure \cite{GS}
% left=tataat; right=genes
and \ref{fig10} presents the results of the Islands procedure of \cite{RBS}.
% left=tataat; right=genes
For the FADO procedure, we have forced the estimators to be piecewise constant to make the comparison easier.
Our results agree with the ones obtained by FADO and Islands.
But our method has advantage to point out that nothing significant happens after a certain distance (contrary to the FADO procedure), has advantage to treat interaction with another type of events (contrary to the Islands procedure) and has advantage to deal with the dependence on the past occurrences but also on the future occurrences (the function $h$ is supported in $\R_+$ for the two other procedures).
For algorithmic reason, a practical limitation of our method is that we only consider piecewise constant estimators (as for the Islands procedure), but it is enough to get a general trend on favored or avoided distances within a point process.

\begin{figure}[ht]
  \begin{center}
  \subfloat{\includegraphics[angle=90,scale=0.25]{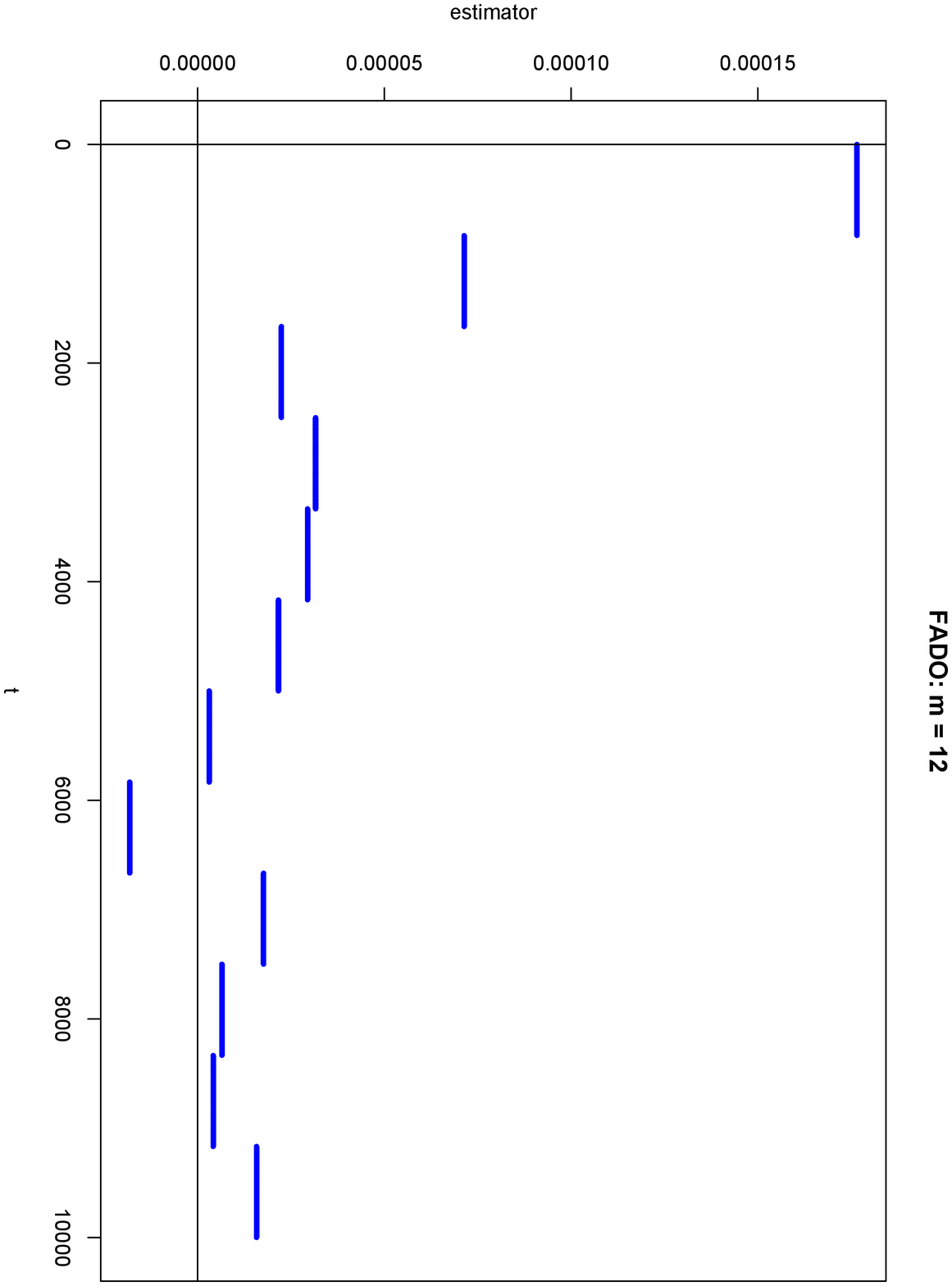}}
  \hspace{1cm}
  \subfloat{\includegraphics[angle=90,scale=0.25]{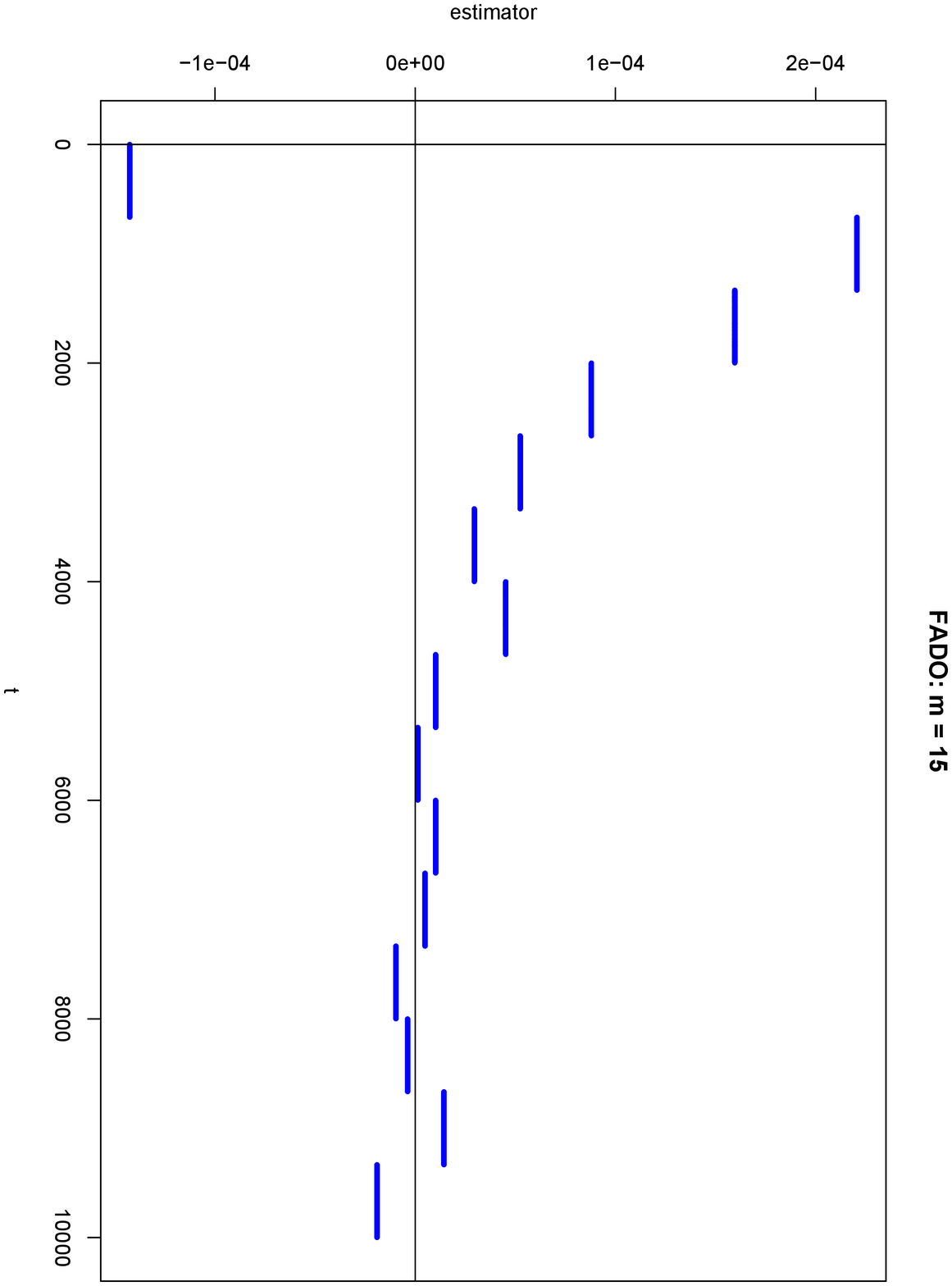}}
  \caption{{\small\emph{FADO estimators for both E.\,coli datasets: left: \texttt{tataat}; right: genes}}.}
  \label{fig9}
  \end{center}
\end{figure}

\begin{figure}[ht]
  \begin{center}
  \subfloat{\includegraphics[angle=90,scale=0.25]{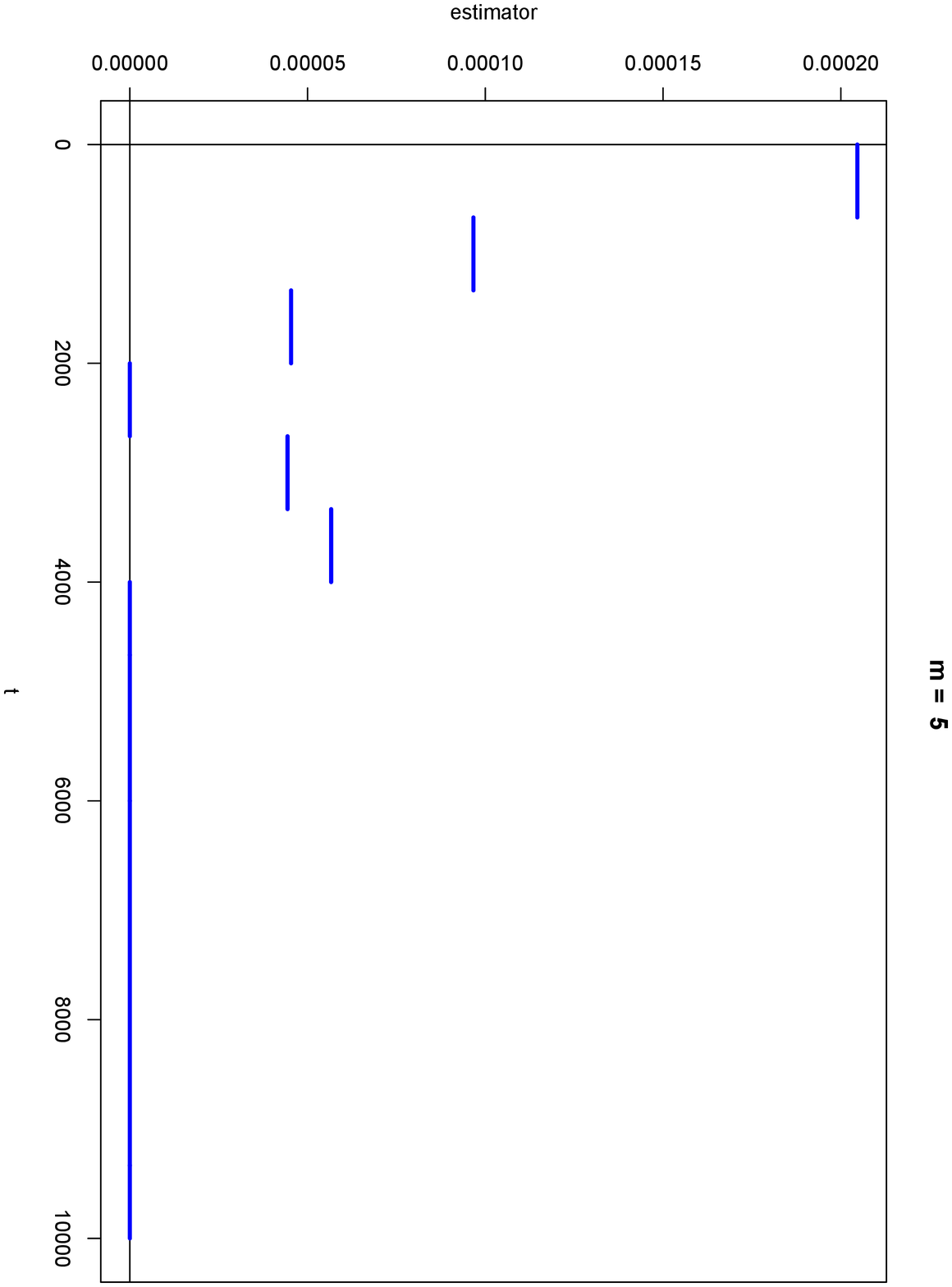}}
  \hspace{1cm}
  \subfloat{\includegraphics[angle=90,scale=0.25]{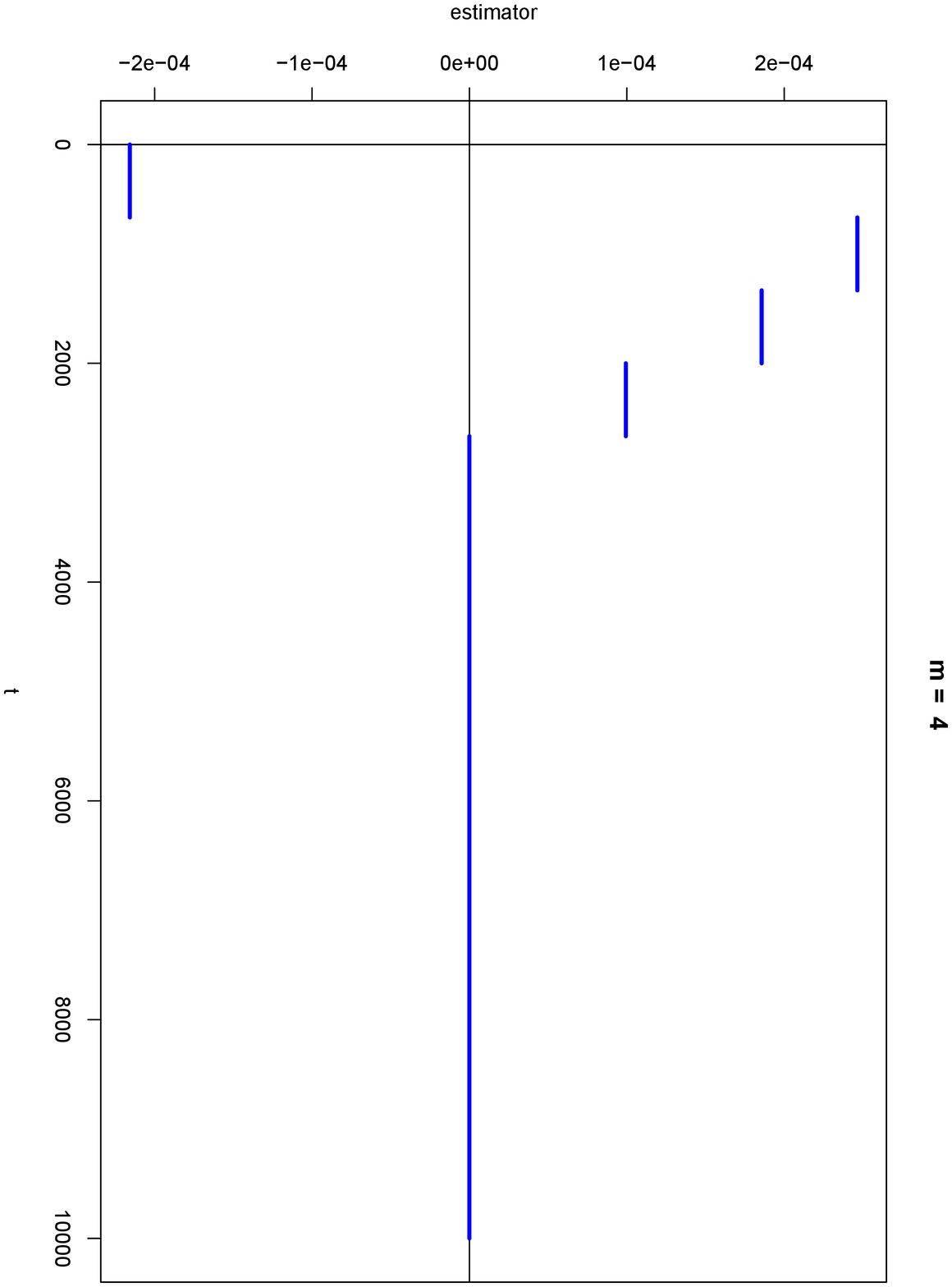}}
  \caption{{\small\emph{Islands estimators for both E.\,coli datasets: left: \texttt{tataat}; right: genes}}.}
  \label{fig10}
  \end{center}
\end{figure}

%%%%%%%%%%%%%%%%%%%%%%%%%%%%%%%%%%%%%
%%%%%%%%%%  5  Conclusion  %%%%%%%%%%
%%%%%%%%%%%%%%%%%%%%%%%%%%%%%%%%%%%%%
\section{Conclusion}

In our paper, we have investigated the dependencies between two given motifs.
A random thresholding procedure has been proposed in Section 2.2.
The general results of Section 2.3 have revealed the optimality of the procedure in the oracle and minimax setting.
Our theoretical results have been strengthened by simulations illustrating the robustness of our procedure, despite a calibration of parameters from a practical point of view that differs from the theoretical choice.
Section 4 has validated the procedure with a good detection of favored or avoided distances between occurrences of \texttt{tataat} and genes along the \emph{E.\,coli} genome.

Further extensions of our model could be investigated.
First, we could consider a more sophisticated model that takes into account the phenomenons of spontaneous apparition and self-excitation (as for the complete Hawkes model). But this model raises serious difficulties from the theoretical point of view. This is an exciting challenge to overcome them.
Secondly, we could extend our cascade algorithm to general wavelet bases and not only to Haar bases.
Finally, it is also relevant to study similar processes in the spatial framework and to connect them, for instance, to the Neymann-Scott process (see Section 6.3 of \cite{DVJ1}), which is a stimulating topic we wish to consider.

\medskip

\noindent \textbf{Acknowledgments:} The author wishes to thank Sophie Schbath for the two genomic data sets used in Section 4 and both her PhD advisors, Patricia Reynaud-Bouret and Vincent Rivoirard, for a wealth of smart advice and encouragement along this work.

%%%%%%%%%%  *  References  %%%%%%%%%%
\bibliographystyle{plain}
\bibliography{References}

%%%%%%%%%%%%%%%%%%%%%%%%%%%%%%%%%%%%%%%%%%%%%%%%%%%%%%%
%%%%%%%%%%  6  Appendix: Proof of Theorem 1  %%%%%%%%%%
%%%%%%%%%%%%%%%%%%%%%%%%%%%%%%%%%%%%%%%%%%%%%%%%%%%%%%%
\section{Appendix: Proof of \ref{thm1}}

In the sequel, the values of the constants $K, K', K_0, K_1, K_2, K_3, \ldots$ may change from line to line.
For the sake of clarity, the proofs are fully detailed in this appendix.

%%%%%%%%%%  6.1  A more general result  %%%%%%%%%%
\subsection{A more general result}

We first give a general result stated and proved in \cite{RBR}.

\begin{thm}[Theorem 2.2 of \cite{RBR}]  \label{thmRBR}
To estimate a countable family $\beta = (\beta_{\lambda})_{\lambda\in\Lambda}$, such that $\|\beta\|_{\ell_{2}} < \infty$, we assume that a family of coefficient estimators $(\hat{\beta}_{\lambda})_{\lambda\in\Gamma}$, where $\Gamma$ is a known deterministic subset of $\Lambda$, and a family of possibly random thresholds $(\eta_{\lambda})_{\lambda\in\Gamma}$ are available and we consider the thresholding rule
\[\tilde{\beta} = \left( \hat{\beta}_{\lambda} \1_{|\hat{\beta}_{\lambda}|\geq\eta_{\lambda}} \1_{\lambda\in\Gamma} \right)_{\lambda\in\Lambda}.\]
Let $\varepsilon > 0$ be fixed. Assume that there exist a deterministic family $(H_{\lambda})_{\lambda\in\Gamma}$ and three constants $\kappa\in[0;1[$, $\omega\in[0;1]$ and $\zeta > 0$ (that may depend on $\varepsilon$ but not on $\lambda$) with the following properties:
\begin{itemize}
  \item[(A1)] For all $\lambda$ in $\Gamma$,
      \[\P\left(|\hat{\beta}_{\lambda}-\beta_{\lambda}| > \kappa\eta_{\lambda}\right) \leq \omega.\]
  \item[(A2)] There exist $1 < p,q < \infty$ with $\frac{1}{p}+\frac{1}{q}=1$ and a constant $R > 0$ such that for all $\lambda$ in $\Gamma$, \[\left[\E\left(|\hat{\beta}_{\lambda}-\beta_{\lambda}|^{2p}\right)\right]^{\frac{1}{p}} \leq R \max \left\{H_{\lambda} , H_{\lambda}^{\frac{1}{p}} \varepsilon^{\frac{1}{q}}\right\}.\]
  \item[(A3)] There exists a constant $\theta$ such that for all $\lambda$ in $\Gamma$ such that $H_{\lambda} < \theta\varepsilon$, \[\P\left(|\hat{\beta}_{\lambda}-\beta_{\lambda}| > \kappa\eta_{\lambda} , |\hat{\beta}_{\lambda}| > \eta_{\lambda}\right) \leq H_{\lambda}\zeta.\]
\end{itemize}
Then the estimator $\tilde{\beta}$ satisfies
\[\frac{1-\kappa^{2}}{1+\kappa^{2}} \E\left(\|\tilde{\beta}-\beta\|_{\ell_{2}}^{2}\right) \leq \E\left(\inf_{m\subset\Gamma} \left\{\frac{1+\kappa^{2}}{1-\kappa^{2}} \sum_{\lambda\not\in m} \beta_{\lambda}^{2} + \frac{1-\kappa^{2}}{\kappa^{2}} \sum_{\lambda\in m} (\hat{\beta}_{\lambda}-\beta_{\lambda})^{2} + \sum_{\lambda\in m} \eta_{\lambda}^{2}\right\}\right) + L D \sum_{\lambda\in\Gamma} H_{\lambda},\]
with $L D = \frac{R}{\kappa^{2}} \big((1+\theta^{-1/q}) \omega^{1/q} + (1+\theta^{1/q}) \varepsilon^{1/q}\zeta^{1/q}\big)$.
\end{thm}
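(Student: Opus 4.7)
The plan is to prove the oracle inequality by a pointwise case-by-case analysis of $(\tilde{\beta}_\lambda - \beta_\lambda)^2$ for each $\lambda \in \Gamma$, separating the ``good event'' $\Omega_\lambda = \{|\hat{\beta}_\lambda - \beta_\lambda| \leq \kappa \eta_\lambda\}$ from its complement. I would start from the identity
\[\|\tilde{\beta} - \beta\|_{\ell_2}^2 = \sum_{\lambda \notin \Gamma} \beta_\lambda^2 + \sum_{\lambda \in \Gamma} (\tilde{\beta}_\lambda - \beta_\lambda)^2,\]
fix an arbitrary $m \subset \Gamma$, and split each summand via $(\tilde{\beta}_\lambda - \beta_\lambda)^2 = (\tilde{\beta}_\lambda - \beta_\lambda)^2 \1_{\Omega_\lambda} + (\tilde{\beta}_\lambda - \beta_\lambda)^2 \1_{\Omega_\lambda^c}$. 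On $\Omega_\lambda$ the bound will be purely deterministic and produces, once summed, the oracle portion of the right-hand side; on $\Omega_\lambda^c$ it is probabilistic and produces the residual $L D \sum_\lambda H_\lambda$.

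For the deterministic step, I would use the two geometric facts valid on $\Omega_\lambda$: $|\hat{\beta}_\lambda| < \eta_\lambda$ forces $|\beta_\lambda| \leq (1+\kappa)\eta_\lambda$, while $|\hat{\beta}_\lambda| \geq \eta_\lambda$ forces $|\beta_\lambda| \geq (1-\kappa)\eta_\lambda$ together with $(\hat{\beta}_\lambda - \beta_\lambda)^2 \leq \kappa^2 \eta_\lambda^2$. Combined with the algebraic inequality $\beta_\lambda^2 = (\hat{\beta}_\lambda - (\hat{\beta}_\lambda - \beta_\lambda))^2 \leq (1+\rho)\hat{\beta}_\lambda^2 + (1 + 1/\rho)(\hat{\beta}_\lambda - \beta_\lambda)^2$ with the carefully chosen $\rho = \tfrac{2\kappa^2}{1-\kappa^2}$ (so that $1+\rho = \tfrac{1+\kappa^2}{1-\kappa^2}$ and $1+1/\rho = \tfrac{1+\kappa^2}{2\kappa^2}$), these yield on $\Omega_\lambda$, for $\lambda \in m$,
\[(\tilde{\beta}_\lambda - \beta_\lambda)^2 \leq \tfrac{1+\kappa^2}{\kappa^2}(\hat{\beta}_\lambda - \beta_\lambda)^2 + \tfrac{1+\kappa^2}{1-\kappa^2} \eta_\lambda^2,\]
and for $\lambda \in \Gamma \setminus m$, $(\tilde{\beta}_\lambda - \beta_\lambda)^2 \leq \tfrac{(1+\kappa^2)^2}{(1-\kappa^2)^2} \beta_\lambda^2$ (this bound absorbs $\tfrac{\kappa^2}{(1-\kappa)^2}$, since $\kappa(1+\kappa) \leq 1+\kappa^2$). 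Summing these pointwise bounds, adding $\sum_{\lambda \notin \Gamma} \beta_\lambda^2 \leq \sum_{\lambda \notin m} \beta_\lambda^2$, and multiplying through by $\tfrac{1-\kappa^2}{1+\kappa^2}$ reproduces exactly the oracle term in the statement.

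For the residual step, decompose
\[(\tilde{\beta}_\lambda - \beta_\lambda)^2 \1_{\Omega_\lambda^c} = (\hat{\beta}_\lambda - \beta_\lambda)^2 \1_{\{|\hat{\beta}_\lambda| \geq \eta_\lambda\} \cap \Omega_\lambda^c} + \beta_\lambda^2 \1_{\{|\hat{\beta}_\lambda| < \eta_\lambda\} \cap \Omega_\lambda^c}.\]
For the kept-coefficient piece, Hölder's inequality with exponents $(p,q)$ gives $\E[\,\cdot\,] \leq R \max\{H_\lambda, H_\lambda^{1/p} \varepsilon^{1/q}\} \cdot \P(\cdot)^{1/q}$ via (A2); the probability is controlled by $\omega$ via (A1) when $H_\lambda \geq \theta \varepsilon$ (then $\max \leq H_\lambda (1 + \theta^{-1/q})$) and by the sharper $H_\lambda \zeta$ via (A3) when $H_\lambda < \theta \varepsilon$ (then $\max \leq H_\lambda^{1/p} \varepsilon^{1/q} (1 + \theta^{1/q})$, and $(H_\lambda \zeta)^{1/q} H_\lambda^{1/p} = H_\lambda \zeta^{1/q}$). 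Adding the two cases yields $R H_\lambda \bigl[(1 + \theta^{-1/q}) \omega^{1/q} + (1 + \theta^{1/q}) \varepsilon^{1/q} \zeta^{1/q}\bigr]$. For the thresholded piece, on $\Omega_\lambda^c \cap \{|\hat{\beta}_\lambda| < \eta_\lambda\}$ one has $\eta_\lambda < |\hat{\beta}_\lambda - \beta_\lambda|/\kappa$, hence $|\beta_\lambda| \leq (1 + 1/\kappa) |\hat{\beta}_\lambda - \beta_\lambda|$, which reduces the analysis to the first piece at the cost of a factor $\tfrac{(1+\kappa)^2}{\kappa^2}$, producing precisely the prefactor $\tfrac{1}{\kappa^2}$ in $L D$ after summation.

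The main obstacle is the bookkeeping in the deterministic step: identifying the unique exponent $\rho = \tfrac{2\kappa^2}{1-\kappa^2}$ that makes the $(1+\rho)(\cdot) + (1+1/\rho)(\cdot)$ split simultaneously produce the target coefficient $\tfrac{1+\kappa^2}{1-\kappa^2}$ on $\eta_\lambda^2$ and at most $\tfrac{1+\kappa^2}{\kappa^2}$ on $(\hat{\beta}_\lambda - \beta_\lambda)^2$, and then checking that the companion bound $\tfrac{(1+\kappa^2)^2}{(1-\kappa^2)^2}$ on $\beta_\lambda^2$ for $\lambda \in \Gamma \setminus m$ dominates $\tfrac{\kappa^2}{(1-\kappa)^2}$ for all $\kappa \in [0,1)$. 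A secondary subtlety in the residual step is that (A3) applies only to $\{|\hat{\beta}_\lambda| \geq \eta_\lambda\} \cap \Omega_\lambda^c$, which forces the ``thresholded but $|\beta_\lambda|$ large'' piece to first be reduced to the kept-coefficient case via the preceding elementary inequality before the dichotomy $H_\lambda \geq \theta \varepsilon$ vs $H_\lambda < \theta \varepsilon$ can be applied.
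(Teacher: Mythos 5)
First, a remark on scope: the paper does not prove this statement itself --- it is quoted verbatim as Theorem 2.2 of \cite{RBR} and used as a black box --- so there is no in-paper proof to compare against. Judging your attempt on its own merits: the deterministic step is correct and is essentially the standard argument (the choice $\rho=\frac{2\kappa^2}{1-\kappa^2}$ and the absorption of $\frac{\kappa^2}{(1-\kappa)^2}$ into $\frac{(1+\kappa^2)^2}{(1-\kappa^2)^2}$ both check out), but the residual step has a genuine gap.

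The problem is the ``thresholded piece'' $\beta_\lambda^2\,\1_{\{|\hat\beta_\lambda|<\eta_\lambda\}\cap\Omega_\lambda^c}$. You propose to bound $|\beta_\lambda|\leq(1+1/\kappa)\,|\hat\beta_\lambda-\beta_\lambda|$ and then ``reduce to the first piece.'' But the control of the first piece in the regime $H_\lambda<\theta\varepsilon$ rests on (A3), whose event is $\{|\hat\beta_\lambda|>\eta_\lambda\}\cap\Omega_\lambda^c$ --- disjoint from the event carrying the thresholded piece. On $\{|\hat\beta_\lambda|<\eta_\lambda\}\cap\Omega_\lambda^c$ the only probability bound available is $\omega$ from (A1), and H\"older then gives at best $R\,H_\lambda^{1/p}\varepsilon^{1/q}(1+\theta^{1/q})\,\omega^{1/q}$, which is not $O(H_\lambda)$ when $H_\lambda\ll\varepsilon$; summed over $\Gamma$ this produces $\sum_\lambda H_\lambda^{1/p}$ rather than $\sum_\lambda H_\lambda$, so the claimed residual $LD\sum_\lambda H_\lambda$ is not reached. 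The repair is structural rather than technical: the killed case should never enter the residual. For $\lambda\notin m$ it is absorbed deterministically, on every event, into the oracle term $\frac{(1+\kappa^2)^2}{(1-\kappa^2)^2}\beta_\lambda^2$; and for $\lambda\in m$ your own inequality $\beta_\lambda^2\leq\frac{1+\kappa^2}{1-\kappa^2}\hat\beta_\lambda^2+\frac{1+\kappa^2}{2\kappa^2}(\hat\beta_\lambda-\beta_\lambda)^2$, combined with $|\hat\beta_\lambda|<\eta_\lambda$, already yields the target bound without invoking $\Omega_\lambda$ at all. The only genuinely probabilistic contribution is then $(\hat\beta_\lambda-\beta_\lambda)^2\,\1_{\{|\hat\beta_\lambda|\geq\eta_\lambda\}\cap\Omega_\lambda^c}$, which is exactly where (A1)--(A3) apply and where your H\"older/dichotomy computation is correct. (As a minor point, the claim that the factor $\frac{(1+\kappa)^2}{\kappa^2}$ ``produces precisely the prefactor $\frac{1}{\kappa^2}$ in $LD$'' does not survive arithmetic scrutiny either, but this becomes moot once the killed case is removed from the residual.)
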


Using the previous theorem, we establish the following result that we will prove in Section 6.4.

\begin{thm}  \label{thm3}
Let $n \geq 1$, $j_0\in\N^*$, $\gamma > 0$ and $\Delta$ defined by (\ref{def_Delta}) and (\ref{def_d}).
Then the estimator $\tilde{\beta}$ defined by (\ref{def_tildebeta}) in Section 2.2 satisfies
\[\E\left(\|\tilde{\beta}-\beta\|_{\ell_{2}}^{2}\right) \leq C_1 \inf_{m \subset \Gamma} \Bigg\{\sum_{\lambda \not \in m} \beta_{\lambda}^{2} + F(j_0,n,T) |m|\Bigg\} + C_2 R \big(e^{-\kappa_1j_0\gamma/2} + e^{-\kappa_2n\|h\|_1/2}\big) 2^{j_0} ,\]
where $C_1$ is a positive constant depending on $\gamma$, $\|h\|_1$, $\|h\|_{\infty}$, $\|\psi\|_1$, $\|\psi\|_2$ and $\|\psi\|_{\infty}$,
$C_2$ is a positive constant depending on the compact support of $h$ and the compact support of $\psi$,
\[F(j_0,n,T) = \frac{j_0}{n} + \frac{j_0^{3/2}2^{j_0/2}}{n^{3/2}} + \frac{j_0^4 2^{j_0}}{n^2} + \frac{j_0^2}{T} + \frac{j_0^{3/2}2^{j_0/2}}{nT^{1/2}} + \frac{j_0 n}{T^2},\]
\[R= C_R \ \Bigg\{ \frac{1}{n}  + \frac{2^{j_0/2}}{n^{3/2}} + \frac{2^{j_0/2}}{nT^{1/2}} + \frac{n}{T^{2}}  \Bigg\},\]
with $C_R$ a positive constant depending on $\|h\|_1$, $\|h\|_{\infty}$, the compact support of $\psi$, $\|\psi\|_1$, $\|\psi\|_2$ and $\|\psi\|_{\infty}$
and $\kappa_1$ and $\kappa_2$ are absolute constants in $]0;1[$.
\end{thm}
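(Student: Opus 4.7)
The plan is to verify the three hypotheses (A1), (A2), (A3) of \ref{thmRBR} for the estimator family $(\hat{\beta}_\lambda)_{\lambda\in\Gamma}$ and the random threshold $\eta_\lambda(\gamma,\Delta)$, with $\varepsilon$ chosen of order $1/n + n/T^2$ (the scale of the variance bound in \ref{lemEST}) and $H_\lambda$ a deterministic envelope of the same order, up to geometric and logarithmic factors in $j_0$. Summing $H_\lambda$ over $\lambda\in\Gamma$ produces a contribution of order $2^{j_0} R$, which accounts for the final residual term in the statement. The infimum part of the conclusion, with the weight $F(j_0,n,T)|m|$, will follow from the fact that on the events controlled by the concentration step we have $\eta_\lambda^2 \lesssim F(j_0,n,T)$ uniformly on $\Gamma$, as can be read off directly from the definitions of $\widetilde{V}$, $B$ and $\Delta$.

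The first step is to decompose $\hat{\beta}_\lambda - \beta_\lambda$. Conditionally on $(U_1,\ldots,U_n)$, the aggregated process $N$ is Poisson with intensity $\sum_j h(\cdot-U_j)$; write $dM_t = dN_t - \sum_j h(t-U_j)\,dt$ for the compensated measure and $S(\f)(t) = \sum_i \f(t-U_i) - \frac{n-1}{n}\E_{\pi}(\f(t-U))$. Then
\[n(\hat{\beta}_\lambda - \beta_\lambda) = \int_\R S(\f)(t)\,dM_t + \left\{\int_\R S(\f)(t)\sum_j h(t-U_j)\,dt - n\beta_\lambda\right\}.\]
The first summand is a Poisson stochastic integral with integrand measurable with respect to the $U_i$'s; after expansion, the bracketed term splits into a linear statistic in the $U_i$'s plus a degenerate $U$-statistic of order two. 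This is the natural splitting to exploit the four tools the paper repeatedly cites: Bennett/Bernstein inequalities for Poisson integrals \cite{RBR}, moment bounds for Poisson stochastic integrals, and the exponential and moment inequalities for $U$-statistics of \cite{dPG}, \cite{GLZ}, \cite{HRB}.

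For (A1), apply first, conditionally on the parents, Bennett's inequality for Poisson integrals to the stochastic integral; this produces the two leading terms of $\eta_\lambda(\gamma,\Delta)$, with $\widetilde{V}(\f)/n^2$ playing the role of an observable upper bound on the true conditional variance (the square-root trick built into \eqref{def_tildeV}) and $B(\f)/n$ the $L^\infty$-bound of $S(\f)$. Second, Bernstein's inequality for the linear piece in the $U_i$'s together with the Houdré-Reynaud-Bouret inequality for the degenerate $U$-statistic piece produces the remaining summand $\Delta\, N_\R/n$, with $\Delta$ calibrated precisely to the scale of \eqref{def_Delta}-\eqref{def_d}. The moment bound (A2) is obtained in parallel through Rosenthal-type inequalities for Poisson integrals and the moment bounds for degenerate $U$-statistics of \cite{dPG} and \cite{GLZ}; the power $2p$ and the constants $R$ and $\varepsilon$ are chosen to be compatible with the tails $\omega$ and $\zeta$ that emerge from (A1) and (A3).

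The main obstacle is (A3), because the threshold is itself data-driven. Two facts must be reconciled: on the one hand, the event $\widetilde{V}(\f)/n^2 \geq \var_{U}(\hat{\beta}_\lambda)$ (conditional variance) must hold up to an exponentially small event, which is the very purpose of the correction terms $\sqrt{2\gamma j_0\hat{V}B^2}$ and $3\gamma j_0 B^2$ in \eqref{def_tildeV}; on the other hand, the additional condition $|\hat{\beta}_\lambda|>\eta_\lambda$ must be used to lower-bound $|\beta_\lambda|$ on the deviation event, thereby upgrading the crude tail $\omega$ produced in (A1) into the sharper tail $H_\lambda\zeta$ required by (A3). Both layers of randomness (Poisson children given parents, then the parents $U_i$) enter: the residual probabilities on the exceptional sets, summed over $\lambda\in\Gamma$, are what produce the two exponentials $e^{-\kappa_1 j_0\gamma/2}$ (deviations of the Poisson integral at threshold level $\gamma j_0$) and $e^{-\kappa_2 n\|h\|_1/2}$ (deviations of $N_\R$ around its mean $n\|h\|_1$, needed to control the $\Delta N_\R/n$ term), each inflated by the envelope $R$ and by $|\Gamma|\asymp 2^{j_0}$. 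Plugging the verified (A1)-(A2)-(A3) into \ref{thmRBR} and replacing $\sum_{\lambda\in m}\eta_\lambda^2$ by $|m|\sup_\Gamma \eta_\lambda^2 \lesssim |m| F(j_0,n,T)$ yields the announced oracle inequality.
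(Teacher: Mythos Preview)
Your overall architecture matches the paper's: the same Poisson-plus-$U$-statistic decomposition of $\hat\beta_\lambda-\beta_\lambda$, the same verification of (A1)--(A3) of \ref{thmRBR}, and the same toolkit (Bennett for the Poisson integral, Bernstein for the linear piece, Houdr\'e--Reynaud-Bouret for the degenerate $U$-statistic, Rosenthal and the \cite{dPG}/\cite{GLZ} moment bounds for (A2)). Two points, however, need correction.

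First, your treatment of (A3) is inverted. You call it ``the main obstacle'' and sketch an argument that uses $|\hat\beta_\lambda|>\eta_\lambda$ to sharpen the (A1) tail. The paper does nothing of the sort: it takes $H_\lambda=\1_{\lambda\in\Gamma}$, $\varepsilon=1$, and simply sets $\zeta=\omega$, so (A3) follows in one line from (A1) via $\P(A\cap B)\leq\P(A)$. The entire rate is carried by the constant $R$ in (A2), not by $H_\lambda$ or $\varepsilon$; your proposal to put the scale $1/n+n/T^2$ into both $\varepsilon$ and $H_\lambda$ is a different (and unnecessary) bookkeeping that you have not actually carried out. The factor $R\,2^{j_0}$ in the residual arises because $LD$ is proportional to $R\,\omega^{1/q}$ and $\sum_{\lambda\in\Gamma}H_\lambda=|\Gamma|\asymp 2^{j_0}$.

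Second, the term $F(j_0,n,T)$ does \emph{not} come from an almost-sure bound $\eta_\lambda^2\lesssim F(j_0,n,T)$ that ``can be read off directly from the definitions''. What is needed is a bound on $\E(\eta_\lambda^2)$, hence on $\E(\widetilde V(\f))$ and $\E(B^2(\f))$. The first reduces to $\E(V(\f))$, already controlled in \ref{lemEST}. The second is genuinely nontrivial: the paper bounds $\E(B(\f))$ via a VC-dimension argument (the class $\{[t-b,t-a]:t\in\R\}$ has VC dimension $\leq 2$, then Lemma~6.4 of \cite{Mas}), and upgrades to $\E(B^2(\f))$ with the moment inequality of \cite{BBLM}. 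Without this step the weight $F(j_0,n,T)$ is not obtained.
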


To obtain \ref{thm1}, we consider $n \geq 2$, we take $j_0$ the positive integer such that $2^{j_0} \leq n < 2^{j_0+1}$ and $\gamma > 2\log2$ in the previous theorem.
Therefore, we note that
\begin{align*}
F(j_0,n,T) & = \frac{j_0}{n} + \frac{j_0^{3/2}2^{j_0/2}}{n^{3/2}} + \frac{j_0^4 2^{j_0}}{n^2} + \frac{j_0^2}{T} + \frac{j_0^{3/2}2^{j_0/2}}{nT^{1/2}} + \frac{j_0 n}{T^2} \\
& \leq K \ \Bigg\{ \frac{\log{n}}{n} + \frac{(\log{n})^{3/2}n^{1/2}}{n^{3/2}} + \frac{(\log{n})^4 n}{n^2} + \frac{(\log{n})^2}{T} + \frac{(\log{n})^{3/2}n^{1/2}}{nT^{1/2}} + \frac{(\log{n}) n}{T^2} \Bigg\} \\
& \leq K \ \Bigg\{ \frac{\log{n}}{n} + \frac{(\log{n})^{3/2}}{n} + \frac{(\log{n})^4}{n} + \frac{(\log{n})^2}{T} + \frac{(\log{n})^{3/2}}{n^{1/2}T^{1/2}} + \frac{(\log{n}) n}{T^2} \Bigg\} \\
& \leq K \ \Bigg\{ \frac{(\log{n})^4}{n} + \frac{(\log{n}^2) n}{T^2} \Bigg\},
\end{align*}
by comparing all the terms of the right-hand side between them (for this, we distinguish the cases $n\leq T$ and $n\geq T$),
with $K$ an absolute positive constant (that changes from line to line) and
\[R \leq K' C_R \ \Bigg\{\frac{1}{n} + \frac{n}{T^{2}}\Bigg\},\]
with $K'$ an absolute positive constant.
Moreover, $e^{-\kappa_1j_0\gamma/2}2^{j_0}$ is bounded thanks to the choice of $\gamma$.

Finally, since
\[\|\tilde{h}-h\|_2^2 \leq K_0 \|\tilde{\beta}-\beta\|_{\ell_{2}}^{2},\]
with $K_0$ a positive constant depending only on the functions that generate the biorthogonal wavelet basis, we establish \ref{thm1}.

%%%%%%%%%%  6.2  Technical lemmas  %%%%%%%%%%
\subsection{Technical lemmas}

Before proving \ref{thm3}, we establish two lemmas which we will use throughout the proof.

\begin{lem} \label{lemINEQ} ~
\begin{itemize}
  \item[(a)] For any function $f$ in $\L_2(\R)$ and for all $t\in\R$, $\displaystyle \var_{\pi}(f(t-U)) \leq \frac{1}{T} \int_\R f^2(x) \,dx$, where \\ $\var_{\pi}(f(t-U))$ denotes the variance of $f(t-U)$ where $U \sim \pi$.
  \item[(b)] For any function $f$ in $\L_1(\R)$ and for all $t\in\R$, $\displaystyle \int_\R \E_{\pi}(f(t-U)) \,dt = \int_\R f(x) \,dx$.
  \item[(c)] For any nonnegative function $f$ in $\L_1(\R)$ and for all $t\in\R$, $\displaystyle \E_{\pi}(f(t-U)) \leq \frac{1}{T} \int_\R f(x) \,dx$.
\end{itemize}
\end{lem}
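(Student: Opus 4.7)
All three inequalities are elementary computations using the definition of $\pi$ as the uniform law on $[0;T]$, namely $\E_\pi(g(U))=\frac1T\int_0^T g(u)\,du$ for measurable $g$, combined with a change of variable and, for (b), Fubini's theorem. I would handle them one after the other.

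For (a), the plan is to bound the variance by the second moment: since $\var(X)\leq\E(X^2)$ for any square-integrable $X$, I have
\[\var_\pi(f(t-U)) \leq \E_\pi\bigl(f^2(t-U)\bigr) = \frac{1}{T}\int_0^T f^2(t-u)\,du.\]
Then the substitution $x=t-u$ and the fact that we integrate a nonnegative function over a subset of $\R$ give $\frac1T\int_0^T f^2(t-u)\,du \leq \frac1T\int_\R f^2(x)\,dx$, which is the claim.

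For (b), the idea is to swap the two integrals (this is Fubini, justified by $f\in\L_1(\R)$ and the positive measure $\frac{du}{T}$ on $[0;T]$):
\[\int_\R \E_\pi(f(t-U))\,dt = \int_\R \frac{1}{T}\int_0^T f(t-u)\,du\,dt = \frac{1}{T}\int_0^T \Bigl(\int_\R f(t-u)\,dt\Bigr)du.\]
The inner integral equals $\int_\R f(x)\,dx$ by translation invariance of the Lebesgue measure, so the whole expression collapses to $\int_\R f(x)\,dx$.

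For (c), the computation is the same as the first step of (a) but without squaring and using nonnegativity of $f$ instead of nonnegativity of $f^2$:
\[\E_\pi(f(t-U)) = \frac{1}{T}\int_0^T f(t-u)\,du \leq \frac{1}{T}\int_\R f(x)\,dx.\]
There is no real obstacle here; the only subtlety worth mentioning is to make sure Fubini applies in (b), which it does since $f$ is integrable and the domain $[0;T]\times\R$ carries a product $\sigma$-finite measure.
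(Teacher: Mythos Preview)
Your proof is correct and follows essentially the same approach as the paper: bound the variance by the second moment and extend the domain to $\R$ for (a), swap the integrals via Fubini and use translation invariance for (b), and extend the domain using nonnegativity for (c). If anything, you are slightly more explicit about the justification for Fubini in (b).
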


\begin{proof} ~
\begin{itemize}
  \item[(a)] Let $f\in\L_2(\R)$ and $t\in\R$.
      \[\var_{\pi}(f(t-U)) \leq \E_{\pi}(f^2(t-U)) = \frac{1}{T} \int_{0}^{T} f^2(t-u) \,du \leq \frac{1}{T} \int_\R f^2(x) \,dx.\]
  \item[(b)] Let $f\in\L_1(\R)$ and $t\in\R$.
      \[\int_\R \E_{\pi}(f(t-U)) \,dt = \E_{\pi}\left(\int_\R f(t-U) \,dt\right) = \E\left(\int_\R f(x) \,dx\right) = \int_\R f(x) \,dx.\]
  \item[(c)] Let $f\in\L_1(\R)$ such that $f \geq 0$ and $t\in\R$.
      \[\E_{\pi}(f(t-U)) = \frac{1}{T} \int_{0}^{T} f(t-u) \,du \leq \frac{1}{T} \int_\R f(x) \,dx.\]
\end{itemize}
\end{proof}

The next result is a Rosenthal type inequality for any Poisson process, that extends Lemma 6.2 of \cite{RBR}.

\begin{lem} \label{lemRIPP}
Let $p \geq 1$.
Consider a Poisson process $N$ on $(\mathbb{X},\mathcal{X})$ a measurable space, with a finite mean measure $\nu : \mathcal{X} \mapsto \R_+$ and a function $\varphi : \mathbb{X} \mapsto \R$ which belongs to $\L_{2p}(\nu)$.
We denote $\displaystyle \hat{\beta} = \int_{\mathbb{X}} \varphi(x) \,dN_x$ a natural estimator of $\displaystyle \beta = \int_{\mathbb{X}} \varphi(x) \,d\nu(x)$ that satisfies $\E(\hat{\beta}) = \beta$.
Then, there exists a positive constant $C(p)$ only depending on $p$ such that
\[\E(|\hat{\beta}-\beta|^{2p}) \leq C(p) \left(\int_\mathbb{X} |\varphi(x)|^{2p} \,d\nu(x) + \big(\var(\hat{\beta})\big)^p\right),\]
where $\displaystyle \var(\hat{\beta}) = \int_\mathbb{X} \varphi^2(x) \,d\nu(x)$.
\end{lem}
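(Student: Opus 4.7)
The plan is to reduce the estimate to the classical Rosenthal inequality for sums of independent centered random variables via a discretization of $\varphi$. Since $\nu$ is finite and $\varphi\in\L_{2p}(\nu)$, I can pick measurable partitions $(A_{i,m})_{1\leq i\leq k_m}$ of the support of $\nu$ and simple functions $\varphi_m=\sum_{i=1}^{k_m}a_{i,m}\1_{A_{i,m}}$ with $\varphi_m\to\varphi$ in $\L_{2p}(\nu)$, hence also in $\L_2(\nu)$ and $\L_1(\nu)$. For each $m$, by the independence property of the Poisson process,
\[\hat\beta_m-\beta_m=\sum_{i=1}^{k_m}a_{i,m}\bigl(N(A_{i,m})-\nu(A_{i,m})\bigr)\]
is a sum of independent centered random variables, each built from a $\mathrm{Poisson}(\nu(A_{i,m}))$ variable.

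For such sums, the classical Rosenthal inequality yields
\[\E\bigl[|\hat\beta_m-\beta_m|^{2p}\bigr]\leq C(p)\Bigl(\sum_i|a_{i,m}|^{2p}\E\bigl[|N(A_{i,m})-\nu(A_{i,m})|^{2p}\bigr]+\bigl(\sum_i a_{i,m}^2\,\nu(A_{i,m})\bigr)^{p}\Bigr).\]
The variance-type term is exactly $(\int\varphi_m^2\,d\nu)^{p}$. The key auxiliary ingredient is the centered absolute-moment bound for Poisson variables: for $Y\sim\mathrm{Poisson}(\lambda)$ and $p\geq 1$,
\[\E\bigl[|Y-\lambda|^{2p}\bigr]\leq C'(p)(\lambda+\lambda^{p}).\]
I would derive this from the cumulant expansion of the Poisson (all cumulants equal $\lambda$), which shows that $\E[(Y-\lambda)^{2p}]$ is a polynomial of degree $p$ in $\lambda$ with nonnegative coefficients, hence dominated by $C'(p)(\lambda+\lambda^p)$ on $[0,\infty)$. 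Plugging this bound in turns the first sum into $C'(p)\bigl(\sum_i|a_{i,m}|^{2p}\nu(A_{i,m})+\sum_i|a_{i,m}|^{2p}\nu(A_{i,m})^p\bigr)$; the first piece is $\int|\varphi_m|^{2p}\,d\nu$ by construction, and the second is absorbed by the variance term through the elementary inequality $\sum_i x_i^p\leq(\sum_i x_i)^p$ for $x_i\geq 0$ and $p\geq 1$, applied with $x_i=a_{i,m}^2\,\nu(A_{i,m})$.

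To conclude, I would pass to the limit $m\to\infty$. The right-hand side converges to the target bound thanks to the $\L_{2p}(\nu)$ and $\L_2(\nu)$ convergences of $\varphi_m$. For the left-hand side, either one applies the inequality already established for simple functions to the increments $\varphi_m-\varphi_{m'}$ to get a Cauchy sequence in $L^{2p}(\P)$, or one extracts an almost-surely convergent subsequence of $\hat\beta_m$ (using that $N$ has finitely many points a.s., since $\nu(\mathbb{X})<\infty$) and invokes Fatou's lemma. I expect the main obstacle to be the Poisson centered-moment bound above, which must be stated with the correct $\lambda+\lambda^p$ shape so that the elementary inequality can absorb the cross term; once that is in place, the remainder is essentially bookkeeping. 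The overall skeleton is exactly the one used for Lemma~6.2 of \cite{RBR}, which corresponds to the case where $(\mathbb{X},\nu)$ is the real line equipped with Lebesgue measure.
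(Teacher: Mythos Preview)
Your argument is correct and complete in outline; the Poisson centered-moment bound via cumulants and the absorption step $\sum_i x_i^p\leq(\sum_i x_i)^p$ are exactly what is needed, and the limiting step is handled cleanly (Fatou with the observation that $N$ has finitely many points is the quickest route).

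The paper takes a genuinely different route. Instead of partitioning the \emph{space} $\mathbb{X}$ and using the independence of Poisson counts on disjoint sets, the paper partitions the \emph{process}: by infinite divisibility, it writes $dN=\sum_{i=1}^k dN^i$ with the $N^i$ i.i.d.\ Poisson processes of mean measure $\nu/k$, applies Rosenthal to $Y_i=\int\varphi\,(dN^i-k^{-1}d\nu)$, and then sends $k\to\infty$. On the event $\Omega_k$ where every $N^i$ has at most one point (whose complement has probability $O(k^{-1})$), each $|Y_i|^\ell$ reduces to $|\varphi(T_i)|^\ell$ plus lower-order terms, so $\sum_i\E|Y_i|^\ell\to\int|\varphi|^\ell\,d\nu$ directly. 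This sidesteps your Poisson moment estimate $\E|Y-\lambda|^{2p}\leq C'(p)(\lambda+\lambda^p)$ entirely; the price is an extra truncation argument on $\Omega_k$ and a preliminary reduction to bounded $\varphi$. Your approach is more elementary and self-contained, at the cost of needing the explicit Poisson moment bound; the paper's approach is slightly more conceptual (it is really a Poisson analogue of the ``accompanying law'' idea) and produces the $\int|\varphi|^{2p}\,d\nu$ term without any algebra on Poisson moments. Both end with the same simple-function approximation step.
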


\begin{proof}
Let $p \geq 1$. Suppose $\|\varphi\|_{\infty} < +\infty$ first.
As a Poisson process is infinitely divisible, we can write: for any positive integer $k$, \[dN = \sum_{i=1}^{k} dN^i,\] where the $N^i$'s are mutually independent Poisson processes on $\mathbb{X}$ with mean measure $\nu / k$.
Hence, \[\hat{\beta}-\beta = \sum_{i=1}^{k} \int_{\mathbb{X}} \varphi(x) \,\big(dN^i_x - k^{-1}d\nu(x)\big) = \sum_{i=1}^{k} Y_i,\] where for any $i$, \[Y_i = \int_{\mathbb{X}} \varphi(x) \,\big(dN^i_x - k^{-1}d\nu(x)\big).\]
So the $Y_i$'s are i.i.d.\,centered variables, each of them has moments of order $2p$ and $2$.
We apply the classical Rosenthal's inequality (for instance, see Proposition 10.2 of \cite{HKPT}): there exists a positive constant $C(p)$ only depending on $p$ such that \[\E\left(\left|\sum_{i=1}^{k} Y_i\right|^{2p}\right) \leq C(p) \left(\sum_{i=1}^{k} \E(|Y_i|^{2p}) + \left(\sum_{i=1}^{k} \E(Y_i^2)\right)^p\right).\]
Now, we give an upper bound of the limit of $\displaystyle \E\left(\sum_{i=1}^{k} |Y_i|^\ell\right)$ for $\ell\in\{2p,2\}$ when $k \rightarrow \infty$.
Let us introduce \[\Omega_k = \big\{\forall i \in \{1,\ldots,k\}, N^i_{\mathbb{X}} \leq 1\big\},\] where $N^i_{\mathbb{X}}$ is the number of points of $N^i$ lying in $\mathbb{X}$.
Then,
\begin{align*}
\P(\Omega_k^c) & = \P(\exists i \in \{1,\ldots,k\}, N^i_{\mathbb{X}} \geq 2) \\
& \leq \sum_{i=1}^{k} \P(N^i_{\mathbb{X}} \geq 2) = k \sum_{j \geq 2} \frac{(\nu(\mathbb{X})/k)^j}{j!} e^{-\nu(\mathbb{X})/k} \\
& \leq k (\nu(\mathbb{X})/k)^2 = k^{-1}\nu(\mathbb{X})^2.
\end{align*}
On $\Omega_k$, if $N^i_{\mathbb{X}} = 0$ (so $\displaystyle \int_{\mathbb{X}} \varphi(x) \,dN^i_x = 0$), \[|Y_i|^\ell = O_k(k^{-\ell})\] and if $N^i_{\mathbb{X}} = 1$ (so $\displaystyle \int_{\mathbb{X}} \varphi(x) \,dN^i_x = \varphi(T)$, where $T$ is the point of the process $N^i$), \[|Y_i|^\ell = |\varphi(T)|^\ell + O_k(k^{-1}|\varphi(T)|^{\ell-1}).\]
Consequently,
\begin{equation}
\begin{split}
& \E\left(\sum_{i=1}^{k} |Y_i|^\ell\right) \\
& \leq \E\left[\1_{\Omega_k} \left(k O_k(k^{-\ell}) + \sum_{T \in N} \big[|\varphi(T)|^\ell + O_k(k^{-1}|\varphi(T)|^{\ell-1})\big]\right)\right] + \sqrt{\P(\Omega_k^c)} \sqrt{\E\left[\left(\sum_{i=1}^{k} |Y_i|^\ell\right)^2\right]}.
\end{split}  \label{pr-lemRIPP}
\end{equation}
But we have
\begin{align*}
\sum_{i=1}^{k} |Y_i|^\ell & \leq 2^{\ell-1} \sum_{i=1}^{k} \left[\left|\int_{\mathbb{X}} \varphi(x) \,dN^i_x\right|^\ell + \left(k^{-1}\int_{\mathbb{X}} |\varphi(x)| \,d\nu(x)\right)^\ell\right] \\
& \leq 2^{\ell-1} \left(\sum_{i=1}^{k} \|\varphi\|_\infty^\ell (N^i_\mathbb{X})^\ell + k\left(k^{-1}\int_{\mathbb{X}} |\varphi(x)| \,d\nu(x)\right)^\ell\right) \\
& \leq 2^{\ell-1} \left(\|\varphi\|_\infty^\ell N_\mathbb{X}^\ell + k\left(k^{-1}\int_{\mathbb{X}} |\varphi(x)| \,d\nu(x)\right)^\ell\right).
\end{align*}
Thus, when $k \rightarrow \infty$, the last term in (\ref{pr-lemRIPP}) converges to $0$ since a Poisson variable has moments of every order and
\[\limsup_{k \rightarrow \infty} \E\left(\sum_{i=1}^{k} |Y_i|^\ell\right) \leq \E\left(\int_\mathbb{X} |\varphi(x)|^\ell \,dN_x\right) = \int_\mathbb{X} |\varphi(x)|^\ell \,d\nu(x),\] which concludes the proof in the bounded case.

\noindent But for any function $\varphi$ such that $\int_\mathbb{X} |\varphi(x)|^{2p} \,d\nu(x) < +\infty$, the desired upper bound is finite and we get it by approximating $\varphi$ by, for instance, piecewise constant functions.
\end{proof}

%%%%%%%%%%  6.3  Proof of Lemma 2.1  %%%%%%%%%%
\subsection{Proof of \ref{lemEST}}

Let $\lambda\in\Lambda$ be fixed. $G(\f)$, defined by (\ref{def_est}), is a measurable function of the observations and by considering the aggregated process (\ref{def_aggrproc}), we can write
\begin{align*}
G(\f) & = \int_\R \sum_{i=1}^{n} \left[\f(t-U_i) - \frac{n-1}{n} \E_{\pi}(\f(t-U))\right] \,dN_{t} \\
& = \int_\R \sum_{i=1}^{n} \f(t-U_i) \,dN_{t} - (n-1) \int_\R \E_{\pi}(\f(t-U)) \,dN_{t} \\
& = \sum_{1 \leq i,j \leq n} \int_\R \f(t-U_i) \,dN^{j}_{t} - \sum_{1 \leq i \neq j \leq n} \int_\R \E_{\pi}(\f(t-U)) \,dN^{j}_{t} \\
& = \sum_{i=1}^{n} \left[\int_\R \f(t-U_i) \,dN^{i}_{t} + \sum_{j \neq i} \int_\R \big[\f(t-U_i)-\E_{\pi}(\f(t-U))\big] \,dN^{j}_{t}\right].
\end{align*}

Now, we prove the first part of \ref{lemEST}. We have
\begin{align*}
& \E(G(\f)|U_1,\ldots,U_n) \\
& = \sum_{i=1}^{n} \left[\int_\R \f(t-U_i)h(t-U_i) \,dt + \sum_{j \neq i} \int_\R \big[\f(t-U_i)-\E_{\pi}(\f(t-U))\big] h(t-U_j) \,dt\right].
\end{align*}
Write $x=t-U_i$ in the first integral. Therefore, \[\E(G(\f)|U_1,\ldots,U_n) = n \int_\R \f(x)h(x) \,dx + W(\f),\] where
\[W(\f) = \sum_{1 \leq i \neq j \leq n} \int_\R \big[\f(t-U_i)-\E_{\pi}(\f(t-U))\big] h(t-U_j) \,dt.\]
Moreover,
\begin{align*}
\E(W(\f)) & = \sum_{1 \leq i \neq j \leq n} \int_\R \E\bigg(\big[\f(t-U_i)-\E_{\pi}(\f(t-U))\big] h(t-U_j)\bigg) \,dt \\
& = \sum_{1 \leq i \neq j \leq n} \int_\R \E\big[\f(t-U_i)-\E_{\pi}(\f(t-U))\big] \E(h(t-U_j)) \,dt \\
& = 0.
\end{align*}
Finally, \[\E(G(\f)) = n \int_\R \f(x)h(x) \,dx,\]
i.e.\,$\hat{\beta}_{\lambda}$ is an unbiased estimator for $\beta_{\lambda}$:
\[\E(\hat{\beta}_{\lambda}) = \E\left(\frac{G(\f)}{n}\right) = \int_\R \f(x)h(x) \,dx = \beta_{\lambda}.\]

\bigskip

It remains to control the variance of the estimator $\hat{\beta}_{\lambda}$.
\begin{align*}
\var(G(\f)) & = \E\left[\left(G(\f) - n \int_\R \f(x)h(x) \,dx\right)^2\right] \\
& = \E\big[(G(\f) - \E(G(\f)|U_1,\ldots,U_n) + W(\f))^2\big] \\
& = \E(V(\f)) + \E(W(\f)^2),
\end{align*}
where \[V(\f) = \var(G(\f)|U_1,\ldots,U_n).\]

We start by dealing with the first term by using technics for Poisson processes. We have
\begin{align*}
V(\f) & = \int_\R \left(\sum_{i=1}^{n} \left[\f(t-U_i) - \frac{n-1}{n} \E_{\pi}(\f(t-U))\right]\right)^2 \sum_{j=1}^{n} h(t-U_j) \,dt \\
& = \int_\R \sum_{j=1}^{n} \left(\f(t-U_j) + \sum_{i \neq j} \big[\f(t-U_i) - \E_{\pi}(\f(t-U))\big]\right)^2 h(t-U_j) \,dt \\
& = \sum_{j=1}^{n} \int_\R \f^2(t-U_j) h(t-U_j) \,dt \\
& \quad + 2\sum_{j=1}^{n} \int_\R \sum_{i \neq j} \big[\f(t-U_i) - \E_{\pi}(\f(t-U))\big] \f(t-U_j) h(t-U_j) \,dt \\
& \quad + \sum_{j=1}^{n} \int_\R \sum_{i \neq j} \sum_{k \neq j} \big[\f(t-U_i) - \E_{\pi}(\f(t-U))\big] \big[\f(t-U_k) - \E_{\pi}(\f(t-U))\big] h(t-U_j) \,dt.
\end{align*}
In the first integral, write $x=t-U_j$. So,
\begin{equation}
\begin{split}
V(\f) & = n \int_\R \f^2(x) h(x) \,dx + 2\sum_{j=1}^{n} \int_\R \sum_{i \neq j} \big[\f(t-U_i) - \E_{\pi}(\f(t-U))\big] \f(t-U_j) h(t-U_j) \,dt \\
& \quad + \sum_{j=1}^{n} \int_\R \sum_{i \neq j} \sum_{k \neq j} \big[\f(t-U_i) - \E_{\pi}(\f(t-U))\big] \big[\f(t-U_k) - \E_{\pi}(\f(t-U))\big] h(t-U_j) \,dt.
\end{split}  \label{def_V}
\end{equation}
Each term can be computed by taking the expectation conditionally to $U_j$ (in each sum) and we obtain
\begin{align}
\E(V(\f)) & = n \int_\R \f^2(x) h(x) \,dx + \sum_{j=1}^{n} \int_\R \sum_{i \neq j} \E\bigg(\big[\f(t-U_i) - \E_{\pi}(\f(t-U))\big]^2\bigg) \E(h(t-U_j)) \,dt \nonumber\\
& = n \int_\R \f^2(x) h(x) \,dx + n(n-1) \int_\R \var_{\pi}(\f(t-U)) \E_{\pi}(h(t-U)) \,dt.  \label{def_EV}
\end{align}
Then using equations (a) and (b) of \ref{lemINEQ}, we have
\begin{equation}
\E(V(\f)) \leq n \int_\R \f^2(x) h(x) \,dx + \frac{n(n-1)}{T} \int_\R \f^2(x) \,dx \int_\R h(x) \,dx.  \label{pr-lemEST_PP}
\end{equation}

\medskip

Now, we deal with the second term by using the $U$-statistics technics. However, $W(\f)$ is a $U$-statistics of order 2 but it is not degenerate. So we write
\begin{equation}
W(\f)=W_1(\f)+W_2(\f),  \label{def_W}
\end{equation}
with
\begin{align*}
W_1(\f) & = \sum_{1 \leq i \neq j \leq n} \int_\R \big[\f(t-U_i)-\E_{\pi}(\f(t-U))\big] \E_{\pi}(h(t-U)) \,dt \\
& = (n-1) \sum_{i=1}^{n} \int_\R \big[\f(t-U_i)-\E_{\pi}(\f(t-U))\big] \E_{\pi}(h(t-U)) \,dt
\end{align*}
and
\[W_2(\f) = \sum_{1 \leq i \neq j \leq n} g(U_i,U_j),\]
where \[g(U_i,U_j) = \int_\R \big[\f(t-U_i)-\E_{\pi}(\f(t-U))\big] \big[h(t-U_j)-\E_{\pi}(h(t-U))\big] \,dt.\]
$W_2(\f)$ is a degenerate $U$-statistics. It is easy to verify that \[\E(W(\f)^2) = \E(W_1(\f)^2) + \E(W_2(\f)^2).\]

First we compute $\E(W_1(\f)^2)$.
\begin{align}
\E(W_1(\f)^2) & = \var(W_1(\f)) \nonumber\\
& = n(n-1)^2 \var \left(\int_\R \big[\f(t-U_1)-\E_{\pi}(\f(t-U))\big] \E_{\pi}(h(t-U)) \,dt\right) \nonumber\\
& = n(n-1)^2 \var \left(\int_\R \f(t-U_1) \E_{\pi}(h(t-U)) \,dt\right) \nonumber\\
& \leq n(n-1)^2 \E \left[\left(\int_\R |\f(t-U_1)| \E_{\pi}(h(t-U)) \,dt\right)^2\right] \nonumber\\
& \leq \frac{n(n-1)^2}{T^2} \E\left[\left(\int_\R |\f(t-U_1)| \,dt\right)^2\right] \left(\int_\R h(x) \,dx\right)^2 \nonumber\\
& \leq \frac{n(n-1)^2}{T^2} \left(\int_\R |\f(x)| \,dx\right)^2 \left(\int_\R h(x) \,dx\right)^2,  \label{pr-lemEST_W1}
\end{align}
by applying inequality (c) of \ref{lemINEQ} with $f=h$.

It remains to compute $\E(W_2(\f)^2)$. It is easy to see that
\begin{align*}
\E(W_2(\f)^2) & = \sum_{1 \leq i \neq j \leq n} \E\big[g(U_i,U_j) (g(U_i,U_j)+g(U_j,U_i))\big] \leq 2\sum_{1 \leq i \neq j \leq n} \E\left[g(U_i,U_j)^2\right] \\
& \leq 2n(n-1) \E\left[\left(\int_\R \big[\f(t-U_1)-\E_{\pi}(\f(t-U))\big] \big[h(t-U_2)-\E_{\pi}(h(t-U))\big] \,dt\right)^2\right].
\end{align*}
We denote $\E_{(U,V) \sim \pi \otimes \pi}(g(U,V))$ the expectation of $g(U,V)$ where $U \sim \pi$ and $V \sim \pi$ are independent and $f^X(t) = f(t-X)$. Hence,
\begin{align*}
& \E(W_2(\f)^2) \\
& \leq 2n(n-1) \E_{(U,V) \sim \pi \otimes \pi}\left[\left(\int_\R \big[\f^U(t)-\E_{\pi}(\f^U(t))\big] \big[h^V(t)-\E_{\pi}(h^V(t))\big] \,dt\right)^2\right] \\
& \leq 2n(n-1) \E_{(U,V) \sim \pi \otimes \pi}\Bigg[\Bigg(\int_\R \f^U(t) h^V(t) \,dt - \E_{V \sim \pi}\left(\int_\R \f^U(t) h^V(t) \,dt\right) \\
& \hspace{11em} - \E_{U \sim \pi}\left(\int_\R \f^U(t) h^V(t) \,dt\right) + \E_{(U,V) \sim \pi \otimes \pi}\left(\int_\R \f^U(t) h^V(t) \,dt\right)\Bigg)^2\Bigg] \\
& \leq 2n(n-1) \Bigg\{ \E_{(U,V) \sim \pi \otimes \pi}\left[\left(\int_\R \f^U(t) h^V(t) \,dt\right)^2\right] - \E_{U \sim \pi}\left[\left(\E_{V \sim \pi}\left(\int_\R \f^U(t) h^V(t) \,dt\right)\right)^2\right] \\
& \hspace{6em} - \E_{V \sim \pi}\left[\left(\E_{U \sim \pi}\left(\int_\R \f^U(t) h^V(t) \,dt\right)\right)^2\right] + \left(\E_{(U,V) \sim \pi \otimes \pi}\left(\int_\R \f^U(t) h^V(t) \,dt\right)\right)^2 \Bigg\} \\
& \leq 2n(n-1) \Bigg\{ \E_{(U,V) \sim \pi \otimes \pi}\left[\left(\int_\R \f^U(t) h^V(t) \,dt\right)^2\right] + \left(\E_{(U,V) \sim \pi \otimes \pi}\left(\int_\R \f^U(t) h^V(t) \,dt\right)\right)^2 \Bigg\}.
\end{align*}
But,
\begin{align*}
\E_{(U,V) \sim \pi \otimes \pi}\left[\left(\int_\R \f^U(t) h^V(t) \,dt\right)^2\right] & \leq \E_{(U,V) \sim \pi \otimes \pi}\left(\int_\R (\f^U)^2(t) h^V(t) \,dt \int_\R h^V(t) \,dt\right) \\
& = \E_{(U,V) \sim \pi \otimes \pi}\left(\int_\R (\f^U)^2(t) h^V(t) \,dt\right) \int_\R h(x) \,dx \\
& = \int_\R \E_{\pi}\big((\f^U)^2(t)\big) \E_{\pi}(h^V(t)) \,dt \int_\R h(x) \,dx \\
& \leq \frac{1}{T} \int_\R \f^2(x) \,dx \left(\int_\R h(x) \,dx\right)^2
\end{align*}
and
\[\left|\E_{(U,V) \sim \pi \otimes \pi}\left(\int_\R \f^U(t) h^V(t) \,dt\right)\right| = \left|\int_\R \E_{\pi}(\f^U(t)) \E_{\pi}(h^V(t)) \,dt\right| \leq \frac{1}{T} \int_\R |\f(x)| \,dx \int_\R h(x) \,dx,\]
by using \ref{lemINEQ}.
So,
\begin{equation}
\E(W_2(\f)^2) \leq 2n(n-1) \Bigg\{ \frac{1}{T} \int_\R \f^2(x) \,dx \left(\int_\R h(x) \,dx\right)^2 + \frac{1}{T^2} \left(\int_\R |\f(x)| \,dx\right)^2 \left(\int_\R h(x) \,dx\right)^2 \Bigg\}.  \label{pr-lemEST_W2}
\end{equation}

\medskip

Finally, by combining inequalities (\ref{pr-lemEST_PP}), (\ref{pr-lemEST_W1}) and (\ref{pr-lemEST_W2}), we obtain the following control of the variance of the estimator $\hat{\beta}_{\lambda}$:
\begin{align*}
\var(\hat{\beta}_{\lambda}) & = \var\left(\frac{G(\f)}{n}\right) \\
& \leq \frac{1}{n} \int_\R \f^2(x) h(x) \,dx + \frac{1}{T} \int_\R \f^2(x) \,dx \int_\R h(x) \,dx \\
& \quad + \frac{n}{T^2} \left(\int_\R |\f(x)| \,dx\right)^2 \left(\int_\R h(x) \,dx\right)^2 + \frac{2}{T} \int_\R \f^2(x) \,dx \left(\int_\R h(x) \,dx\right)^2 \\
& \leq \frac{1}{n} \int_\R \f^2(x) h(x) \,dx + \frac{1}{T} \|\f\|_2^2 \|h\|_1 + \frac{n}{T^2} \|\f\|_1^2 \|h\|_1^2 + \frac{2}{T} \|\f\|_2^2 \|h\|_1^2.
\end{align*}
By using the properties of the biorthogonal wavelet bases considered in this paper, for any $\lambda=(j,k)$ in $\Lambda$, we have: $\|\f\|_1 \leq 2^{-j/2} \max(\sqrt{2}/2,\|\psi\|_1)$ and $\|\f\|_2 \leq \max(1,\|\psi\|_2)$, which allows us to get the purposed upper bound in \ref{lemEST}.

%%%%%%%%%%  6.4  Proof of Theorem 3  %%%%%%%%%%
\subsection{Proof of \ref{thm3}}

In the sequel, we will consider: $n \geq 1$, $T \geq 1$ and $j_0 = O(n)$ and we will use following notations:
$M_{h,1} = \max(\|h\|_1,1)$,
$M_{h,\infty} = \max(\|h\|_\infty,1)$,
$M_{\psi,1} = \max(\|\psi\|_1,\sqrt{2}/2)$,
$M_{\psi,2} = \max(\|\psi\|_2,1)$ and
$M_{\psi,\infty} = \max(\|\psi\|_\infty,\sqrt{2})$
(so that, for any $\lambda=(j,k)\in\Lambda$, we have: $\|\f\|_1 \leq 2^{-j/2}M_{\psi,1}$, $\|\f\|_2 \leq M_{\psi,2}$ and $\|\f\|_\infty \leq 2^{j/2}M_{\psi,\infty}$).

We recall that $A$ and $M$ are positive real numbers such that $h$ and $\psi$ are compactly supported in $[-A;A]$ and in $[-M;M]$ respectively.

Now, to prove \ref{thm3}, we apply \ref{thmRBR} and for this purpose we have to verify Assumptions: (A1), (A2) and (A3).

%%%%%%%%%%  6.4.1  Proof of Assumption (A1)  %%%%%%%%%%
\subsubsection{Proof of Assumption (A1)}

Let $\lambda \in \Gamma$ be fixed. Remember that conditionally to the $U_i$'s, the expression given in (\ref{def_aggrproc}) is a Poisson process. We apply Lemma 6.1 of \cite{RBR}: for any $\alpha > 0$, with probability larger than $1-2e^{-\alpha}$, conditionally to the $U_i$'s, we have \[\left|G(\f) - n \int_\R \f(x)h(x) \,dx - W(\f)\right| \leq \sqrt{2\alpha V(\f)} + \frac{\alpha}{3} B(\f),\]
where $W(\f)$ is defined by (\ref{def_W}), $V(\f) = \var(G(\f)|U_1,\ldots,U_n)$ and
\[B(\f) = \left\|\sum_{i=1}^{n} \left[\f(\cdot-U_i) - \frac{n-1}{n} \E_{\pi}(\f(\cdot-U))\right]\right\|_{\infty}.\]
Unlike $B(\f)$, $V(\f)$ is non-observable (it depends on the unknown function $h$). This is the reason why, by fixing $\alpha > 0$,  we estimate $V(\f)$ by
\[\widetilde{V}(\f) = \hat{V}(\f) + \sqrt{2\alpha\hat{V}(\f)B^2(\f)} + 3\alpha B^2(\f)\]
where
\[\hat{V}(\f) = \int_\R \left(\sum_{i=1}^{n} \left[\f(t-U_i) - \frac{n-1}{n} \E_{\pi}(\f(t-U))\right]\right)^2 \,dN_{t}.\]
Moreover, by Lemma 6.1 of \cite{RBR}, we have also: $\P(V(\f) \geq \widetilde{V}(\f)) \leq e^{-\alpha}$. So, with probability larger than $1-3e^{-\alpha}$,
\begin{equation}
\left|G(\f) - n \int_\R \f(x)h(x) \,dx\right| \leq \sqrt{2\alpha \widetilde{V}(\f)} + \frac{\alpha}{3} B(\f) + |W(\f)|.  \label{A1_PP}
\end{equation}

\medskip

We provide a control in probability of $W_1(\f)$.
\[W_1(\f) = (n-1) \sum_{i=1}^{n} \int_\R \big[\f(t-U_i)-\E_{\pi}(\f(t-U))\big] \E_{\pi}(h(t-U)) \,dt.\]
This is a sum of i.i.d.\,random variables. We apply Bernstein's inequality (for instance, see Proposition 2.9 of \cite{Mas}) to get that with probability larger than $1-2e^{-\alpha}$,
\[|W_1(\f)| \leq  \sqrt{2\alpha v(\f)} + \frac{\alpha}{3} b(\f),\]
with
\[v(\f) = \var(W_1(\f)) \leq \frac{n(n-1)^2}{T^2} \left(\int_\R |\f(x)| \,dx\right)^2 \left(\int_\R h(x) \,dx\right)^2\]
(see inequality (\ref{pr-lemEST_W1})) and
\begin{align*}
b(\f) & = (n-1) \sup_{u \in [0;T]}\left|\int_\R \big[\f(t-u)-\E_{\pi}(\f(t-U))\big] \E_{\pi}(h(t-U)) \,dt\right| \\
& \leq \frac{2(n-1)}{T} \int_\R |\f(x)| \,dx \int_\R h(x) \,dx,
\end{align*}
using equations (b) and (c) of \ref{lemINEQ}. Then, with probability larger than $1-2e^{-\alpha}$,
\begin{equation}
|W_1(\f)| \leq  \frac{\sqrt{2\alpha n}(n-1)}{T} \int_\R |\f(x)| \,dx \int_\R h(x) \,dx + \frac{2\alpha(n-1)}{3T} \int_\R |\f(x)| \,dx \int_\R h(x) \,dx.  \label{A1_W1}
\end{equation}

Now it remains to control $W_2(\f)$, with
\[W_2(\f) = \sum_{1 \leq i \neq j \leq n} g(U_i,U_j),\] where
\[g(U_i,U_j) = \int_\R \big[\f(t-U_i)-\E_{\pi}(\f(t-U))\big] \big[h(t-U_j)-\E_{\pi}(h(t-U))\big] \,dt.\]
This is a degenerate $U$-statistics of order 2, we can rewrite it as
\[W_2(\f) = \sum_{1 \leq j < i \leq n} \mathcal{G}(U_i,U_j),\] where \[\mathcal{G}(U_i,U_j) = g(U_i,U_j) + g(U_j,U_i).\]
We apply Theorem 3.4 of \cite{HRB} to $W_2$ and $-W_2$ (keeping the same notations of \cite{HRB}): for all $\varepsilon_0 > 0$ ($\varepsilon_0 = 1$ for instance), with probability larger than $1 - 2 \times 2.77 e^{-\alpha}$,
\begin{equation}
|W_2(\f)| \leq 2(1+\varepsilon_0)^{3/2}C\sqrt{\alpha} + \eta(\varepsilon_0)D\alpha + \beta(\varepsilon_0)B\alpha^{3/2} + \gamma(\varepsilon_0)A\alpha^2,  \label{A1_W2}
\end{equation}
where
\begin{itemize}
  \item $A = \|\mathcal{G}\|_{\infty}$ and by applying equality (b) of \ref{lemINEQ} with $f=h$, we easily have
      \begin{equation}
      A \leq 8 \|\f\|_{\infty} \int_\R h(x) \,dx,  \label{A1_W2-A}
      \end{equation}
  \item $C^2 = \E(W_2(\f)^2)$ and with (\ref{pr-lemEST_W2}), we have
      \begin{equation}
      C^2 \leq 2n(n-1) \Bigg\{ \frac{1}{T} \int_\R \f^2(x) \,dx \left(\int_\R h(x) \,dx\right)^2 + \frac{1}{T^2} \left(\int_\R
      |\f(x)|\,dx\right)^2 \left(\int_\R h(x) \,dx\right)^2 \Bigg\},  \label{A1_W2-C}
      \end{equation}
  \item $\displaystyle D = \sup\left\{\E\left(\sum_{1 \leq j < i \leq n} \mathcal{G}(U_i,U_j)a_i(U_i)b_j(U_j)\right) : \E\left(\sum_{i=2}^{n} a_i(U_i)^2\right) \leq 1, \E\left(\sum_{j=1}^{n-1} b_j(U_j)^2\right) \leq 1\right\}$,
      \begin{align*}
      D & = \sup\Bigg\{\E\left(\sum_{i=2}^{n} \sum_{j=1}^{i-1} g(U_i,U_j)a_i(U_i)b_j(U_j) + \sum_{j=1}^{n-1} \sum_{i=j+1}^{n}
      g(U_j,U_i)a_i(U_i)b_j(U_j)\right) : \\
      & \hspace{15em} \E\left(\sum_{i=2}^{n} a_i(U_i)^2\right) \leq 1, \E\left(\sum_{j=1}^{n-1} b_j(U_j)^2\right) \leq 1\Bigg\}.
      \end{align*}
      But, with the conditions on the $a_i$'s and the $b_j$'s, we have:
      \begin{align*}
      & \E\left(\sum_{i=2}^{n} \sum_{j=1}^{i-1} g(U_i,U_j)a_i(U_i)b_j(U_j)\right) \\
      & \leq \int_\R \E\left(\sum_{i=2}^{n} \big|\f(t-U_i)-\E_{\pi}(\f(t-U))\big| |a_i(U_i)| \right) \E\left(\sum_{j=1}^{n-1}
      \big|h(t-U_j)-\E_{\pi}(h(t-U))\big| |b_j(U_j)|\right) \,dt \\
      & \leq \int_\R \sqrt{(n-1)\var_{\pi}(\f(t-U))} \ \E\left(\sum_{j=1}^{n-1} \big|h(t-U_j)-\E_{\pi}(h(t-U))\big| |b_j(U_j)|\right) \,dt \\
      & \leq \sqrt{\frac{n-1}{T} \int_\R \f^2(x) \,dx} \ \E\left(\sum_{j=1}^{n-1} \int_\R \big|h(t-U_j)-\E_{\pi}(h(t-U))\big| |b_j(U_j)|
      \,dt\right) \\
      & \leq \sqrt{\frac{n-1}{T} \int_\R \f^2(x) \,dx} \ \E\left(2\sum_{j=1}^{n-1} |b_j(U_j)| \int_\R h(x) \,dx\right) \\
      & \leq 2(n-1) \sqrt{\frac{1}{T} \int_\R \f^2(x) \,dx} \int_\R h(x) \,dx,
      \end{align*}
      using equations (a) and (b) of \ref{lemINEQ}. Inverting the $a_i$'s and the $b_j$'s, the same computations apply to the second term and we
      obtain
      \begin{equation}
      D \leq 4(n-1) \sqrt{\frac{1}{T} \int_\R \f^2(x) \,dx} \int_\R h(x) \,dx,  \label{A1_W2-D}
      \end{equation}
  \item $\displaystyle B^2 = \max\left\{\sup_{u,i}\left(\sum_{j=1}^{i-1} \E(\mathcal{G}(u,U_j)^2)\right),\sup_{u,j}\left(\sum_{i=j+1}^{n} \E(\mathcal{G}(U_i,u)^2)\right)\right\}$ and since $\mathcal{G}$ is symmetric, we have:
      \begin{align*}
      B^2 & = \sup_{u,i}\left(\sum_{j=1}^{i-1} \E(\mathcal{G}(u,U_j)^2)\right) \\
      & = \sup_{u}\left(\sum_{j=1}^{n-1} \E(\mathcal{G}(u,U_j)^2)\right) \\
      & \leq 2\sup_{u}\left(\sum_{j=1}^{n-1} \big[\E(g(u,U_j)^2)+\E(g(U_j,u)^2)\big]\right).
      \end{align*}
      But,
      \begin{align*}
      & \E(g(u,U_j)^2) \\
      & = \E\left[\left(\int_\R \big[\f(t-u)-\E_{\pi}(\f(t-U))\big] \big[h(t-U_j)-\E_{\pi}(h(t-U))\big] \,dt\right)^2\right] \\
      & \leq \E\left[\int_\R \big[\f(t-u)-\E_{\pi}(\f(t-U))\big]^2 \big|h(t-U_j)-\E_{\pi}(h(t-U))\big| \,dt \int_\R
      \big|h(t-U_j)-\E_{\pi}(h(t-U))\big| \,dt\right] \\
      & \leq 2 \E\left[\int_\R \big[\f(t-u)-\E_{\pi}(\f(t-U))\big]^2 \big|h(t-U_j)-\E_{\pi}(h(t-U))\big| \,dt\right] \int_\R h(x) \,dx \\
      & \leq \frac{4}{T} \int_\R \big[\f(t-u)-\E_{\pi}(\f(t-U))\big]^2 \,dt \left(\int_\R h(x) \,dx\right)^2 \\
      & \leq \frac{16}{T} \int_\R \f^2(x) \,dx \left(\int_\R h(x) \,dx\right)^2
      \end{align*}
      and in the same way
      \begin{align*}
      & \E(g(U_j,u)^2) \\
      & = \E\left[\left(\int_\R \big[\f(t-U_j)-\E_{\pi}(\f(t-U))\big] \big[h(t-u)-\E_{\pi}(h(t-U))\big] \,dt\right)^2\right] \\
      & \leq \E\left[\int_\R \big[\f(t-U_j)-\E_{\pi}(\f(t-U))\big]^2 \big|h(t-u)-\E_{\pi}(h(t-U))\big| \,dt \int_\R
      \big|h(t-u)-\E_{\pi}(h(t-U))\big| \,dt\right] \\
      & \leq 2 \E\left[\int_\R \big[\f(t-U_j)-\E_{\pi}(\f(t-U))\big]^2 \big|h(t-u)-\E_{\pi}(h(t-U))\big| \,dt\right] \int_\R h(x) \,dx \\
      & \leq \frac{2}{T} \int_\R \big|h(t-u)-\E_{\pi}(h(t-U))\big| \,dt \int_\R \f^2(x) \,dx \int_\R h(x) \,dx \\
      & \leq \frac{4}{T} \int_\R \f^2(x) \,dx \left(\int_\R h(x) \,dx\right)^2,
      \end{align*}
      by using \ref{lemINEQ}. Hence,
      \begin{equation}
      B^2 \leq \frac{40(n-1)}{T} \int_\R \f^2(x) \,dx \left(\int_\R h(x) \,dx\right)^2.  \label{A1_W2-B}
      \end{equation}
\end{itemize}

\medskip

Finally, by inequalities (\ref{A1_PP}), (\ref{A1_W1}) and (\ref{A1_W2}) combined with (\ref{A1_W2-A}), (\ref{A1_W2-B}), (\ref{A1_W2-C}) and (\ref{A1_W2-D}), we obtain: for any $\varepsilon_0 > 0$, with probability larger than $1-(5+2\times2.77)e^{-\alpha}$,
\begin{align*}
|\hat{\beta}_{\lambda} - \beta_{\lambda}| & \leq \sqrt{2\alpha \widetilde{V}\left(\frac{\f}{n}\right)} + \frac{\alpha}{3} B\left(\frac{\f}{n}\right) \\
& \quad + \Bigg\{ \frac{\sqrt{2\alpha n}}{T} \|\f\|_1 + \frac{2\alpha}{3T} \|\f\|_1 + 2(1+\varepsilon_0)^{3/2}\sqrt{2\alpha} \sqrt{\frac{1}{T} \|\f\|_2^2 + \frac{1}{T^2} \|\f\|_1^2} \\
& \hspace{5em} + 4\eta(\varepsilon_0)\alpha \sqrt{\frac{1}{T} \|\f\|_2^2} + \beta(\varepsilon_0)\alpha^{3/2}  \sqrt{\frac{40}{nT} \|\f\|_2^2} + \frac{8}{n}\gamma(\varepsilon_0)\alpha^2  \|\f\|_{\infty} \Bigg\} \|h\|_1 \\
& \leq \sqrt{2 \alpha \widetilde{V}\left(\frac{\f}{n}\right)} + \frac{\alpha}{3} B\left(\frac{\f}{n}\right) \\
& \quad + \Bigg\{ 2 M_{\psi,1} \frac{\sqrt{\alpha n}}{T}  + 2\sqrt{2} M_{\psi,1} \frac{\alpha}{3T} + 2\sqrt{2}(1+\varepsilon_0)^{3/2} M_{\psi,2} \sqrt{\frac{\alpha}{T}} + 4(1+\varepsilon_0)^{3/2} M_{\psi,1} \frac{\sqrt{\alpha}}{T} \\
& \hspace{5em} + 4\eta(\varepsilon_0) M_{\psi,2} \frac{\alpha}{\sqrt{T}} + \sqrt{40}\beta(\varepsilon_0) M_{\psi,2} \frac{\alpha^{3/2}}{\sqrt{nT}} + 8\gamma(\varepsilon_0) M_{\psi,\infty} \frac{2^{j_0/2}\alpha^2}{n} \Bigg\} \|h\|_1,
\end{align*}
because $2^{-j/2} \leq \sqrt{2}$ if $-1 \leq j \leq j_0$. We denote $b$ the quantity between braces above.

This upper bound depends on $h$ (via $\|h\|_1$) and this potential threshold could not be used for applications because $h$ is unknown. So we overestimate $\|h\|_1$ by $\frac{(1+\varepsilon_0)N_{\R}}{n}$ and we have a threshold that does not depend on $h$. So, for any value of $\kappa \in ]0;1[$, by fixing $\alpha=\kappa^2j_0\gamma$ with $\gamma>0$, we define for all $\lambda$ in $\Gamma$,
\[\eta_\lambda(\gamma,\Delta) = \sqrt{2j_0\gamma \widetilde{V}\left(\frac{\f}{n}\right)} + \frac{j_0\gamma}{3} B\left(\frac{\f}{n}\right) + \Delta \frac{N_\R}{n},\]
where
\begin{align*}
\Delta & = (1+\varepsilon_0) \Bigg\{ 2 M_{\psi,1} \frac{\sqrt{j_0\gamma n}}{T} + 2\sqrt{2} M_{\psi,1} \frac{j_0\gamma}{3T} + 2\sqrt{2}(1+\varepsilon_0)^{3/2} M_{\psi,2} \sqrt{\frac{j_0\gamma}{T}} + 4(1+\varepsilon_0)^{3/2} M_{\psi,1} \frac{\sqrt{j_0\gamma}}{T} \\
& \hspace{5em} + 4\eta(\varepsilon_0) M_{\psi,2} \frac{j_0\gamma}{\sqrt{T}} + \sqrt{40}\beta(\varepsilon_0) M_{\psi,2} \frac{j_0^{3/2}\gamma^{3/2}}{\sqrt{nT}} + 8\gamma(\varepsilon_0) M_{\psi,\infty} \frac{2^{j_0/2}j_0^2\gamma^2}{n} \Bigg\}.
\end{align*}
Thus, for all $\lambda$ in $\Gamma$,
\begin{align*}
& \P\big(|\hat{\beta}_{\lambda} - \beta_{\lambda}| > \kappa\eta_\lambda(\gamma,\Delta)\big) \\
& \leq \P\left(|\hat{\beta}_{\lambda} - \beta_{\lambda}| > \sqrt{2\alpha \widetilde{V}\left(\frac{\f}{n}\right)} + \frac{\alpha}{3} B\left(\frac{\f}{n}\right) + b \frac{(1+\varepsilon_0)N_{\R}}{n} \,,\, \frac{(1+\varepsilon_0)N_{\R}}{n} > \|h\|_1\right) \\
& \quad + \P\left(|\hat{\beta}_{\lambda} - \beta_{\lambda}| > \sqrt{2\alpha \widetilde{V}\left(\frac{\f}{n}\right)} + \frac{\alpha}{3} B\left(\frac{\f}{n}\right) + b \frac{(1+\varepsilon_0)N_{\R}}{n} \,,\, \frac{(1+\varepsilon_0)N_{\R}}{n} \leq \|h\|_1\right) \\
& \leq \P\left(|\hat{\beta}_{\lambda} - \beta_{\lambda}| > \sqrt{2\alpha \widetilde{V}\left(\frac{\f}{n}\right)} + \frac{\alpha}{3} B\left(\frac{\f}{n}\right) + b \|h\|_1\right) + \P\left(\frac{(1+\varepsilon_0)N_{\R}}{n} \leq \|h\|_1\right) \\
& \leq (5+2\times2.77)e^{-\alpha} + \P\left(\frac{(1+\varepsilon_0)N_{\R}}{n} \leq \|h\|_1\right),
\end{align*}
with
\begin{align*}
\P\left(\frac{(1+\varepsilon_0)N_{\R}}{n} \leq \|h\|_1\right) & = \P\left(N_{\R} - n\|h\|_1 \leq - \frac{\varepsilon_0 n\|h\|_1}{1+\varepsilon_0}\right) \\
& \leq \exp{(-g(\varepsilon_0) n\|h\|_1)},
\end{align*}
using Proposition 7 of \cite{Rey2} with $g(\varepsilon_0) = \frac{1}{1+\varepsilon_0} \left(\log{\frac{1}{1+\varepsilon_0}}-1\right) + 1$.

Therefore, Assumption (A1) is true if we take $\omega=(5+2\times2.77)e^{-\kappa^2j_0\gamma} + \exp{(-g(\varepsilon_0) n\|h\|_1)}$,
with $\gamma>0$ and $\varepsilon_0>0$. Furthermore, the threshold (\ref{def_threshold}) that lies at the heart of the paper is achieved by rewriting $\Delta$ by grouping the constants into one:
\begin{equation}
\Delta = d(\gamma,\|\psi\|_1,\|\psi\|_2,\|\psi\|_\infty) \left\{\frac{j_0^2 2^{j_0/2}}{n} + \frac{j_0}{\sqrt{T}} + \frac{\sqrt{j_0n}}{T}\right\}  \label{def_Delta}
\end{equation}
with
\begin{equation}
\begin{split}
& d(\gamma,\|\psi\|_1,\|\psi\|_2,\|\psi\|_\infty) \\
& = (1+\varepsilon_0) \Bigg\{ 2 \sqrt{\gamma} M_{\psi,1} + \frac{2\sqrt{2}}{3} \gamma M_{\psi,1} + 2\sqrt{2}(1+\varepsilon_0)^{3/2} \sqrt{\gamma} M_{\psi,2} + 4(1+\varepsilon_0)^{3/2} \sqrt{\gamma} M_{\psi,1} \\
& \hspace{14em} + 4\eta(\varepsilon_0) \gamma M_{\psi,2} + \sqrt{40}\beta(\varepsilon_0) \gamma^{3/2} M_{\psi,2} + 8\gamma(\varepsilon_0) \gamma^2 M_{\psi,\infty} \Bigg\},
\end{split}  \label{def_d}
\end{equation}
where $\beta(\varepsilon_0)$, $\gamma(\varepsilon_0)$ and $\eta(\varepsilon_0)$ are defined in \cite{HRB} with $\varepsilon_0=1$.

%%%%%%%%%%  6.4.2  Proof of Assumption (A2)  %%%%%%%%%%
\subsubsection{Proof of Assumption (A2)}

Let $\lambda \in \Gamma$ be fixed. For any $p \geq 1$,
\begin{align}
\E(|\hat{\beta}_{\lambda} - \beta_{\lambda}|^{2p}) & = \E\left(\left|G\left(\frac{\f}{n}\right) - \int_\R \f(x)h(x) \,dx\right|^{2p}\right) \nonumber\\
& = \frac{1}{n^{2p}} \E\left(\big|G(\f) - \E(G(\f)|U_1,\ldots,U_n) + W(\f)\big|^{2p}\right) \nonumber\\
& \leq \frac{2^{2p-1}}{n^{2p}} \big[\E(|G(\f) - \E(G(\f)|U_1,\ldots,U_n)|^{2p}) + \E(|W(\f)|^{2p})\big].  \label{A2_general}
\end{align}
Now, let us give an upper bound of each term of the right-hand side of the previous inequality.

We first study the first term of (\ref{A2_general}). We have: \[\E(|G(\f) - \E(G(\f)|U_1,\ldots,U_n)|^{2p}) = \E\big[\E(|G(\f) - \E(G(\f)|U_1,\ldots,U_n)|^{2p}) \,|\, U_1,\ldots,U_n)\big]\] and conditionally to the $U_i$'s, $N$ is a Poisson process. We apply \ref{lemRIPP}: for any $p \geq 1$, there exists a positive constant $C(p)$ only depending on $p$ such that
\begin{equation}
\begin{split}
& \E(|G(\f) - \E(G(\f)|U_1,\ldots,U_n)|^{2p}) \,|\, U_1,\ldots,U_n) \\
& \leq C(p) \left(\int_\R \left|\sum_{i=1}^{n} \left[\f(t-U_i) - \frac{n-1}{n} \E_{\pi}(\f(t-U))\right]\right|^{2p} \sum_{j=1}^{n} h(t-U_j) \,dt + V(\f)^p\right).
\end{split}  \label{A2_PP}
\end{equation}

\medskip

On the one hand, we provide a control in expectation of the first term of (\ref{A2_PP}). We have:
\begin{align*}
& \E\left(\int_\R \left|\sum_{i=1}^{n} \left[\f(t-U_i) - \frac{n-1}{n} \E_{\pi}(\f(t-U))\right]\right|^{2p} \sum_{j=1}^{n} h(t-U_j) \,dt\right) \\
& = \E\left(\int_\R \sum_{j=1}^{n} \left|\f(t-U_j) + \sum_{i\neq j} \big[\f(t-U_i) - \E_{\pi}(\f(t-U))\big]\right|^{2p} h(t-U_j) \,dt\right) \\
& \leq 2^{2p-1} \Bigg[\E\left(\int_\R \sum_{j=1}^{n} |\f(t-U_j)|^{2p} h(t-U_j) \,dt\right) \\
& \hspace{4em} + \E\left(\int_\R \sum_{j=1}^{n} \left|\sum_{i\neq j} \big[\f(t-U_i) - \E_{\pi}(\f(t-U))\big]\right|^{2p} h(t-U_j) \,dt\right)\Bigg],
\end{align*}
with
\[\E\left(\int_\R \sum_{j=1}^{n} |\f(t-U_j)|^{2p} h(t-U_j) \,dt\right) = n \int_\R |\f(x)|^{2p} h(x) \,dx \leq n \int_\R \f^2(x) \,dx \|\f\|_{\infty}^{2p-2} \|h\|_{\infty}\]
and
\begin{align*}
& \E\left(\int_\R \sum_{j=1}^{n} \left|\sum_{i\neq j} \big[\f(t-U_i) - \E_{\pi}(\f(t-U))\big]\right|^{2p} h(t-U_j) \,dt\right) \\
& = \sum_{j=1}^{n} \int_\R \E\left(\left|\sum_{i\neq j} \big[\f(t-U_i) - \E_{\pi}(\f(t-U))\big]\right|^{2p}\right) \E(h(t-U_j)) \,dt \\
& = n \int_\R \E\left(\left|\sum_{i=1}^{n-1} \big[\f(t-U_i) - \E_{\pi}(\f(t-U))\big]\right|^{2p}\right) \E_{\pi}(h(t-U)) \,dt.
\end{align*}
By applying Rosenthal's inequality, there exists a positive constant $C(p)$ only depending on $p$ such that
\begin{align*}
& \E\left(\left|\sum_{i=1}^{n-1} \big[\f(t-U_i) - \E_{\pi}(\f(t-U))\big]\right|^{2p}\right) \\
& \leq C(p) \left((n-1) \E\left(\big|\f(t-U_1) - \E_{\pi}(\f(t-U))\big|^{2p}\right) + (n-1)^p \big[\var_{\pi}(\f(t-U))\big]^p\right).
\end{align*}
But,
\begin{align*}
\E\left(\big|\f(t-U_1) - \E_{\pi}(\f(t-U))\big|^{2p}\right) & \leq K \ \left( \E_{\pi}(|\f(t-U)|^{2p}) + |\E_{\pi}(\f(t-U))|^{2p} \right) \\
& \leq K \ \left( \frac{1}{T} \int_0^T |\f(t-u)|^{2p} \,du + \left|\frac{1}{T} \int_0^T \f(t-u) \,du\right|^{2p} \right) \\
& \leq K \ \left( \frac{1}{T} \int_\R \f^2(x) \,dx \ \|\f\|_{\infty}^{2p-2} + \frac{1}{T^{2p}} \left(\int_\R |\f(x)| \,dx\right)^{2p} \right),
\end{align*}
with $K$ a positive constant only depending on $p$ and using inequality (a) of \ref{lemINEQ},
\[\var_{\pi}(\f(t-U)) \leq \frac{1}{T} \int_\R \f^2(x) \,dx.\]
Thus,
\begin{align*}
& \E\left(\int_\R \sum_{j=1}^{n} \left|\sum_{i\neq j} \big[\f(t-U_i) - \E_{\pi}(\f(t-U))\big]\right|^{2p} h(t-U_j) \,dt\right) \\
& \leq n \ K \ \left(\frac{n-1}{T} \|\f\|_2^2 \|\f\|_{\infty}^{2p-2} + \frac{n-1}{T^{2p}} \|\f\|_1^{2p} + \frac{(n-1)^p}{T^p} \|\f\|_2^{2p}\right) \int_\R \E_{\pi}(h(t-U)) \,dt \\
& \leq K \ \left(\frac{n^2}{T} \|\f\|_2^2 \|\f\|_{\infty}^{2p-2} + \frac{n^2}{T^{2p}} \|\f\|_1^{2p} + \frac{n^{p+1}}{T^p} \|\f\|_2^{2p}\right) \|h\|_1,
\end{align*}
using equation (b) of \ref{lemINEQ}.

Therefore, we have the following control of the first term of (\ref{A2_PP})
\begin{equation}
\begin{split}
& \E\left(\int_\R \left|\sum_{i=1}^{n} \left[\f(t-U_i) - \frac{n-1}{n} \E_{\pi}(\f(t-U))\right]\right|^{2p} \sum_{j=1}^{n} h(t-U_j) \,dt\right) \\
& \leq K \ \Bigg[n \|\f\|_2^2 \|\f\|_{\infty}^{2p-2} \|h\|_{\infty} + \left(\frac{n^2}{T} \|\f\|_2^2 \|\f\|_{\infty}^{2p-2} + \frac{n^2}{T^{2p}} \|\f\|_1^{2p} + \frac{n^{p+1}}{T^p} \|\f\|_2^{2p}\right) \|h\|_1\Bigg],
\end{split}  \label{A2_PP-1st}
\end{equation}

\medskip

Now let us provide a control in expectation of the second term of (\ref{A2_PP}), i.e\,$V(\f)^p$. First, we recall that $V(\f) = \var(G(\f)|U_1,\ldots,U_n)$ and we remark that $\E(V(\f)^p) \leq \big[\E(V(\f)^{2p})\big]^{1/2}$ (using the Cauchy-Schwarz inequality). So, we focus on the moments of $V(\f)$ of any order $m \geq 2$.

Let $m \geq 2$. According to the expression (\ref{def_V}) of $V(\f)$, we have:
\begin{align*}
& V(\f) \\
& = n \int_\R \f^2(x) h(x) \,dx + 2\sum_{j=1}^{n} \int_\R \sum_{i\neq j} \big[\f(t-U_i) - \E_{\pi}(\f(t-U))\big] \f(t-U_j) h(t-U_j) \,dt \\
& \quad + \sum_{j=1}^{n} \int_\R \sum_{i\neq j} \sum_{k\neq j} \big[\f(t-U_i) - \E_{\pi}(\f(t-U))\big] \big[\f(t-U_k) - \E_{\pi}(\f(t-U))\big] h(t-U_j) \,dt \\
& = n \int_\R \f^2(x) h(x) \,dx + 2\sum_{j=1}^{n} \int_\R \sum_{i\neq j} \big[\f(t-U_i) - \E_{\pi}(\f(t-U))\big] \E_{\pi}(\f(t-U) h(t-U))) \,dt \\
& \quad + 2\sum_{j=1}^{n} \int_\R \sum_{i\neq j} \big[\f(t-U_i) - \E_{\pi}(\f(t-U))\big] \big[\f(t-U_j) h(t-U_j) - \E_{\pi}(\f(t-U) h(t-U))\big] \,dt \\
& \quad + \sum_{j=1}^{n} \int_\R \sum_{i\neq j} \sum_{k\neq j} \big[\f(t-U_i) - \E_{\pi}(\f(t-U))\big] \big[\f(t-U_k) - \E_{\pi}(\f(t-U))\big] \E_{\pi}(h(t-U)) \,dt \\
& \quad + \sum_{j=1}^{n} \int_\R \sum_{i\neq j} \sum_{k\neq j} \big[\f(t-U_i) - \E_{\pi}(\f(t-U))\big] \big[\f(t-U_k) - \E_{\pi}(\f(t-U))\big] \\
& \hspace{26em} \times \big[h(t-U_j) - \E_{\pi}(h(t-U))\big] \,dt.
\end{align*}
This formula provides a decomposition of $V(\f)$ in a sum of degenerate $U$-statistics of order 0, 1, 2 and 3. Indeed
\[V(\f) = \mathcal{W}_0(\f) + \mathcal{W}_1(\f) + \mathcal{W}_2(\f) + \mathcal{W}_3(\f),\]
with $\mathcal{W}_i(\f)$ is a degenerate $U$-statistic of order $i$ defined as follows:
\begin{align*}
\mathcal{W}_3(\f) & = \sum_{1 \leq i \neq j \neq k \leq n} \int_\R \big[\f(t-U_i) - \E_{\pi}(\f(t-U))\big] \big[\f(t-U_k) - \E_{\pi}(\f(t-U))\big] \\
& \hspace{25em} \times \big[h(t-U_j) - \E_{\pi}(h(t-U))\big] \,dt,
\end{align*}
\begin{align*}
& \mathcal{W}_2(\f) \\
& = 2\sum_{1 \leq i \neq j \leq n} \int_\R \big[\f(t-U_i) - \E_{\pi}(\f(t-U))\big] \big[\f(t-U_j) h(t-U_j) - \E_{\pi}(\f(t-U) h(t-U))\big] \,dt \\
& \quad + (n-2) \sum_{1 \leq i \neq k \leq n} \int_\R \big[\f(t-U_i) - \E_{\pi}(\f(t-U))\big] \big[\f(t-U_k) - \E_{\pi}(\f(t-U))\big] \E_{\pi}(h(t-U)) \,dt \\
& \quad  + \sum_{1 \leq i \neq j \leq n} \int_\R \bigg[\big[\f(t-U_i) - \E_{\pi}(\f(t-U))\big]^2 - \var_{\pi}(\f(t-U))\bigg] \big[h(t-U_j) - \E_{\pi}(h(t-U))\big] \,dt,
\end{align*}
\begin{align*}
\mathcal{W}_1(\f) & = 2(n-1) \sum_{i=1}^{n} \int_\R \big[\f(t-U_i) - \E_{\pi}(\f(t-U))\big] \E_{\pi}(\f(t-U) h(t-U)) \,dt \\
& \quad + (n-1) \sum_{i=1}^{n} \int_\R \bigg[\big[\f(t-U_i) - \E_{\pi}(\f(t-U))\big]^2 - \var_{\pi}(\f(t-U))\bigg] \E_{\pi}(h(t-U)) \,dt \\
& \quad + (n-1) \sum_{j=1}^{n} \int_\R \var_{\pi}(\f(t-U)) \big[h(t-U_j) - \E_{\pi}(h(t-U))\big] \,dt
\end{align*}
and
\begin{align}
\mathcal{W}_0(\f) & = n \int_\R \f^2(x) h(x) \,dx + n(n-1) \int_\R \var_{\pi}(\f(t-U)) \E_{\pi}(h(t-U)) \,dt \nonumber\\
& = \E(V(\f)) \leq n \int_\R \f^2(x) h(x) \,dx + \frac{n(n-1)}{T} \int_\R \f^2(x) \,dx \int_\R h(x) \,dx,  \label{A2_PP-2nd:W0}
\end{align}
by using (\ref{def_EV}) and (\ref{pr-lemEST_PP}).

First, we are interested in the moments of $\mathcal{W}_1(\f)$ that we write: \[\mathcal{W}_1(\f) = \mathcal{W}_{1,1}(\f) + \mathcal{W}_{1,2}(\f) + \mathcal{W}_{1,3}(\f),\]
with:
\begin{itemize}
  \item $\displaystyle \mathcal{W}_{1,1}(\f) = 2(n-1) \sum_{i=1}^{n} \int_\R \big[\f(t-U_i) - \E_{\pi}(\f(t-U))\big] \E_{\pi}(\f(t-U) h(t-U)) \,dt$.
\end{itemize}
We have:
\begin{align*}
& \E(|\mathcal{W}_{1,1}(\f)|^{m}) \\
& = 2^m (n-1)^m \E\left(\left|\sum_{i=1}^{n} \int_\R \big[\f(t-U_i) - \E_{\pi}(\f(t-U))\big] \E_{\pi}(\f(t-U) h(t-U)) \,dt\right|^m\right) \\
& \leq 2^m (n-1)^m \times C(m) \Bigg(n\E\left(\left|\int_\R \big[\f(t-U_1) - \E_{\pi}(\f(t-U))\big] \E_{\pi}(\f(t-U) h(t-U)) \,dt\right|^m\right) \\
& \hspace{8em} + n^{m/2}\left[\var\left(\int_\R \big[\f(t-U_1) - \E_{\pi}(\f(t-U))\big] \E_{\pi}(\f(t-U) h(t-U)) \,dt\right)\right]^{m/2}\Bigg),
\end{align*}
using Rosenthal's inequality, where $C(m)$ is a positive constant only depending on $m$. But, applying \ref{lemINEQ},
\begin{align*}
& \E\left(\left|\int_\R \big[\f(t-U_1) - \E_{\pi}(\f(t-U))\big] \E_{\pi}(\f(t-U) h(t-U)) \,dt\right|^m\right) \\
& \leq \frac{2^m}{T^m} \left(\int_\R |\f(x)| \,dx\right)^m \left(\int_\R |\f(x)|h(x) \,dx\right)^m
\end{align*}
and
\begin{align*}
& \var\left(\int_\R \big[\f(t-U_1) - \E_{\pi}(\f(t-U))\big] \E_{\pi}(\f(t-U) h(t-U)) \,dt\right) \\
& = \var\left(\int_\R \f(t-U_1) \E_{\pi}(\f(t-U) h(t-U)) \,dt\right) \\
& \leq \E \left[\left(\int_\R |\f(t-U_1)| \E_{\pi}(|\f(t-U)| h(t-U)) \,dt\right)^2\right] \\
& \leq \frac{1}{T^2} \left(\int_\R |\f(x)| \,dx\right)^2 \left(\int_\R |\f(x)|h(x) \,dx\right)^2.
\end{align*}
So,
\begin{align}
\E(|\mathcal{W}_{1,1}(\f)|^{m}) & \leq 2^m (n-1)^m \times C(m) \Bigg(\frac{2^m n}{T^m} \left(\int_\R |\f(x)| \,dx\right)^m \left(\int_\R |\f(x)|h(x) \,dx\right)^m \nonumber\\
& \hspace{11em} + \frac{n^{m/2}}{T^m} \left(\int_\R |\f(x)| \,dx\right)^m \left(\int_\R |\f(x)|h(x) \,dx\right)^m\Bigg) \nonumber\\
& \leq K_{1,1} \ \frac{n^{3m/2}}{T^m} \left(\int_\R |\f(x)| \,dx\right)^m \left(\int_\R |\f(x)|h(x) \,dx\right)^m \nonumber\\
& \leq K_{1,1} \ \frac{n^{3m/2}}{T^m} \left(\int_\R |\f(x)| \,dx\right)^{2m} \|h\|_{\infty}^m,  \label{A2_PP-2nd:W11}
\end{align}
with $K_{1,1}$ a positive constant only depending on $m$.
\begin{itemize}
  \item $\displaystyle \mathcal{W}_{1,2}(\f) = (n-1) \sum_{i=1}^{n} \int_\R \bigg[\big[\f(t-U_i) - \E_{\pi}(\f(t-U))\big]^2 - \var_{\pi}(\f(t-U))\bigg] \E_{\pi}(h(t-U)) \,dt$.
\end{itemize}
We have:
\begin{align*}
& \E(|\mathcal{W}_{1,2}(\f)|^{m}) \\
& = (n-1)^m \E\left(\left|\sum_{i=1}^{n} \int_\R \bigg[\big[\f(t-U_i) - \E_{\pi}(\f(t-U))\big]^2 - \var_{\pi}(\f(t-U))\bigg] \E_{\pi}(h(t-U)) \,dt\right|^m\right) \\
& \leq n^m \times C(m) \Bigg(n\E\left(\left|\int_\R \bigg[\big[\f(t-U_1) - \E_{\pi}(\f(t-U))\big]^2 - \var_{\pi}(\f(t-U))\bigg] \E_{\pi}(h(t-U)) \,dt\right|^m\right) \\
& \hspace{2em} + n^{m/2}\left[\var\left(\int_\R \bigg[\big[\f(t-U_1) - \E_{\pi}(\f(t-U))\big]^2 - \var_{\pi}(\f(t-U))\bigg] \E_{\pi}(h(t-U)) \,dt\right)\right]^{m/2}\Bigg),
\end{align*}
using Rosenthal's inequality, where $C(m)$ is a positive constant only depending on $m$. But, applying \ref{lemINEQ},
\begin{align*}
& \E\left(\left|\int_\R \bigg[\big[\f(t-U_1) - \E_{\pi}(\f(t-U))\big]^2 - \var_{\pi}(\f(t-U))\bigg] \E_{\pi}(h(t-U)) \,dt\right|^m\right) \\
& \leq K_{1,2} \ \left(\int_\R \f^2(x) \,dx\right)^m \left(\int_\R h(x) \,dx\right)^m,
\end{align*}
with $K_{1,2}$ a positive constant only depending on $m$ and
\begin{align*}
& \var\left(\int_\R \bigg[\big[\f(t-U_1) - \E_{\pi}(\f(t-U))\big]^2 - \var_{\pi}(\f(t-U))\bigg] \E_{\pi}(h(t-U)) \,dt\right) \\
& = \var\left(\int_\R \big[\f(t-U_1) - \E_{\pi}(\f(t-U))\big]^2 \E_{\pi}(h(t-U)) \,dt\right) \\
& \leq \E \left[\left(\int_\R \big[\f(t-U_1) - \E_{\pi}(\f(t-U))\big]^2 \E_{\pi}(h(t-U)) \,dt\right)^2\right] \\
& \leq \frac{1}{T^2} \E \left[\left(\int_\R \big[\f(t-U_1) - \E_{\pi}(\f(t-U))\big]^2 \,dt\right)^2\right] \left(\int_\R h(x) \,dx\right)^2 \\
& \leq K_{1,2} \ \frac{1}{T^2} \left(\int_\R \f^2(x) \,dx\right)^2 \left(\int_\R h(x) \,dx\right)^2
\end{align*}
So,
\begin{align}
& \E(|\mathcal{W}_{1,2}(\f)|^{m}) \nonumber\\
& \leq K_{1,2} \ n^m \Bigg(\frac{n}{T^m} \left(\int_\R \f^2(x) \,dx\right)^m \left(\int_\R h(x) \,dx\right)^m + \frac{n^{m/2}}{T^m} \left(\int_\R \f^2(x) \,dx\right)^m \left(\int_\R h(x) \,dx\right)^m\Bigg) \nonumber\\
& \leq K_{1,2} \ \frac{n^{3m/2}}{T^m} \left(\int_\R \f^2(x) \,dx\right)^m \left(\int_\R h(x) \,dx\right)^m.  \label{A2_PP-2nd:W12}
\end{align}
\begin{itemize}
  \item $\displaystyle \mathcal{W}_{1,3}(\f) = (n-1) \sum_{j=1}^{n} \int_\R \var_{\pi}(\f(t-U)) \big[h(t-U_j) - \E_{\pi}(h(t-U))\big] \,dt$.
\end{itemize}
We have:
\begin{align*}
& \E(|\mathcal{W}_{1,3}(\f)|^{m}) \\
& = (n-1)^m \E\left(\left|\sum_{j=1}^{n} \int_\R \var_{\pi}(\f(t-U)) \big[h(t-U_j) - \E_{\pi}(h(t-U))\big] \,dt\right|^m\right) \\
& \leq (n-1)^m \times C(m) \Bigg(n\E\left(\left|\int_\R \var_{\pi}(\f(t-U)) \big[h(t-U_1) - \E_{\pi}(h(t-U))\big] \,dt\right|^m\right) \\
& \hspace{10em} + n^{m/2}\left[\var\left(\int_\R \var_{\pi}(\f(t-U)) \big[h(t-U_1) - \E_{\pi}(h(t-U))\big] \,dt\right)\right]^{m/2}\Bigg),
\end{align*}
using Rosenthal's inequality, where $C(m)$ is a positive constant only depending on $m$. But, applying \ref{lemINEQ},
\[\E\left(\left|\int_\R \var_{\pi}(\f(t-U)) \big[h(t-U_1) - \E_{\pi}(h(t-U))\big] \,dt\right|^m\right) \leq \frac{2^m}{T^m} \left(\int_\R \f^2(x) \,dx\right)^m \left(\int_\R h(x) \,dx\right)^m,\]
and
\begin{align*}
& \var\left(\int_\R \var_{\pi}(\f(t-U)) \big[h(t-U_1) - \E_{\pi}(h(t-U))\big] \,dt\right) \\
& = \var\left(\int_\R \var_{\pi}(\f(t-U)) h(t-U_1) \,dt\right) \\
& \leq \E \left[\left(\int_\R \var_{\pi}(\f(t-U)) h(t-U_1) \,dt\right)^2\right] \\
& \leq \frac{1}{T^2} \left(\int_\R \f^2(x) \,dx\right)^2 \E \left[\left(\int_\R h(t-U_1) \,dt\right)^2\right] \\
& \leq \frac{1}{T^2} \left(\int_\R \f^2(x) \,dx\right)^2 \left(\int_\R h(x) \,dx\right)^2.
\end{align*}
So,
\begin{align}
\E(|\mathcal{W}_{1,3}(\f)|^{m}) & \leq (n-1)^m \times C(m) \Bigg(\frac{2^m n}{T^m} \left(\int_\R \f^2(x) \,dx\right)^m \left(\int_\R h(x) \,dx\right)^m \nonumber\\
& \hspace{10em} + \frac{n^{m/2}}{T^m} \left(\int_\R \f^2(x) \,dx\right)^m \left(\int_\R h(x) \,dx\right)^m\Bigg) \nonumber\\
& \leq K_{1,3} \ \frac{n^{3m/2}}{T^m} \left(\int_\R \f^2(x) \,dx\right)^m \left(\int_\R h(x) \,dx\right)^m,  \label{A2_PP-2nd:W13}
\end{align}
with $K_{1,3}$ a positive constant only depending on $m$.

Next we deal with the moments of $\mathcal{W}_2(\f)$ that we write: \[\mathcal{W}_2(\f) = \mathcal{W}_{2,1}(\f) + \mathcal{W}_{2,2}(\f) + \mathcal{W}_{2,3}(\f),\]
with:
\begin{itemize}
  \item $\displaystyle \mathcal{W}_{2,1}(\f) = 2\sum_{1 \leq i \neq j \leq n} \int_\R [\f(t-U_i) - \E_{\pi}(\f(t-U))] \big[\f(t-U_j) h(t-U_j) - \E_{\pi}(\f(t-U) h(t-U))\big] \,dt$.
\end{itemize}
We want to use Theorem 8.1.6 of \cite{dPG} (a moment inequality for $U$-statistics using decoupling) so we write:
\[\mathcal{W}_{2,1}(\f) = 2 \sum_{1 \leq i \neq j \leq n} f(U_i,U_j),\]
where \[f(U_i,U_j) = \int_\R \big[\f(t-U_i) - \E_{\pi}(\f(t-U))\big] \big[\f(t-U_j) h(t-U_j) - \E_{\pi}(\f(t-U) h(t-U))\big] \,dt.\]
There exists a positive constant $C_{2,m}$ depending on $m$ only such that
\begin{align*}
& \E\left(\left|\sum_{1 \leq i \neq j \leq n} f(U_i,U_j)\right|^m\right) \\
& \leq C_{2,m} n^m \E(|f(U_1,U_2)|^m) \\
& \leq C_{2,m} n^m \E\left(\left|\int_\R [\f(t-U_1) - \E_{\pi}(\f(t-U))] \big[\f(t-U_2) h(t-U_2) - \E_{\pi}(\f(t-U) h(t-U))\big] \,dt\right|^m\right) \\
& \leq C_{2,m} n^m \E\left[\left(\int_\R \big[\f(t-U_1) - \E_{\pi}(\f(t-U))\big]^2 \,dt\right)^{m/2}\right] \\
& \quad \times \E\left[\left(\int_\R \big[\f(t-U_2) h(t-U_2) - \E_{\pi}(\f(t-U) h(t-U))\big]^2 \,dt\right)^{m/2}\right] \\
& \leq K_{2,1} \ n^m  \left(\int_\R \f^2(x) \,dx\right)^{m/2} \left(\int_\R \f^2(x)h^2(x) \,dx\right)^{m/2} \\
& \leq K_{2,1} \ n^m \left(\int_\R \f^2(x) \,dx\right)^{m} \|h\|_{\infty}^m,
\end{align*}
by applying \ref{lemINEQ} and setting $K_{2,1}$ a positive constant only depending on $m$.
So,
\begin{equation}
\E(|\mathcal{W}_{2,1}(f)|^{m}) \leq K_{2,1} \ n^m \left(\int_\R \f^2(x) \,dx\right)^{m} \|h\|_{\infty}^m.  \label{A2_PP-2nd:W21}
\end{equation}
\begin{itemize}
  \item We may write: $\displaystyle \mathcal{W}_{2,2}(\f) = (n-2) \sum_{1 \leq i \neq k \leq n} f(U_i,U_k)$,
      where \[f(U_i,U_k) = \int_\R \big[\f(t-U_i) - \E_{\pi}(\f(t-U))\big] \big[\f(t-U_k) - \E_{\pi}(\f(t-U))\big] \E_{\pi}(h(t-U)) \,dt.\]
\end{itemize}
We use Theorem 8.1.6 of \cite{dPG}: there exists a positive constant $C_{2,m}$ depending on $m$ only such that
\begin{align*}
& \E\left(\left|\sum_{1 \leq i \neq k \leq n} f(U_i,U_k)\right|^m\right) \\
& \leq C_{2,m} n^m \E(|f(U_1,U_2)|^m) \\
& \leq C_{2,m} n^m \E\left(\left|\int_\R \big[\f(t-U_1) - \E_{\pi}(\f(t-U))\big] \big[\f(t-U_2) - \E_{\pi}(\f(t-U))\big] \E_{\pi}(h(t-U)) \,dt\right|^m\right) \\
& \leq K_{2,2} \ \frac{n^m}{T^m} \left\{\E\left[\left(\int_\R \big[\f(t-U_1) - \E_{\pi}(\f(t-U))\big]^2 \,dt\right)^{m/2}\right]\right\}^2 \left(\int_\R h(x) \,dx\right)^{m} \\
& \leq K_{2,2} \ \frac{n^m}{T^m} \left(\int_\R \f^2(x) \,dx\right)^{m} \left(\int_\R h(x) \,dx\right)^{m},
\end{align*}
by applying \ref{lemINEQ} and setting $K_{2,2}$ a positive constant only depending on $m$.
So,
\begin{equation}
\E(|\mathcal{W}_{2,2}(f)|^{m}) \leq K_{2,2} \ \frac{n^{2m}}{T^m} \left(\int_\R \f^2(x) \,dx\right)^{m} \left(\int_\R h(x) \,dx\right)^{m}.  \label{A2_PP-2nd:W22}
\end{equation}
\begin{itemize}
  \item We may write: $\displaystyle \mathcal{W}_{2,3}(\f) = \sum_{1 \leq i \neq j \leq n} f(U_i,U_j)$,
      where \[f(U_i,U_j)=\int_\R\bigg[\big[\f(t-U_i)-\E_{\pi}(\f(t-U))\big]^2-\var_{\pi}(\f(t-U))\bigg]\big[h(t-U_j)-\E_{\pi}(h(t-U))\big]\,dt.\]
\end{itemize}
We use Theorem 8.1.6 of \cite{dPG}: there exists a positive constant $C_{2,m}$ depending on $m$ only such that
\begin{align*}
& \E\left(\left|\sum_{1 \leq i \neq j \leq n} f(U_i,U_j)\right|^m\right) \\
& \leq C_{2,m} n^m \E(|f(U_1,U_2)|^m) \\
& \leq C_{2,m} n^m \E\left(\left|\int_\R [[\f(t-U_1) - \E_{\pi}(\f(t-U))]^2 - \var_{\pi}(\f(t-U))] [h(t-U_2) - \E_{\pi}(h(t-U))] \,dt\right|^m\right) \\
& \leq K_{2,3} \ n^m \E\left[\left(\int_\R \left|\big[\f(t-U_1) - \E_{\pi}(\f(t-U))\big]^2 - \var_{\pi}(\f(t-U))\right| \,dt\right)^m\right] \|h\|_{\infty}^m \\
& \leq K_{2,3} \ n^m \left(\int_\R \f^2(x) \,dx\right)^{m} \|h\|_{\infty}^m,
\end{align*}
by applying \ref{lemINEQ} and setting $K_{2,3}$ a positive constant only depending on $m$.
So,
\begin{equation}
\E(|\mathcal{W}_{2,3}(\f)|^{m}) \leq K_{2,3} \ n^m \left(\int_\R \f^2(x) \,dx\right)^{m} \|h\|_{\infty}^m.  \label{A2_PP-2nd:W23}
\end{equation}
And finally, we focus on the moments of $\mathcal{W}_3(\f)$ that we write: $\displaystyle \mathcal{W}_{3}(\f) = \sum_{1 \leq i \neq j \neq k \leq n} f(U_i,U_j,U_k)$,
where \[f(U_i,U_j,U_k) = \int_\R \big[\f(t-U_i) - \E_{\pi}(\f(t-U))\big] \big[\f(t-U_k) - \E_{\pi}(\f(t-U))\big] \big[h(t-U_j) - \E_{\pi}(h(t-U))\big] \,dt.\]
We use Theorem 8.1.6 of \cite{dPG}: there exists a positive constant $C_{3,m}$ depending on $m$ only such that
\begin{align*}
& \E\left(\left|\sum_{1 \leq i \neq j \neq k \leq n} f(U_i,U_j,U_k)\right|^m\right) \\
& \leq C_{3,m} n^{3m/2} \E(|f(U_1,U_2,U_3)|^m) \\
& \leq C_{3,m} n^{3m/2} \E\Bigg(\Bigg|\int_\R \big[\f(t-U_1) - \E_{\pi}(\f(t-U))\big] \big[\f(t-U_2) - \E_{\pi}(\f(t-U))\big] \\
& \hspace{27em} \times \big[h(t-U_3) - \E_{\pi}(h(t-U))\big] \,dt\Bigg|^m\Bigg) \\
& \leq K_3 \ n^{3m/2} \E\left[\left(\int_\R \Big|\big[\f(t-U_1) - \E_{\pi}(\f(t-U))\big] \big[\f(t-U_2) - \E_{\pi}(\f(t-U))\big]\Big| \,dt\right)^m\right] \|h\|_{\infty}^m,
\end{align*}
by applying \ref{lemINEQ} and setting $K_3$ a positive constant only depending on $m$.
Furthermore, using the support properties of the biorthogonal wavelet bases considered in this paper, we have
\begin{align*}
& \E\left[\left(\int_\R \Big|\big[\f(t-U_1) - \E_{\pi}(\f(t-U))\big] \big[\f(t-U_2) - \E_{\pi}(\f(t-U))\big]\Big| \,dt\right)^m\right] \\
& = \E\Bigg[\Bigg(\int_\R \Big|\f(t-U_1)\f(t-U_2) - \f(t-U_1)\E_{\pi}(\f(t-U)) - \f(t-U_2)\E_{\pi}(\f(t-U)) \\
& \hspace{32em} + \big[\E_{\pi}(\f(t-U))\big]^2\Big| \,dt\Bigg)^m\Bigg] \\
& \leq K_3 \ \Bigg\{\E\Bigg[\Bigg(\int_\R \Big|\f(t-U_1)\f(t-U_2)\Big| \,dt\Bigg)^m\Bigg] + \E\Bigg[\Bigg(\int_\R \Big|\f(t-U_1)\E_{\pi}(\f(t-U))\Big| \,dt\Bigg)^m\Bigg] \\
& \hspace{26em} + \E\Bigg[\Bigg(\int_\R \big[\E_{\pi}(\f(t-U))\big]^2 \,dt\Bigg)^m\Bigg]\Bigg\},
\end{align*}
with:
\begin{align*}
\E\Bigg[\Bigg(\int_\R \Big|\f(t-U_1)\f(t-U_2)\Big| \,dt\Bigg)^m\Bigg] & = \frac{1}{T^2} \int_{0}^{T}du_1 \int_{0}^{T}du_2 \Bigg(\int_\R \big|\f(t-u_1)\f(t-u_2)\big| \,dt\Bigg)^m \\
& \leq \frac{1}{T^2} \int_{0}^{T}du_1 \int_{u_1-2M}^{u_1+2M}du_2 \left(\int_\R \f^2(x) \,dx\right)^m \\
& \leq \frac{4M}{T} \left(\int_\R \f^2(x) \,dx\right)^m,
\end{align*}
\begin{align*}
\E\Bigg[\Bigg(\int_\R \Big|\f(t-U_1)\E_{\pi}(\f(t-U))\Big| \,dt\Bigg)^m\Bigg] & = \E\Bigg[\Bigg(\int_\R \Big|\f(t-U_1)\left(\frac{1}{T} \int_0^T \f(t-u) \,du\right)\Big| \,dt\Bigg)^m\Bigg] \\
& \leq \frac{1}{T^m} \E\Bigg[\Bigg(\int_0^T \,du \int_\R \big|\f(t-U_1) \f(t-u)\big| \,dt\Bigg)^m\Bigg] \\
& \leq \frac{1}{T^{m+1}} \int_{0}^{T} du_1 \Bigg(\int_0^T du \int_\R \big|\f(t-u_1) \f(t-u)\big| \,dt\Bigg)^m \\
& \leq \frac{1}{T^{m+1}} \int_{0}^{T} du_1 \Bigg(\int_{u_1-2M}^{u_1+2M}du \int_\R \f^2(x) \,dx\Bigg)^m \\
& \leq \frac{(4M)^m}{T^m} \left(\int_\R \f^2(x) \,dx\right)^m
\end{align*}
and
\begin{align*}
\int_\R \big[\E_{\pi}(\f(t-U))\big]^2 \,dt & = \frac{1}{T^2} \int_\R dt \int_{0}^{T}du \, \f(t-u) \int_{0}^{T}dv \, \f(t-v) \\
& = \frac{1}{T^2} \int_{0}^{T}du \int_{0}^{T}dv  \int_\R \f(t-u) \f(t-v) \,dt \\
& \leq \frac{1}{T^2} \int_{0}^{T}du \int_{u-2M}^{u+2M}dv  \int_\R \f^2(x) \,dx \\
& \leq \frac{4M}{T} \int_\R \f^2(x) \,dx.
\end{align*}
So,
\begin{equation}
\E(|\mathcal{W}_3(\f)|^m) \leq K'_3 \ n^{3m/2} \Bigg[\frac{1}{T} \left(\int_\R \f^2(x) \,dx\right)^m + \frac{1}{T^m} \left(\int_\R \f^2(x) \,dx\right)^m\Bigg] \|h\|_{\infty}^m,  \label{A2_PP-2nd:W3}
\end{equation}
with $K'_3$ a positive constant only depending on $m$ and the compact support of $\psi$.
Note that if we had used the same method as for the control of the moments of $W_{2,2}(\f)$, we would not get the correct rate of convergence. We obtain a better rate of convergence thanks to the properties of the biorthogonal wavelet bases used here.

Thus, combining inequalities (\ref{A2_PP-2nd:W0}), (\ref{A2_PP-2nd:W11}), (\ref{A2_PP-2nd:W12}), (\ref{A2_PP-2nd:W13}), (\ref{A2_PP-2nd:W21}), (\ref{A2_PP-2nd:W22}), (\ref{A2_PP-2nd:W23}) and (\ref{A2_PP-2nd:W3}) yields
\begin{align*}
& \E(V(\f)^m) \\
& \leq K \ \Bigg\{ n^m \left(\int_\R \f^2(x) \,dx\right)^m \|h\|_{\infty}^m + \frac{n^{2m}}{T^m} \left(\int_\R \f^2(x) \,dx\right)^m \left(\int_\R h(x) \,dx\right)^m \\
& \hspace{5em} \quad + \quad \frac{n^{3m/2}}{T^m} \left(\int_\R |\f(x)| \,dx\right)^{2m} \|h\|_{\infty}^m + \frac{n^{3m/2}}{T^m} \left(\int_\R \f^2(x) \,dx\right)^m \left(\int_\R h(x) \,dx\right)^m \\
& \hspace{5em} \qquad + \frac{n^{3m/2}}{T^m} \left(\int_\R \f^2(x) \,dx\right)^m \left(\int_\R h(x) \,dx\right)^m \quad + \quad n^m \left(\int_\R \f^2(x) \,dx\right)^{m} \|h\|_{\infty}^m \\
& \hspace{5em} \qquad + \frac{n^{2m}}{T^m} \left(\int_\R \f^2(x) \,dx\right)^{m} \left(\int_\R h(x) \,dx\right)^{m} + n^m \left(\int_\R \f^2(x) \,dx\right)^{m} \|h\|_{\infty}^m \\
& \hspace{5em} \quad + \quad n^{3m/2} \Bigg[\frac{1}{T} \left(\int_\R \f^2(x) \,dx\right)^m + \frac{1}{T^m} \left(\int_\R \f^2(x) \,dx\right)^m\Bigg] \|h\|_{\infty}^m \Bigg\} \\
& \leq K \ \Bigg\{ \frac{n^{3m/2}}{T^{m}} \|\f\|_1^{2m} \|h\|_{\infty}^m + \frac{n^{2m}}{T^{m}} \|\f\|_2^{2m} \|h\|_{1}^m + \left[n^m + \frac{n^{3m/2}}{T}\right] \|\f\|_2^{2m} \|h\|_{\infty}^m \Bigg\},
\end{align*}
with $K$ a positive constant only depending on $m$ and the compact support of $\psi$.
So, we obtain
\begin{equation}
\E(V(\f)^p) \leq K \ \Bigg\{ \frac{n^{3p/2}}{T^{p}} \|\f\|_1^{2p} \|h\|_{\infty}^{p} + \frac{n^{2p}}{T^{p}} \|\f\|_2^{2p} \|h\|_{1}^{p} + \left[n^{p} + \frac{n^{3p/2}}{T}\right] \|\f\|_2^{2p} \|h\|_{\infty}^{p} \Bigg\},  \label{A2_PP-2nd}
\end{equation}
with $K$ a positive constant only depending on $p$ and the compact support of $\psi$.

\medskip

To conclude for the first term of (\ref{A2_general}), using inequalities (\ref{A2_PP-1st}) and (\ref{A2_PP-2nd}) in (\ref{A2_PP}), we have
\begin{equation}
\begin{split}
& \E(|G(\f) - \E(G(\f)|U_1,\ldots,U_n)|^{2p}) \\
& \leq K \ \Bigg\{ \frac{n^2}{T^{2p}} \|\f\|_1^{2p} \|h\|_1 + \frac{n^{p+1}}{T^p} \|\f\|_2^{2p} \|h\|_1 + \frac{n^2}{T} \|\f\|_2^2 \|\f\|_{\infty}^{2p-2} \|h\|_1 + n \|\f\|_2^2 \|\f\|_{\infty}^{2p-2} \|h\|_{\infty} \\
& \hspace{4em} + \frac{n^{3p/2}}{T^{p}} \|\f\|_1^{2p} \|h\|_{\infty}^{p} + \frac{n^{2p}}{T^{p}} \|\f\|_2^{2p} \|h\|_{1}^{p} + \left[n^{p} + \frac{n^{3p/2}}{T}\right] \|\f\|_2^{2p} \|h\|_{\infty}^{p} \Bigg\}.
\end{split}  \label{A2_PP-end}
\end{equation}

\bigskip

Now, we have to focus on the second term of (\ref{A2_general}). Recall the definition (\ref{def_W}) of $W(\f)$ \[W(\f)=W_1(\f)+W_2(\f),\]
with \[W_1(\f) = (n-1) \sum_{i=1}^{n} \int_\R \big[\f(t-U_i)-\E_{\pi}(\f(t-U))\big] \E_{\pi}(h(t-U)) \,dt,\]
and \[W_2(\f) = \sum_{1 \leq i \neq j \leq n} g(U_i,U_j),\]
where \[g(U_i,U_j) = \int_\R \big[\f(t-U_i)-\E_{\pi}(\f(t-U))\big] \big[h(t-U_j)-\E_{\pi}(h(t-U))\big] \,dt.\]
So, \[\E(|W(\f)|^{2p}) \leq 2^{2p-1} \big[\E(|W_1(\f)|^{2p}) + \E(|W_2(\f)|^{2p})\big].\]

\medskip

On the one hand, we have to control $\E(|W_1(\f)|^{2p})$.
We use Rosenthal's inequality: there exists a positive constant $C(p)$ only depending on $p$ such that
\begin{align*}
& \E(|W_1(\f)|^{2p}) \\
& = (n-1)^{2p} \E\left(\left|\sum_{i=1}^{n} \int_\R \big[\f(t-U_i)-\E_{\pi}(\f(t-U))\big] \E_{\pi}(h(t-U)) \,dt\right|^{2p}\right) \\
& \leq (n-1)^{2p} \times C(p) \Bigg(n\E\left(\left|\int_\R \big[\f(t-U_1)-\E_{\pi}(\f(t-U))\big] \E_{\pi}(h(t-U)) \,dt\right|^{2p}\right) \\
& \hspace{10em} + n^{p}\left[\var\left(\int_\R \big[\f(t-U_1)-\E_{\pi}(\f(t-U))\big] \E_{\pi}(h(t-U)) \,dt\right)\right]^{p}\Bigg).
\end{align*}
But, applying \ref{lemINEQ},
\begin{align*}
& \E\left(\left|\int_\R \big[\f(t-U_1)-\E_{\pi}(\f(t-U))\big] \E_{\pi}(h(t-U)) \,dt\right|^{2p}\right) \\
& \leq \frac{1}{T^{2p}} \E\left(\left|\int_\R \big[\f(t-U_1)-\E_{\pi}(\f(t-U))\big] \,dt\right|^{2p}\right) \left(\int_\R h(x) \,dx\right)^{2p} \\
& \leq \frac{2^{2p}}{T^{2p}} \left(\int_\R |\f(x)| \,dx\right)^{2p} \left(\int_\R h(x) \,dx\right)^{2p}
\end{align*}
and
\begin{align*}
& \var\left(\int_\R \big[\f(t-U_1)-\E_{\pi}(\f(t-U))\big] \E_{\pi}(h(t-U)) \,dt\right) \\
& = \var\left(\int_\R \f(t-U_1) \E_{\pi}(h(t-U)) \,dt\right) \\
& \leq \E \left[\left(\int_\R |\f(t-U_1)| \E_{\pi}(h(t-U)) \,dt\right)^2\right] \\
& \leq \frac{1}{T^2} \E \left[\left(\int_\R |\f(t-U_1)| \,dt\right)^2\right] \left(\int_\R h(x) \,dx\right)^{2} \\
& \leq \frac{1}{T^2} \left(\int_\R |\f(x)| \,dx\right)^2 \left(\int_\R h(x) \,dx\right)^{2}.
\end{align*}
So,
\begin{equation}
\E(|W_1(\f)|^{2p}) \leq K_1 \ n^{2p} \left[\frac{n}{T^{2p}} \left(\int_\R |\f(x)| \,dx\right)^{2p} + \frac{n^{p}}{T^{2p}} \left(\int_\R |\f(x)| \,dx\right)^{2p} \right] \left(\int_\R h(x) \,dx\right)^{2p},  \label{A2_W1}
\end{equation}
with $K_1$ a positive constant only depending on $p$.

\medskip

And on the other hand, we have to control $\E(|W_2(\f)|^{2p})$.
We have: \[W_2(\f) = \sum_{1 \leq i \neq j \leq n} g(U_i,U_j).\] We use Theorem 3.3 of \cite{GLZ} associated with Theorem 1 of \cite{dPMS} (we keep the same notations of \cite{GLZ}). We set $h_{i,j}=\left\{\begin{array}{cl} 0 & \mbox{if $i=j$} \\ g & \mbox{otherwise}\end{array}\right.$ and we consider $(U^{(1)}_i, i=1 \ldots n)$ and $(U^{(2)}_i, i=1 \ldots n)$ two independent copies of $(U_i, i=1 \ldots n)$. With Theorem 3.3 of \cite{GLZ}, there exists an universal constant $K$ such that
\[\E\left(\left|\sum_{1 \leq i , j \leq n} h_{i,j}(U^{(1)}_i,U^{(2)}_j)\right|^{2p}\right) \leq K^{2p} \big[(2p)^p C^{2p} + (2p)^{2p} D^{2p} + (2p)^{3p} B^{2p} + (2p)^{4p} A^{2p}\big],\]
where
\begin{itemize}
  \item $\displaystyle A = \max_{i,j} \|h_{i,j}\|_{\infty} = \|g\|_{\infty}$. But, for all $(x,y) \in \R^2$,
      \begin{align*}
      |g(x,y)| & = \left|\int_\R \big[\f(t-x)-\E_{\pi}(\f(t-U))\big] \big[h(t-y)-\E_{\pi}(h(t-U))\big] \,dt\right| \\
      & \leq 4 \int_\R |\f(x)|\,dx \|h\|_{\infty},
      \end{align*}
      using equality (b) of \ref{lemINEQ} with $f=\f$. So,
      \begin{equation}
      A \leq 4 \int_\R |\f(x)|\,dx \|h\|_{\infty},  \label{A2_W2-A}
      \end{equation}
  \item $\displaystyle C^2 = \sum_{i,j} \E(h_{i,j}^2(U^{(1)}_i,U^{(2)}_j)) = \sum_{i \neq j} \E(g^2(U_i,U_j))$. But, for all $i \neq j$,
      \begin{align*}
      & \E(g^2(U_i,U_j)) \\
      & \leq \E \left[\left(\int_\R \big[\f(t-U_i)-\E_{\pi}(\f(t-U))\big] \big[h(t-U_j)-\E_{\pi}(h(t-U))\big]
      \,dt\right)^2\right] \\
      & \leq \E\left[\int_\R [\f(t-U_i)-\E_{\pi}(\f(t-U))]^2 |h(t-U_j)-\E_{\pi}(h(t-U))| \,dt \int_\R
      |h(t-U_j)-\E_{\pi}(h(t-U))| \,dt\right] \\
      & \leq 2 \E\left[\int_\R \big[\f(t-U_i)-\E_{\pi}(\f(t-U))\big]^2 \big|h(t-U_j)-\E_{\pi}(h(t-U))\big| \,dt\right] \int_\R h(x) \,dx \\
      & \leq \frac{4}{T} \int_\R \E\left(\big[\f(t-U_i)-\E_{\pi}(\f(t-U))\big]^2\right) \,dt \left(\int_\R h(x) \,dx\right)^2 \\
      & \leq \frac{4}{T} \int_\R \f^2(x) \,dx \left(\int_\R h(x) \,dx\right)^2,
      \end{align*}
      using \ref{lemINEQ}.
      So,
      \begin{equation}
      C^2 \leq \frac{4n(n-1)}{T} \int_\R \f^2(x) \,dx \left(\int_\R h(x) \,dx\right)^2,  \label{A2_W2-C}
      \end{equation}
  \item $\displaystyle B^2 = \max\left[\bigg\|\sum_i \E_1 (h_{i,j}^2(U_i,y))\bigg\|_{\infty},\bigg\|\sum_j \E_2 (h_{i,j}^2(x,U_j))\bigg\|_{\infty}\right]$, with
      \[\E_1(h_{i,j}^2(U_i,y)) = \left\{\begin{array}{cl} \E_{\pi}(g^2(U,y)) & \mbox{if $i \neq j$} \\ 0 & \mbox{otherwise}\end{array}\right.
      \leq \frac{4}{T} \int_\R \f^2(x) \,dx \left(\int_\R h(x) \,dx\right)^2\]
      and
      \[\E_1(h_{i,j}^2(x,U_j)) = \left\{\begin{array}{cl} \E_{\pi}(g^2(x,U)) & \mbox{if $i \neq j$} \\ 0 & \mbox{otherwise}\end{array}\right.
      \leq \frac{16}{T} \int_\R \f^2(x) \,dx \left(\int_\R h(x) \,dx\right)^2,\]
      using established inequalities to get equation (\ref{A1_W2-B}) in the proof of the assumption (A1). So,
      \begin{equation}
      B^2 \leq \frac{16n}{T} \int_\R \f^2(x) \,dx \left(\int_\R h(x) \,dx\right)^2, \label{A2_W2-B}
      \end{equation}
  \item $\displaystyle D = \sup\left\{\E\bigg(\sum_{i,j} h_{i,j}(U^{(1)}_i,U^{(2)}_j)a_i(U^{(1)}_i)b_j(U^{(2)}_j)\bigg) : \E\bigg(\sum_i a_i^2(U_i)\bigg) \leq 1 , \E\bigg(\sum_j b_j^2(U_j)\bigg) \leq 1\right\}$. By using established inequalities to get equation (\ref{A1_W2-D}) in the proof of the assumption (A1), we obtain
      \begin{equation}
      D \leq 2n \sqrt{\frac{1}{T} \int_\R \f^2(x) \,dx} \int_\R h(x) \,dx.  \label{A2_W2-D}
      \end{equation}
\end{itemize}

Moreover, we use the equivalence of Theorem 3.3 of \cite{GLZ} and the decoupling inequality provided in Theorem 1 of \cite{dPMS} to obtain the following upper bound of $\E(|W_2(\f)|^{2p})$:
\begin{equation}
\begin{split}
\E(|W_2(\f)|^{2p}) & \leq K_2 \ \Bigg[\frac{n^{2p}}{T^p} \left(\int_\R \f^2(x) \,dx\right)^p \left(\int_\R h(x) \,dx\right)^{2p} + \frac{n^p}{T^p} \left(\int_\R \f^2(x) \,dx\right)^{p} \left(\int_\R h(x) \,dx\right)^{2p} \\
& \hspace{25em} + \left(\int_\R |\f(x)| \,dx\right)^{2p} \|h\|_{\infty}^{2p}\Bigg],
\end{split}  \label{A2_W2}
\end{equation}
with $K_2$ a positive constant only depending on $p$.

\bigskip

Finally, by using inequalities (\ref{A2_PP-end}), (\ref{A2_W1}) and (\ref{A2_W2}) in (\ref{A2_general}), we obtain:
\begin{align*}
& \E(|\hat{\beta}_{\lambda} - \beta_{\lambda}|^{2p}) \\
& \leq K \ \Bigg\{ \frac{1}{n^{2p-2}T^{2p}} \|\f\|_1^{2p} \|h\|_1 + \frac{1}{n^{p-1}T^p} \|\f\|_2^{2p} \|h\|_1 + \frac{1}{n^{2p-2}T} \|\f\|_2^2 \|\f\|_{\infty}^{2p-2} \|h\|_1 \\
& \hspace{3em} + \frac{1}{n^{2p-1}} \|\f\|_2^2 \|\f\|_{\infty}^{2p-2} \|h\|_{\infty} + \frac{1}{n^{p/2}T^{p}} \|\f\|_1^{2p} \|h\|_{\infty}^{p} + \frac{1}{T^{p}} \|\f\|_2^{2p} \|h\|_{1}^{p} \\
& \hspace{3em} + \frac{1}{n^{p}} \|\f\|_2^{2p} \|h\|_{\infty}^{p} + \frac{1}{n^{p/2}T} \|\f\|_2^{2p} \|h\|_{\infty}^{p} \quad + \quad  \frac{n}{T^{2p}} \|\f\|_1^{2p} \|h\|_{1}^{2p} + \frac{n^{p}}{T^{2p}} \|\f\|_1^{2p} \|h\|_{1}^{2p} \\
& \hspace{3em} \quad + \quad \frac{1}{T^p} \|\f\|_2^{2p} \|h\|_{1}^{2p} + \frac{1}{n^pT^p}  \|\f\|_2^{2p} \|h\|_{1}^{2p} + \frac{1}{n^{2p}} \|\f\|_1^{2p} \|h\|_{\infty}^{2p} \Bigg\}
\end{align*}
and so,
\begin{align*}
& \left[\E\left(|\hat{\beta}_{\lambda}-\beta_{\lambda}|^{2p}\right)\right]^{\frac{1}{p}} \\
& \leq K \ \Bigg\{ \frac{1}{n^{2-2/p}T^{2}} \|\f\|_1^{2} \|h\|_1^{1/p} + \frac{n}{T^{2}} \|\f\|_1^{2} \|h\|_{1}^{2} + \frac{1}{n^{1/2}T} \|\f\|_1^{2} \|h\|_{\infty} + \frac{1}{n^{2}} \|\f\|_1^{2} \|h\|_{\infty}^{2} \\
& \hspace{3em} + \frac{1}{n^{1-1/p}T} \|\f\|_2^{2} \|h\|_1^{1/p} + \frac{1}{T} \|\f\|_2^{2} \|h\|_{1} + \frac{1}{T} \|\f\|_2^{2} \|h\|_{1}^{2} + \left[\frac{1}{n} + \frac{1}{n^{1/2}T^{1/p}}\right] \|\f\|_2^{2} \|h\|_{\infty} \\
& \hspace{3em} + \left[\frac{1}{n^{2-2/p}T^{1/p}} + \frac{1}{n^{2-1/p}}\right] \|\f\|_2^{2/p} \|\f\|_{\infty}^{2-2/p} \|h\|_{\infty}^{1/p} \Bigg\},
\end{align*}
with $K$ a positive constant only depending on $p$ and the compact support of $\psi$.

Recall that for any $\lambda=(j,k)\in\Lambda$, we have: \[\|\f\|_1 \leq 2^{-j/2}M_{\psi,1}, \quad \|\f\|_2 \leq M_{\psi,2} \quad \mbox{and} \quad \|\f\|_\infty \leq 2^{j/2}M_{\psi,\infty}\]
We consider $1<p<\infty$ and we fix $1<q<\infty$ such that $\frac{1}{p} + \frac{1}{q} = 1$, so that
\begin{align*}
\left[\E\left(|\hat{\beta}_{\lambda}-\beta_{\lambda}|^{2p}\right)\right]^{\frac{1}{p}} & \leq K \ \Bigg\{ \frac{n}{T^{2}} \|h\|_{1}^{2} + \frac{1}{n^{2}} \|h\|_{\infty}^{2} + \frac{1}{n^{1/q}T} \|h\|_1^{1/p} + \frac{1}{T} \|h\|_{1} + \frac{1}{T} \|h\|_{1}^{2} \\
& \hspace{3em} + \left[\frac{1}{n} + \frac{1}{n^{1/2}T^{1/p}}\right] \|h\|_{\infty} + \left[\frac{1}{n^{2/q}T^{1/p}} + \frac{1}{n^{1+1/q}}\right] 2^{j_0/q} \|h\|_{\infty}^{1/p} \Bigg\},
\end{align*}
with $K$ a positive constant depending on $p$, $\|\psi\|_1$, $\|\psi\|_2$, $\|\psi\|_{\infty}$ and the compact support of $\psi$.

\medskip

Finally, choosing $p=2$, Assumption (A2) is fulfilled with
\[R= C_R \ \Bigg\{ \frac{1}{n}  + \frac{2^{j_0/2}}{n^{3/2}} + \frac{2^{j_0/2}}{nT^{1/2}} + \frac{n}{T^{2}}  \Bigg\},\]
where $C_R$ is a positive constant depending on $\|h\|_1$, $\|h\|_{\infty}$, $\|\psi\|_1$, $\|\psi\|_2$, $\|\psi\|_{\infty}$ and the compact support of $\psi$,
\[H_\lambda = \1_{\lambda \in \Gamma}\]
and $\varepsilon=1$.

%%%%%%%%%%  6.4.3  Proof of Assumption (A3)  %%%%%%%%%%
\subsubsection{Proof of Assumption (A3)}

To shorten mathematical expressions, we denote $\eta_\lambda = \eta_\lambda(\gamma,\Delta)$ in the sequel. The following inequality:
\[\P\left( |\hat{\beta}_{\lambda}-\beta_{\lambda}| > \kappa \eta_{\lambda} , |\hat{\beta}_{\lambda}| > \eta_{\lambda} \right) \leq H_{\lambda}\zeta\]
is obvious with $\zeta = \omega$, which proves Assumption (A3) choosing $\theta = \frac{1+\varepsilon}{\varepsilon}$.

%%%%%%%%%%  6.4.4  Completion of the proof of Theorem 3  %%%%%%%%%%
\subsubsection{Completion of the proof of \ref{thm3}}

Therefore we can apply \ref{thmRBR}: the estimator $\tilde{\beta} = \left(\hat{\beta}_{\lambda} \1_{|\hat{\beta}_{\lambda}| \geq \eta_{\lambda}} \1_{\lambda \in \Gamma}\right)_{\lambda \in \Lambda}$ satisfies
\begin{align*}
\frac{1-\kappa^{2}}{1+\kappa^{2}} \E\left(\|\tilde{\beta}-\beta\|_{\ell_{2}}^{2}\right) & \leq \E\left(\inf_{m \subset \Gamma} \left\{\frac{1+\kappa^{2}}{1-\kappa^{2}} \sum_{\lambda \not \in m} \beta_{\lambda}^{2} + \frac{1-\kappa^{2}}{\kappa^{2}} \sum_{\lambda \in m}(\hat{\beta}_{\lambda}-\beta_{\lambda})^{2} + \sum_{\lambda \in m} \eta_{\lambda}^{2}\right\}\right) + L D \sum_{\lambda \in \Gamma} H_{\lambda} \\
& \leq \inf_{m \subset \Gamma} \left\{\frac{1+\kappa^{2}}{1-\kappa^{2}} \sum_{\lambda \not \in m} \beta_{\lambda}^{2} + \frac{1-\kappa^{2}}{\kappa^{2}} \sum_{\lambda \in m}\E((\hat{\beta}_{\lambda}-\beta_{\lambda})^{2}) + \sum_{\lambda \in m} \E(\eta_{\lambda}^{2})\right\} + L D \sum_{\lambda \in \Gamma} H_{\lambda},
\end{align*}
with
\begin{itemize}
  \item for all $\lambda = (j,k)$ in $\Gamma$, \[\E((\hat{\beta}_{\lambda}-\beta_{\lambda})^{2}) = \var(\hat{\beta}_{\lambda}) \leq K \ \left\{\frac{1}{n} + \frac{1}{T} + \frac{2^{-j}n}{T^2}\right\},\] where $K$ is a positive constant depending on $\|h\|_1$, $\|h\|_{\infty}$, $\|\psi\|_1$ and $\|\psi\|_2$ (see \ref{lemEST});
  \item for all $\lambda = (j,k)$ in $\Gamma$,
      \[\eta_{\lambda} \leq K \left(\sqrt{j_0 \widetilde{V}\left(\frac{\f}{n}\right)} + j_0 B\left(\frac{\f}{n}\right) + \tilde{\Delta} \frac{N_\R}{n}\right)\] where $K$ depends on $\varepsilon$, $\kappa$, $\gamma$, $\|\psi\|_1$, $\|\psi\|_2$ and $\|\psi\|_{\infty}$ and \[\tilde{\Delta} = \frac{j_0^2 2^{j_0/2}}{n} + \frac{j_0}{\sqrt{T}} +  \frac{\sqrt{j_0n}}{T};\]
  \item $L D = \frac{R}{\kappa^{2}} \big((1+\theta^{-1/2}) \omega^{1/2} + (1+\theta^{1/2}) \varepsilon^{1/2} \zeta^{1/2}\big) \leq K \ R \big(e^{-\kappa^2j_0\gamma/2} + \exp{(-g(\varepsilon_0) n\|h\|_1/2)}\big)$, where $K$ is a positive constant depending only on $\varepsilon$ and $\kappa$,
  \item $\displaystyle \sum_{\lambda \in \Gamma} H_{\lambda} = |\Gamma|$, where $|\Gamma|$ is the cardinal of the set $\Gamma$. So, we can upper bound this quantity by $K \ 2^{j_0}$, where $K$ is a positive constant depending only on the compact support of $h$ and the compact support of $\psi$.
\end{itemize}
Recall that $\varepsilon = 1 $, $\kappa\in]0;1[$ will be fixed in the sequel and $\gamma > 0$, according Assumption (A1).

\medskip

It remains to compute $\E(\eta_{\lambda}^{2})$. Let $\lambda\in\Gamma$. We have:
\[\E(\eta_{\lambda}^{2}) \leq K \ \left(\frac{j_0}{n^2} \E(\widetilde{V}(\f)) + \frac{j_0^2}{n^2} \E(B^2(\f)) + \Bigg\{ \frac{j_0n}{T^2} + \frac{j_0^2}{T} +  \frac{2^{j_0}j_0^4}{n^2} \Bigg\} \frac{\E(N_\R^2)}{n^2}\right),\]
with $K$ depending on $\varepsilon$, $\kappa$, $\gamma$, $\|\psi\|_1$, $\|\psi\|_2$ and $\|\psi\|_{\infty}$  and $\E(N_\R^2) = n\|h\|_1 + n^2\|h\|_1^2 \leq 2n^2 M_{h,1}^2 $. \\
We control $\widetilde{V}(\f)$ in expectation and we recall that $\alpha=\kappa^2j_0\gamma$.
\begin{equation}
\E(\widetilde{V}(\f)) \leq \E(\hat{V}(\f)) + \sqrt{2\alpha\E(\hat{V}(\f))\E(B^2(\f))} + 3\alpha \E(B^2(\f)),  \label{completion_V}
\end{equation}
with using inequality (\ref{pr-lemEST_PP}),
\[\E(\hat{V}(\f)) = \E(V(\f)) \leq K \ \left\{n \|h\|_{\infty} + \frac{n^2}{T} \|h\|_1\right\},\]
where $K$ is a positive constant depending only on $\|\psi\|_2$.

Now, we focus on $\E(B^2(\f))$ where
\begin{align*}
B(\f) & = B(\f) = \left\|\sum_{i=1}^{n} \left[\f(\cdot-U_i) - \frac{n-1}{n} \E_{\pi}(\f(\cdot-U))\right]\right\|_{\infty} \\
& = \sup_{t \in \R} \left|\sum_{i=1}^{n} \left[\f(t-U_i) - \frac{n-1}{n} \E_{\pi}(\f(t-U))\right]\right| \\
& \leq \tilde{B}(\f) + \frac{1}{T} \|\f\|_1,
\end{align*}
with $\displaystyle \tilde{B}(\f) = \sup_{t \in \R} \left|\sum_{i=1}^{n} \big[\f(t-U_i) - \E_{\pi}(\f(t-U))\big]\right|$. \\
Then, we have to control $\E(\tilde{B}(\f))$. Since it is a decomposition biorthogonal wavelet, $\f$ is a piecewise constant function and we can write:
\[\displaystyle \f = \sum_{l=1}^{N} c_l \1_{[a_l;b_l]},\]
where $N\in\N^*$ and for any $l\in \{1,\ldots,N\}$, $a_l, b_l, c_l \in \R$ and $a_l<b_l$.
It is easy to see that
\[\displaystyle \tilde{B}(\f) \leq \sum_{l=1}^{N} \tilde{B}(c_l \1_{[a_l;b_l]}) = \sum_{l=1}^{N} |c_l|\tilde{B}(\1_{[a_l;b_l]}).\]
It remains to compute $\E(\tilde{B}(\1_{[a;b]}))$ for some interval $[a;b]$.
\begin{align*}
\tilde{B}(\1_{[a;b]}) & = \sup_{t \in \R} \left|\sum_{i=1}^{n} \big[\1_{[a;b]}(t-U_i) - \E_{\pi}(\1_{[a;b]}(t-U))\big]\right| \\
& \leq \sup_{B_t, t \in \R} \left|\sum_{i=1}^{n} \big[\1_{B_t}(U_i) - \E_{\pi}(\1_{B_t}(U))\big]\right|,
\end{align*}
where for any $t \in \R$, $B_t = [t-b;t-a]$. \\
We set $\mathcal{B}=\{B_t, t \in \R\}$ and for every integer $n$, $\displaystyle m_n(\mathcal{B}) = \sup_{A \subset \R, |A|=n} |\{A \cap B_t, t \in \R\}|$. It is easy to see that \[m_n(\mathcal{B}) \leq 1 + \frac{n(n+1)}{2}\] and so, the VC-dimension of $\mathcal{B}$ defined by $sup \{n \geq 0, m_n(\mathcal{B}) = 2^n\}$ is bounded by 2 (see Definition 6.2 of \cite{Mas}). \\
By applying Lemma 6.4 of \cite{Mas}, we obtain:
\[\sqrt{n}\E(\tilde{B}(\1_{[a;b]})) \leq \frac{K}{2}\sqrt{2},\]
where $K$ is an absolute constant. So, for any $\lambda$ in $\Gamma$,
\[\E(\tilde{B}(\f)) \leq \frac{K}{\sqrt{n}}.\]
But, we want an upper bound of $\E(\tilde{B}^2(\f))$. For this, we use Theorem 11 of \cite{BBLM}:
\[\big[\E(\tilde{B}^2(\f))\big]^{1/2} \leq K \, \bigg\{\E(\tilde{B}(\f))+\|\mathcal{M}\|_2\bigg\},\]
where \[\mathcal{M} = \max_{1 \leq i \leq n} \sup_{t \in \R} \big|\f(t-U_i) - \E_{\pi}(\f(t-U))\big|.\]
Hence, \[\|\mathcal{M}\|_2^2 \leq 4 \|\f\|_{\infty}^2 \leq K \ 2^{j},\] with $K$ a constant only depending on $\|\psi\|_{\infty}$.

Finally,
\begin{align}
\E(B^2(\f)) & \leq K \ \left\{\E(\tilde{B}^2(\f)) + \frac{2^{-j}}{T^2}\right\} \nonumber\\
& \leq K \ \left\{\big[\E(\tilde{B}(\f))\big]^2 + 2^{j} + \frac{2^{-j}}{T^2}\right\} \nonumber\\
& \leq K \ \left\{\frac{1}{n} + 2^{j} + \frac{2^{-j}}{T^2}\right\},  \label{completion_B}
\end{align}
with $K$ a constant only depending on $\|\psi\|_1$ and $\|\psi\|_{\infty}$.

Then combining (\ref{completion_V}) and (\ref{completion_B}) yields
\[\E(\eta_{\lambda}^{2})  \leq K \ \Bigg\{\frac{j_0}{n} + \frac{j_0^{3/2}2^{j_0/2}}{n^{3/2}} + \frac{j_0^2 2^{j_0}}{n^2} + \frac{j_0^4 2^{j_0}}{n^2} + \frac{j_0}{T}  + \frac{j_0^2}{T} + \frac{j_0^{3/2}2^{j_0/2}}{nT^{1/2}} + \frac{j_0 n}{T^2}  \Bigg\},\]
where $K$ is a constant depending on $\gamma$, $\|h\|_1$, $\|h\|_{\infty}$, $\|\psi\|_1$, $\|\psi\|_2$ and $\|\psi\|_{\infty}$,
which concludes the proof of \ref{thm3} by setting
\[F(j_0,n,T)= \frac{j_0}{n} + \frac{j_0^{3/2}2^{j_0/2}}{n^{3/2}} + \frac{j_0^4 2^{j_0}}{n^2} + \frac{j_0^2}{T} + \frac{j_0^{3/2}2^{j_0/2}}{nT^{1/2}} + \frac{j_0 n}{T^2}.\]

\end{document}